\newcommand{\supast}{{${^\ast}$}}
\newcommand{\ulb}{{\textup{(}}}
\newcommand{\urb}{{\textup{)}}}
\newcommand{\T}{{\mathbb{T}}}
\newcommand{\C}{{\mathbb{C}}}
\newcommand{\N}{{\mathbb{N}}}
\newcommand{\Z}{{\mathbb{Z}}}
\newcommand{\rest}[1]{{\restriction_{#1}}}
\newcommand{\Ker}{{\textup{Ker\,}}}
\newcommand{\supp}{{\textup{supp\,}}}
\newcommand{\Cstar}{{$C^\ast$}}
\newcommand{\Zconv}[1]{{#1}\in\Z}
\newcommand{\sumvar}[1]{\sum_{\Zconv{#1}}}
\newcommand{\sumk}{\sumvar{k}}
\newcommand{\suml}{\sumvar{l}}
\newcommand{\sumn}{\sumvar{n}}
\newcommand{\bigcupvar}[1]{\bigcup_{\Zconv{#1}}}
\newcommand{\bigcupn}{\bigcupvar{n}}
\newcommand{\rep}{{\pi}}
\newcommand{\ai}{algebraically irreducible }
\newcommand{\ains}{algebraically irreducible}
\newcommand{\Ai}{Algebraically irreducible }
\newcommand{\alg}{A}
\newcommand{\na}{A}
\newcommand{\ba}{A}
\newcommand{\vs}{E}
\newcommand{\ns}{E}
\newcommand{\bs}{E}
\newcommand{\hs}{H}
\newcommand{\linear}{\mathcal L}
\newcommand{\bounded}{\mathcal B}
\DeclareFontFamily{OT1}{pzc}{}
\DeclareFontShape{OT1}{pzc}{m}{it}{<-> s * [1.15] pzcmi7t}{}
\DeclareMathAlphabet{\mathscr}{OT1}{pzc}{m}{it}
\newcommand{\hull}{{\mathscr h}}
\newcommand{\kernel}{{\mathscr k}}
\newcommand{\Kernel}{{\mathscr K\,}}
\newcommand{\topspace}{{X}}
\newcommand{\pt}{x}
\newcommand{\homeo}{{\sigma}}
\newcommand{\dynsys}{(\topspace, \homeo)}
\newcommand{\dynsysshort}{{\Sigma}}
\newcommand{\per}{{\textup{Per}}}
\newcommand{\aper}{{\textup{Aper}}}
\newcommand{\aperpoints}{{\aper(\homeo)}}
\newcommand{\perpoints}{{\per(\homeo)}}
\newcommand{\perpointsvariable}[1]{{{\per}_{#1}(\homeo)}}
\newcommand{\pperpoints}{\perpointsvariable{p}}
\newcommand{\orb}{\mathfrak{o}}
\newcommand{\corb}{\overline{\orb}}
\newcommand{\orbset}{\mathfrak{O}}
\newcommand{\corbset}{\overline{\orbset}}
\newcommand{\corbsetperpoints}{\corbset_{\perpoints}}
\newcommand{\corbsetaperpoints}{\corbset_{\aperpoints}}
\newcommand{\orbspace}[1]{#1/\Z}
\newcommand{\prim}{P}
\newcommand{\sspace}{{\mathit \Pi}_\lone}
\newcommand{\sspaceset}[1]{\sspace\left({#1}\right)}
\newcommand{\sspaceinfdim}{\sspaceset{\infty}}
\newcommand{\cspace}[1]{C({#1})}
\newcommand{\coeffalg}{\cspace{\topspace}}
\newcommand{\inducedset}{\topspace_\rep}
\newcommand{\inducedcoeffalg}{\cspace{\inducedset}}
\newcommand{\inducedhomeo}{\homeo_{\inducedset}}
\newcommand{\inducedsystem}{(\inducedset,\inducedhomeo)}
\newcommand{\inducedsystemshort}{\Sigma_\rep}
\newcommand{\induceddelta}{\delta_{\inducedset}}
\newcommand{\inducedlone}{\ell^1(\inducedsystemshort)}
\newcommand{\inducedrest}{\mathcal R_{\inducedset}}
\newcommand{\inducedquotientmap}{\mathcal Q_{\inducedset}}
\newcommand{\lone}{{\ell^1(\dynsysshort)}}
\newcommand{\loneZ}{{\ell^1(\Z)}}
\newcommand{\loneS}{{\ell^1(\dynsysshort_S)}}
\newcommand{\cstar}{{C^\ast(\dynsysshort)}}
\newcommand{\loneelement}[1]{{\sumvar{#1} f_{#1}\delta^{#1}}}
\theoremstyle{plain}
\newtheorem{theorem}{Theorem}[section]
\newtheorem{proposition}[theorem]{Proposition}
\newtheorem{lemma}[theorem]{Lemma}
\newtheorem{corollary}[theorem]{Corollary}
\theoremstyle{definition}
\newtheorem{example}[theorem]{Example}
\newtheorem{remark}[theorem]{Remark}
\numberwithin{equation}{section}
\begin{document}


\title [Algebraically irreducible representations and structure space]{Algebraically irreducible representations and structure space of the Banach algebra associated with a topological dynamical system}

\author{Marcel de Jeu}
\address{Marcel de Jeu, Mathematical Institute, Leiden University, P.O.\ Box 9512, 2300 RA Leiden, the Netherlands}
\email{mdejeu@math.leidenuniv.nl}

\author{Jun Tomiyama}
\address{Jun Tomiyama, Department of Mathematics, Tokyo Metropolitan University, Minami-Osawa, Hachioji City, Japan}
\email{juntomi@med.email.ne.jp}

\subjclass[2010]{Primary 46H15; Secondary 46H10, 47L65, 54H20}

\keywords{Banach algebra, algebraically irreducible representation, primitive ideal, structure space, crossed product, topological dynamical system}


\begin{abstract}
If $\topspace$ is a compact Hausdorff space and $\homeo$ is a homeomorphism of $\topspace$, then a Banach algebra $\lone$ of crossed product type is naturally associated with this topological dynamical system $\dynsysshort=(\topspace,\homeo)$. If $\topspace$ consists of one point, then $\lone$ is the group algebra of the integers.

We study the \ai representations of $\lone$ on complex vector spaces, its primitive ideals, and its structure space. The finite dimensional \ai representations are determined up to algebraic equivalence, and a sufficiently rich family of infinite dimensional \ai representations is constructed to be able to conclude that $\lone$ is semisimple. All primitive ideals of $\lone$ are selfadjoint, and $\lone$ is Hermitian if there are only periodic points in $\topspace$. If $\topspace$ is metrizable or all points are periodic, then all primitive ideals arise as in our construction. A part of the structure space of $\lone$ is conditionally shown to be homeomorphic to the product of a space of finite orbits and $\T$.  If $\topspace$ is a finite set, then the structure space is the topological disjoint union of a number of tori, one for each orbit in $\topspace$. If all points of $\topspace$ have the same finite period, then it is the product of the orbit space $\orbspace{\topspace}$ and $\T$. For rational rotations of $\T$, this implies that the structure space is homeomorphic to $\T^2$.
\end{abstract}

\maketitle

\section {Introduction and overview}\label{sec:intro}

If $\topspace$ is a compact Hausdorff space and $\sigma$ is a homeomorphism of $\topspace$, then there is a Banach algebra $\lone$ of crossed product type associated with the dynamical system $\dynsysshort=(\topspace,\homeo)$. It is an involutive algebra, and there is a significant amount of literature on the relation between the properties of the enveloping \Cstar-algebra $\cstar$ of $\lone$ and those of the dynamical system. The algebra $\lone$ itself, however, is far less well studied, even though it is arguably more naturally associated with $\dynsysshort$ than $\cstar$, the construction of which takes one extra step. The investigation of $\lone$, which is an algebra with a more complicated structure than $\cstar$, has been taken up in \cite{DST} and has been continued in \cite{JTStudia} and \cite{JTBJMA}.

The present paper is a further step in the study of $\lone$. It fits into what seems to be an emerging  line of research where Banach algebras of crossed product (or related) type are considered that are associated with (abstract) dynamical systems, but that are not \Cstar-algebras or closed subalgebras of \Cstar-algebras. We refer to \cite{DK2008, DK2009, DK2011, DdJW, dJMW, dJM, Phillips2012, Phillips2013a, Phillips2013b} as examples of this development. These new algebras present an extra challenge compared to \Cstar-algebras, because the latter with their rigidity properties are still reasonably manageable, and have a relatively uncomplicated\textemdash though still far from trivial\textemdash structure. As an example, the algebra $\ell^1(\Z)$\textemdash our algebra $\lone$ reduces to this algebra when $\topspace$ consists of one point\textemdash has closed non-selfadjoint ideals, whereas this is of course no longer true for its enveloping \Cstar-algebra $C(\T)$.

In this paper, we concentrate on the \ai representations of $\lone$ on complex vector spaces, the primitive ideals, and the structure space of $\lone$. Hence there is no topology on the representation space involved, although the fact that a topology can always be brought into play (see Theorem~\ref{thm:material_on_algebraically_irreducible_representations}) will play an important part in the proofs. This is contrary to what sometimes seems to have become the main objective (in particular for involutive Banach algebras), namely to study topologically irreducible (\supast-)representations. In many basic papers, including \cite{PT}, this is done without further comment, and also the present authors have used this definition of an irreducible representation in \cite{JTBJMA}. The fact that for $\cstar$-algebras there is no difference (see Theorem~\ref{thm:cstar_reps}) will have encouraged this tendency. In the present paper, however, we return to the purely algebraic viewpoint. This gives the correct definition of a primitive ideal that enables one to introduce the hull-kernel topology on the set of primitive ideals.

The reader who is familiar with the formulas used to define representations of $\lone$ in \cite{DST, JTStudia, JTBJMA} will notice a clear similarity with the formulas in the present paper. Although these formulas have certainly been an inspiration, the similarity does not go much further than that, because in a purely algebraic context we need other techniques than in the previous papers. For example, it is not so difficult to determine the finite dimensional \ai \supast-representations of $\lone$, since there is a theory of states available in this Hilbert space context, but to show that these actually exhaust the finite dimensional \ai representations on complex vector spaces up to equivalence is another matter. Likewise, with every aperiodic point we shall associate a representation of $\lone$ on $\ell^p(\Z)$  for every $p\in[1,\infty]$. It is easy to show that this representation is topologically irreducible if $p\in(0,\infty)$ and not topologically irreducible if $p=\infty$ (see Proposition~\ref{prop:topological_irreducibility}), but proving that it is algebraically irreducible if $p=1$ is more demanding (see Proposition~\ref{prop:one_algebraically_irreducible}).

This paper is organised as follows.

In Section~\ref{sec:preliminaries}, we establish notation, collect non-trivial key material on \ai representations of Banach algebras, introduce our Banach algebra $\lone$, and establish basic results on dynamical systems that are induced by non-zero homomorphisms from $\lone$ into a normed algebra.

Section~\ref{sec:representations} contains the description of all finite dimensional \ai representations of $\lone$ up to algebraic equivalence (see Theorem~\ref{thm:description_finite_dimensional_representations}). For $p\in[1,\infty]$, the representations of $\lone$ on $\ell^p(\Z)$ associated with aperiodic points are introduced and investigated from the viewpoint of equivalence (see Propositions~\ref{prop:equivalences} and~\ref{prop:top_equivalences}) and algebraic and topological irreducibility (see Theorem~\ref{thm:irreducibility_infinite_dimensional}). With the primary algebraic goal of this paper in mind, we could have restricted ourselves to the algebraically irreducible representations on $\ell^1(\Z)$ and their algebraic equivalences, but it seemed less than satisfactory not to present the complete picture.

Section~\ref{sec:primitive_ideals} combines the \ai representations from Section~\ref{sec:representations} with the technique of induced dynamical systems from Section~\ref{sec:preliminaries}. It is shown that, even though we do not generally know them all, every primitive ideal of $\lone$ is selfadjoint (see Theorem~\ref{thm:primitive_ideals_are_selfadjoint}). Furthermore, $\lone$ is a Hermitian Banach algebra if all points of $\topspace$ are periodic (see Theorem~\ref{thm:Hermitian}). If $\topspace$ is metrizable, then, even though we do not generally know all infinite dimensional \ai representations, we can still show that all primitive ideals can be obtained from Section~\ref{sec:preliminaries} (see Theorem~\ref{thm:exhaustive}). This section also contains\textemdash partly as a prelude to Section~\ref{sec:structure_space}\textemdash a more detailed investigation of the primitive ideals originating from Section~\ref{sec:representations}. There are already enough of these to conclude that $\lone$ is semisimple (see Theorem~\ref{thm:semisimple}).

The final Section~\ref{sec:structure_space} concentrates on the structure space of $\lone$, i.e.\  on the set of primitive ideals of $\lone$ in the hull-kernel topology. Several results of Section~\ref{sec:primitive_ideals} can be interpreted in this context (see Theorem~\ref{thm:basic_topology}). The main goal of this section is the description\textemdash under conditions\textemdash of parts of this structure space as topological products of spaces of finite orbits and $\T$ (see Theorem~\ref{thm:main_homeomorphism}). If $\topspace$ is a finite set, then the structure space is the topological disjoint union of a number of tori, one for each orbit in $\topspace$. If all points of $\topspace$ are periodic with the same period, then it is homeomorphic to the product of the orbit space $\orbspace{\topspace}$ and $\T$. For rational rotations of $\T$, this implies that the structure space is homeomorphic to $\T^2$. The methods in Section~\ref{sec:structure_space} could conceivably be adapted to yield similar results for the structure space of $\cstar$, but further research is needed to explore this perspective.

\section{Preliminaries}\label{sec:preliminaries}

This section contains the necessary preliminary definitions and results.

We start by introducing some conventions and terminology for representations. The latter, which we give in detail since there are various different terminologies in use, is consistent with that in \cite{Pbook}, with the exception that\textemdash to prevent any misunderstanding\textemdash we write `\ains' where \cite{Pbook} uses `irreducible'.

All vector spaces in this paper are complex. Algebras are not necessarily unital. Ideals of an algebra are two-sided; ideals of normed algebras are not necessarily closed. A \emph{representation} of an algebra $\alg$ on a vector space $\vs$ is a not necessarily unital homomorphism $\rep:A\to\linear(\vs)$ into the linear operators $\linear(\vs)$ on $E$. It is \emph{\ai}if $\rep(\ba)(\vs)\neq\{0\}$ and the only invariant subspaces of $\vs$ are $\{0\}$ and $\vs$. An \ai representation of a unital algebra is necessarily unital. If $\ns$ is a normed space, then a \emph{normed representation} of an algebra $\alg$ on $\ns$ is a representation $\rep$ of $\na$ on $\ns$ such that $\pi(\na)\subset\bounded(\ns)$, where $\bounded(\ns)$ denotes the bounded operators on $\ns$. A normed representation is \emph{topologically irreducible} if $\rep(\na)(\ns)\neq\{0\}$ and the only closed invariant subspaces of $\vs$ are $\{0\}$ and $\vs$.  A topologically irreducible normed representation of a unital algebra is necessarily unital. A normed representation of a normed algebra $\na$ on a normed space $\ns$ is \emph{continuous} if $\rep:\na\to\bounded(\ns)$ is continuous, and it is \emph{contractive} if $\rep:\na\to\bounded(\ns)$ is contractive. The notions \emph{\supast-representation} and \emph{algebraic equivalence}, \emph{topological equivalence}, \emph{isometric equivalence}, and \emph{unitary equivalence} of representations are self-explanatory. A \supast-representation of a Banach algebra with isometric involution on a Hilbert space is automatically contractive.

Next, we collect some non-trivial facts on \ai representations of Banach algebras.

\begin{theorem}\label{thm:material_on_algebraically_irreducible_representations} Let $\ba$ be a Banach algebra and let $\vs$ be a vector space. Suppose that $\rep:\ba\to\linear(\vs)$ is an \ai representation. Then:
\begin{enumerate}
\item The algebra of intertwining operators on $\vs$ consists of the complex multiples of the identity.
\item $\vs$ has a unique Banach space topology relative to which $\rep$ is normed. There exists a norm inducing this topology such that $\rep$ is a contractive representation.
\item An algebraic equivalence between two \ai normed representations of $\ba$ on Banach spaces is a topological equivalence.
\end{enumerate}
\end{theorem}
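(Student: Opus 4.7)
My plan is to realise $\vs$ canonically as a quotient of $\ba$ by a closed maximal modular left ideal and to deduce all three statements from that realisation. Fix any $\xi_0 \in \vs \setminus \{0\}$ and consider the orbit map $T_{\xi_0} \colon \ba \to \vs$, $a \mapsto \rep(a)\xi_0$. The subspace $\{\xi \in \vs : \rep(\ba)\xi = \{0\}\}$ is invariant and, by the non-degeneracy condition $\rep(\ba)\vs \neq \{0\}$ inherent in \ains, proper, hence trivial; so $\rep(\ba)\xi_0 \neq \{0\}$, and by invariance and irreducibility $\rep(\ba)\xi_0 = \vs$. Thus $T_{\xi_0}$ is surjective. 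Its kernel $L = \{a \in \ba : \rep(a)\xi_0 = 0\}$ is a left ideal; surjectivity furnishes $u \in \ba$ with $\rep(u)\xi_0 = \xi_0$, whence $a - au \in L$ for every $a \in \ba$ and $L$ is modular. The irreducibility of the left $\ba$-module $\vs$ translates into maximality of $L$ among left ideals.

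For the existence statement in part (2), I would invoke the classical theorem that maximal modular left ideals of a Banach algebra are automatically closed; then $\ba/L$ carries its quotient Banach-space topology, which transports through the $\ba$-module isomorphism $\ba/L \cong \vs$ to a Banach-space topology on $\vs$ under which $\rep(a)$ acts as left multiplication by $a$, so that $\|\rep(a)\| \le \|a\|$ and $\rep$ is contractive. For uniqueness, suppose $\|\cdot\|'$ is any norm making $\vs$ a Banach space on which $\rep$ is normed. By the classical automatic continuity result that every \ai normed representation of a Banach algebra is continuous as a map into $\bounded(\vs)$ (a consequence of Johnson's theorem, since $\bounded(\vs)$ is semisimple), the orbit map $T_{\xi_0}$ is a bounded linear surjection $\ba \to (\vs,\|\cdot\|')$ with kernel $L$; the open mapping theorem then identifies $(\vs,\|\cdot\|')$ topologically with $\ba/L$, and the two Banach-space topologies on $\vs$ coincide.

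For part (1), given an intertwiner $S$ on $\vs$, write $S\xi_0 = \rep(b)\xi_0$ for some $b \in \ba$ (using surjectivity of $T_{\xi_0}$) and compute $S(\rep(a)\xi_0) = \rep(a)S\xi_0 = \rep(ab)\xi_0$; transported to $\ba/L$ via $\phi \colon a+L \mapsto \rep(a)\xi_0$, $S$ is right multiplication by $b$, which is bounded. Hence every intertwiner lies in $\bounded(\vs)$, and the intertwiner algebra is a closed unital subalgebra of $\bounded(\vs)$. The algebraic Schur argument (the kernel and image of any intertwiner are invariant, hence trivial or all of $\vs$) makes it a complex normed division algebra, and Gelfand--Mazur collapses it to $\C \cdot I$. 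For part (3), given an algebraic equivalence $S \colon \vs_1 \to \vs_2$ and $\xi_0 \in \vs_1 \setminus \{0\}$, set $\eta_0 = S\xi_0$; the intertwining property combined with injectivity of $S$ gives $\{a : \rep_1(a)\xi_0 = 0\} = \{a : \rep_2(a)\eta_0 = 0\} =: L$, so part (2) supplies topological isomorphisms $\phi_i \colon \ba/L \to \vs_i$, and from $S\phi_1 = \phi_2$ we obtain $S = \phi_2 \circ \phi_1^{-1}$, a topological equivalence.

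The main obstacles are the two non-trivial inputs from Banach-algebra theory: the automatic closedness of maximal modular left ideals, which underlies the very existence of a Banach-space topology on $\vs$ in part (2), and the automatic continuity of \ai normed representations of a Banach algebra, on which the uniqueness in part (2) and through it part (3) depends. Once these are granted, the remaining ingredients\textemdash algebraic Schur, Gelfand--Mazur, and the open mapping theorem\textemdash are standard.
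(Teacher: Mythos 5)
Your proof is correct and is essentially the paper's own route: the paper merely cites the standard results (Bonsall--Duncan \S 25 and Palmer, Corollary~4.2.16), and the proofs behind those citations are exactly your realisation of $\vs$ as $\ba/L$ for a closed maximal modular left ideal $L$, combined with automatic continuity of algebraically irreducible normed representations, the open mapping theorem, the algebraic Schur argument, and Gelfand--Mazur. One minor caveat: the automatic continuity you need is not a direct corollary of Johnson's uniqueness-of-norm theorem applied to the semisimple algebra $\bounded(\vs)$, but a separate automatic-continuity theorem (also due to Johnson); the statement you invoke is nonetheless correct and is precisely what the cited references supply.
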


Part (1) follows from \cite[Corollary~25.3.(i) and Theorem~14.2]{BD}, where we use the fact that we are working over $\C$.
The first part of (2) is \cite[Corollary~4.2.16.(a)]{Pbook}, and the second part follows from an inspection of the proof of \cite[Lemma~25.2]{BD}, or as a special case of \cite[Theorem~4.2.7]{Pbook}.
Part (3) is \cite[Corollary~4.2.16.(b)]{Pbook}.

 The fact that every \ai representation of a Banach algebra can be viewed as a continuous (even contractive) representation, as asserted in part (2), will be used repeatedly. This possibility is a consequence of the fact that maximal modular left ideals of a Banach algebra are closed.

The next result may well underlie the fact that in parts of the literature the `irreducibility' of a normed representation stands for what is `topological irreducibility' in our terminology. The `if' part is Kadison's result (see \cite{K} or \cite[Corollary~2.8.4]{D}); the `only if' part follows from \cite[Corollary~2.9.6.(i)]{D}.

\begin{theorem}\label{thm:cstar_reps}
 Let $\ba$ be a \Cstar-algebra and let $\rep$ be a representation of $\ba$ on a vector space $E$. Then $\pi$ is \ai if and only if $\vs$\ can be supplied with the structure of a Hilbert space such that $\rep$ is a topologically irreducible \supast-representation of $\ba$ on $\vs$.
\end{theorem}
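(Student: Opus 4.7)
The plan is to treat the two directions separately, routing the ``only if'' direction through the GNS construction and purity of states.

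For the ``if'' direction, suppose $\vs$ carries a Hilbert space structure relative to which $\rep$ is a topologically irreducible \supast-representation of $\ba$. I would invoke Kadison's transitivity theorem: for every nonzero $\xi\in\vs$ and every $\eta\in\vs$ there exists $a\in\ba$ with $\rep(a)\xi=\eta$. Consequently every nonzero $\rep(\ba)$-invariant linear subspace of $\vs$ equals $\vs$, and $\rep$ is \ains.

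For the ``only if'' direction, assume $\rep$ is \ains. First I would use Theorem~\ref{thm:material_on_algebraically_irreducible_representations}(2) to equip $\vs$ with a Banach space norm for which $\rep$ is contractive. Fix $\xi\in\vs$ with $\xi\neq 0$; by irreducibility $\rep(\ba)\xi=\vs$, and boundedness of the map $a\mapsto\rep(a)\xi$ shows that $L:=\{a\in\ba:\rep(a)\xi=0\}$ is a closed modular left ideal of $\ba$ (modular, since there is $u\in\ba$ with $\rep(u)\xi=\xi$, making $u$ a right identity modulo $L$). Because $\ba/L$ is algebraically isomorphic to the simple $\ba$-module $\vs$, $L$ is maximal among proper left ideals.

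Next I would invoke the standard $C^\ast$-algebraic fact that every maximal modular left ideal equals $L_\phi:=\{a\in\ba:\phi(a^\ast a)=0\}$ for some pure state $\phi$ of $\ba$. With such $\phi$ at hand, form the GNS triple $(\rep_\phi,H_\phi,\xi_\phi)$; purity of $\phi$ yields topological irreducibility of $\rep_\phi$. Kadison's transitivity theorem, applied to $\rep_\phi$, then gives $\rep_\phi(\ba)\xi_\phi=H_\phi$ rather than merely a dense subspace. The prescription $\rep_\phi(a)\xi_\phi\mapsto\rep(a)\xi$ is thereby a well-defined linear bijection $H_\phi\to\vs$ intertwining $\rep_\phi$ and $\rep$; transporting the Hilbert space structure along it turns $\vs$ into a Hilbert space on which $\rep$ is a topologically irreducible \supast-representation.

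The main obstacle is the passage through the pure-state/GNS picture: one needs both the classification of maximal modular left ideals of a $C^\ast$-algebra via pure states and Kadison's transitivity in the strong form $\rep_\phi(\ba)\xi_\phi=H_\phi$. Without the latter, the bijection above would land only in a dense subspace of $H_\phi$, and $\vs$ itself would fail to inherit a complete Hilbert space structure.
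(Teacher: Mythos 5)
Your proof is correct and takes essentially the same route as the paper, which disposes of the theorem by citing Kadison's transitivity theorem for the \emph{if} direction and Dixmier's Corollary~2.9.6.(i) for the \emph{only if} direction: your argument is exactly the standard proof underlying those citations (transitivity to upgrade topological to algebraic irreducibility, and the identification of the annihilator ideal $L=\{a:\rep(a)\xi=0\}$ with $N_\phi$ for a pure state $\phi$, followed by a GNS comparison). I see no gaps.
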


We now turn to the dynamical system and its associated Banach algebra.

Throughout this paper, $\topspace$ is a non-empty compact Hausdorff space and $\homeo:\topspace\to\topspace$ is a homeomorphism. Hence $\Z$ acts on $\topspace$, and we write $\dynsysshort=\dynsys$ for this topological dynamical system. We let $\aperpoints$ and $\perpoints$ denote the aperiodic and the periodic points of $\homeo$, respectively. We say that $\dynsys$ is \emph{topologically free} if $\aperpoints$ is dense in $\topspace$, and that it is \emph{free} if the $\Z$-action is free, i.e.\  if $\perpoints=\emptyset$. It is \emph{topologically transitive} if $\bigcupn \homeo^n(V)$ is dense in $\topspace$ for every non-empty open subset $V$ of $\topspace$. For every integer $p\geq 1$, let $\pperpoints$ be the set of points with an orbit of $p$ elements. A subset $S$ of $\topspace$ is
\emph{invariant} if it is invariant under the $\Z$-action, i.e.\  if $\homeo(S)=S$. If $S$ is invariant, then so are its closure and interior. The sets $\aperpoints$, $\perpoints$ are invariant, as are the sets $\pperpoints$ for every integer $p\geq 1$. We shall write $\orb$ for a general finite or infinite orbit, and $\corb$ for the closure of an orbit.

The involutive algebra of continuous (complex-valued) functions on $\topspace$ is denoted by $\coeffalg$, and we write $\alpha$ for the involutive automorphism of $\coeffalg$ induced by $\homeo$, defined by $\alpha(f) = f \circ \homeo^{-1}$ for $f \in\coeffalg$. Via $n \mapsto \alpha^n$, $\Z$ acts on $\coeffalg$.

With $\Vert\cdot\Vert$ denoting the supremum norm on $\coeffalg$, we let
$$
\lone = \ell^1(\Z,\coeffalg)=\left\{a: \mathbb{Z} \to\coeffalg : \Vert a \Vert := \sumn \Vert a(n)\Vert < \infty\right\}.
$$
We supply $\lone$ with the usual twisted convolution as multiplication, defined by
$$
(aa^\prime) (n) = \sumk a(k) \cdot \alpha^k (a^\prime(n-k))
$$
for $n\in\Z$ and $a, a^\prime \in \lone$, and define an involution on $\lone$ by
$$
a^* (n) = \overline{\alpha^n (a(-n))}
$$
for $n\in\Z$ and $a\in\lone$. Thus $\lone$ becomes a unital Banach $\sp\ast$-algebra with isometric involution, and we call $\lone$ \emph{the Banach algebra associated with $\dynsysshort$}. If $\topspace$ consists of one point, then $\lone$ is the group algebra $\loneZ$ of the integers.

A convenient way to work with $\lone$ is provided by the following. For $n,m \in \mathbb{Z}$, let
\begin{equation*}
 \chi_{\{n\}} (m) =
 \begin{cases}
   1 &\text{if }m =n;\\
   0 &\text{if }m \neq n,
 \end{cases}
\end{equation*}
where the constants denote the corresponding constant functions in $\coeffalg$. Then $\chi_{\{0\}}$ is the identity element of $\lone$. Let $\delta = \chi_{\{1\}}$; then $\chi_{\{-1\}}=\delta^{-1}=\delta^*$. If we put $\delta^0=\chi_{\{0\}}$, then $\delta^n = \chi_{\{n\}}$ for all $n \in \mathbb{Z}$. We may view $\coeffalg$ as a closed abelian \supast-subalgebra of $\lone$, namely as $\{a_0 \delta^0 \, : \, a_0 \in \coeffalg\}$. If $a \in \lone$, and if we write $f_n$ for  $a(n)$ as a more intuitive notation, then $a= \loneelement{n}$ and $\Vert a \Vert=\sumn \Vert f_n\Vert<\infty$.
In the rest of this paper, we shall constantly use this series representation $a= \loneelement{n}$ of an arbitrary element $a\in\lone$, with uniquely determined $f_n\in\coeffalg$ for $n\in\Z$. Thus $\lone$ is generated, as a unital Banach algebra, by an isometrically isomorphic copy of $\coeffalg$ and the elements $\delta$ and $\delta^{-1}$, subject to the relation $\delta f \delta^{-1}=\alpha (f)= f\circ\homeo^{-1}$ for $f \in\coeffalg$. The isometric involution is determined by $f^*=\overline f$ for $f\in\coeffalg$ and by $\delta^*=\delta^{-1}$.

\medskip

We continue our preparations by including some material on Banach algebras associated with non-empty invariant closed subsets and\textemdash this is the actual purpose\textemdash with homomorphisms.

For a non-empty closed invariant subset $S$ of $\topspace$, let
\[
\Kernel(S) = \left\{\sumn f_n \delta^n \in\lone : f_n\rest{S}=0\textup{ for all }n\in\Z\right\}
\]
be the closed ideal of $\ell^1(\dynsysshort)$ that is generated by $\{f\in\coeffalg : f\rest{S}=0\}$. It is proper and selfadjoint.
Since $S$ is invariant, $\dynsysshort_S:=(S,\homeo\rest{S})$ is a topological dynamical system in its own right; hence there is an associated Banach algebra $\loneS$. Elements of this algebra can be written as absolutely convergent series $\sumn g_n\delta_S^n$, where $g_n\in C(S)$  for $n\in\Z$.

\begin{lemma}\label{lem:restricted_system}
Let $S\subset \topspace$ be a non-empty invariant closed subset, and define $\mathcal R:\lone\to\loneS$ by
\[
\mathcal{R_S}\left(\sumn f_n\delta^n\right)=\sumn f_n\rest{S}\,\delta_S^n
\]
for $\sumn f_n\delta^n\in\lone$.
Then:
\begin{enumerate}
\item $\mathcal R_S$ is a surjective unital contractive \supast-homomorphism;
\item $\Ker(\mathcal R_S)=\Kernel(S)$ is a proper closed selfadjoint ideal of $\ell^1(\dynsysshort)$;
\item If $\mathcal Q_S:\lone\to\lone/\Kernel(S)$ denotes the unital quotient \supast-homomorphism, then the induced map $\mathcal R_S^\prime: \lone/\Kernel(S)\to\loneS$ such that $\mathcal R_S^\prime\circ\mathcal Q_S=\mathcal R_S$ is an isometric \supast-isomorphism.
\end{enumerate}
\end{lemma}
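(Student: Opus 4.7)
The plan is to verify (1) and (2) directly from the definitions and then deduce (3) from them by combining the first isomorphism theorem with a norm-preserving application of Tietze's extension theorem.

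First I would handle (1). Well-definedness and contractivity come for free: for $a=\sumn f_n\delta^n\in\lone$, the obvious bound $\Vert f_n\rest{S}\Vert\leq\Vert f_n\Vert$ in the supremum norm yields $\sumn\Vert f_n\rest{S}\Vert\leq\Vert a\Vert$, so the series defining $\mathcal R_S(a)\in\loneS$ converges absolutely and $\Vert\mathcal R_S(a)\Vert\leq\Vert a\Vert$. The image of the identity $\chi_{\{0\}}$ is visibly the identity of $\loneS$. The multiplicative and involutive properties hinge on a single observation: since $\homeo(S)=S$, the restriction map intertwines $\alpha$ with the analogous automorphism on $C(S)$, because for $f\in\coeffalg$ and $x\in S$ one has $\alpha^k(f)(x)=f(\homeo^{-k}(x))$ with $\homeo^{-k}(x)\in S$. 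Substituting this into the twisted convolution formula and the formula for $a^*$ shows that $\mathcal R_S$ respects both operations.

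Surjectivity is where Tietze enters. Given $b=\sumn g_n\delta_S^n\in\loneS$, I would invoke the norm-preserving version of the Tietze extension theorem to lift each $g_n\in C(S)$ to a $\tilde g_n\in\coeffalg$ with $\Vert\tilde g_n\Vert=\Vert g_n\Vert$. Then $\sumn\Vert\tilde g_n\Vert=\Vert b\Vert<\infty$, so $a:=\sumn\tilde g_n\delta^n\in\lone$ is a genuine element of $\lone$ with $\mathcal R_S(a)=b$ and, crucially, $\Vert a\Vert=\Vert b\Vert$. This construction will also pay off in (3).

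For (2), uniqueness of the representation $a=\sumn f_n\delta^n$ forces $\Ker(\mathcal R_S)$ to be exactly the set on the right-hand side of the definition of $\Kernel(S)$. That set is closed (coefficientwise convergence is dominated by the $\lone$-norm), selfadjoint (invariance of $S$ applied to the formula for $a^*$), a two-sided ideal (being the kernel of a homomorphism), and proper (non-emptiness of $S$ forces $\chi_{\{0\}}$ outside it). That it coincides with the closed ideal generated by $\{f\in\coeffalg:f\rest{S}=0\}$ is a small separate verification, one inclusion being trivial and the other following because the set contains those generators and is a closed ideal.

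Finally, (3) is mostly bookkeeping. The first isomorphism theorem turns the surjective \supast-homomorphism $\mathcal R_S$ with kernel $\Kernel(S)$ into a \supast-algebra isomorphism $\mathcal R_S'$; contractivity from (1) passes to the quotient to give $\Vert\mathcal R_S'\Vert\leq 1$. The reverse bound $\Vert\mathcal R_S'^{-1}\Vert\leq 1$ is exactly what the norm-preserving Tietze lift from (1) provides: for each $b\in\loneS$ we exhibit an $a\in\lone$ with $\mathcal R_S(a)=b$ and $\Vert a\Vert=\Vert b\Vert$, whence the quotient norm of $\mathcal R_S'^{-1}(b)$ is at most $\Vert b\Vert$. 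I do not foresee any genuine obstacle; the two points that need a moment of care are the commutation of restriction with $\alpha$ (used pervasively in (1) and (2)) and the fact that Tietze can be arranged to preserve the supremum norm coefficient by coefficient, which is what delivers the isometry in (3).
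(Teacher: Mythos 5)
Your proposal is correct and follows essentially the same route as the paper: both verify the algebraic properties directly from the invariance of $S$ and obtain surjectivity and the isometry in (3) from coefficient-wise Tietze extension with norm control. The only minor difference is that you invoke an exactly norm-preserving complex Tietze extension (which is valid, but for complex-valued functions requires composing an arbitrary extension with the radial retraction onto the disk of radius $\Vert g\Vert$), whereas the paper gets by with the weaker statement that extensions exist with $\Vert f\Vert<\Vert g\Vert+\varepsilon$, deduced from the automatic isometry of the \supast-isomorphism $\coeffalg/\{f:f\rest{S}=0\}\to C(S)$; either version suffices since the quotient norm is an infimum.
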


\begin{proof}
As a consequence of Tietze's extension theorem, the canonical map from $\coeffalg/\{f\in\coeffalg : f\rest{S}=0\}$ to $C(S)$ is a \supast-isomorphism of \Cstar-algebras. Hence it is isometric. Therefore, if $g\in C(S)$ and $\varepsilon>0$ are given, there exists $f\in\coeffalg$ such that $f\rest{S}=g$ and $\Vert f\Vert < \Vert g\Vert + \varepsilon$. This implies that $\mathcal R_S$ is surjective. The rest in (1) is clear, as is (2), and the isometric nature of the induced map $\mathcal R_S^\prime$ in (3) follows again from the above extension property of elements of $C(S)$.
\end{proof}

The above construction can be put to good use when studying homomorphisms, as follows.
Let $\rep:\lone\to A$ be a non-zero not necessarily unital continuous homomorphism from $\lone$ into a not necessarily unital normed algebra $A$. If $\rep(\coeffalg)=\{0\}$, then $\rep(1)=0$, implying that $\rep=0$. Since this is not the case, $\{f\in\coeffalg : \rep(f)=0\}$ is a proper $\alpha$-invariant closed ideal of $\coeffalg$. There exists a unique closed subset $\inducedset$ of $\topspace$ such that, for $f\in\coeffalg$, $\rep(f)=0$ if and only if $f\rest{\inducedset}=0$; we see that $\inducedset$ is non-empty and $\homeo$-invariant.  The dynamical system $\inducedsystemshort:=\inducedsystem$ is called the \emph{dynamical system induced by} $\rep$. The continuity of $\rep$ implies that $\Kernel(\inducedset)\subset\Ker (\rep)$; hence we have a continuous homomorphism $\tilde\rep: \lone/\Kernel(\inducedset)\to A$ such  that $\rep=\inducedquotientmap\circ\tilde \rep$, where $\inducedquotientmap:\lone\to\lone/\Kernel(\inducedset)$ is the quotient map. Then $\Vert \tilde\rep\Vert=\Vert\rep\Vert$. On the other hand, Lemma~\ref{lem:restricted_system} shows that the map $\inducedrest^\prime:\lone/\Kernel(\inducedset)\to\ell^1(\inducedsystemshort)$ such that $\inducedrest^\prime\circ\inducedquotientmap=\inducedrest$ is an isometric \supast-isomorphism. If we let $\rep^\prime=\tilde\rep\circ\left(\inducedrest^\prime\right)^{-1}$, then $\rep^\prime:\ell^1(\inducedsystemshort)\to A$ is continuous, $\Vert\rep^\prime\Vert=\Vert\tilde\rep\Vert=\Vert\rep\Vert$, and $\rep=\rep^\prime\circ\mathcal R$. The following is now clear.

\begin{proposition}\label{prop:induced_system}
Let $\rep:\lone\to A$ be a non-zero not necessarily unital continuous homomorphism from $\lone$ into a not necessarily unital normed algebra $A$. Then $\{f\in\coeffalg : \rep(f)=0\}=\{f\in\coeffalg : f\rest{\inducedset}=0\}$ for a unique closed subset $\inducedset$ of $\topspace$. This $\inducedset$ is non-empty and invariant; hence it yields a dynamical system $\inducedsystemshort=(\inducedset,\homeo\rest{\inducedset})$. Define $\inducedrest:\lone\to\ell^1(\inducedsystemshort)$ by
\[
\inducedrest\left(\sumn f_n\delta^n\right)=\sumn f_n\rest{\inducedset}\,\delta_{\inducedset}^n
\]
for $\sumn f_n\delta^n\in\lone$. Then:
\begin{enumerate}
\item $\inducedrest$ is a surjective unital contractive \supast-homomorphism;
\item There exists a unique map $\rep^\prime:\ell^1(\inducedsystemshort)\to A$ such that $\rep=\rep^\prime\circ\inducedrest$. This $\rep^\prime$ is a continuous homomorphism and $\Vert\rep^\prime\Vert=\Vert\rep\Vert$. If $A$ is unital, then $\rep$ is unital precisely when $\rep^\prime$ is unital. If $A$ is involutive, then $\rep$ is involutive precisely when $\rep^\prime$ is involutive;
\item $\rep^\prime$ is injective on $\inducedcoeffalg$;
\item If $A=\bounded(\ns)$\ for a normed space $\ns$, then $\rep$ is an algebraically \ulb resp.\ topologically\urb\ irreducible representation of $\lone$ on $\ns$ if and only if $\rep^\prime$ is an algebraically \ulb resp.\ topologically\urb\ irreducible representation of $\ell^1(\inducedsystemshort)$ on $\ns$.
\end{enumerate}
\end{proposition}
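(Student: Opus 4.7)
The plan is to harvest the construction already carried out in the paragraph preceding the statement, where the set $\inducedset$, the intermediate map $\tilde\rep$, and the composite $\rep^\prime=\tilde\rep\circ(\inducedrest^\prime)^{-1}$ have been produced. Each of the four conclusions should then follow by a short argument, and most of the work has in fact already been done.

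Part (1) is immediate from Lemma~\ref{lem:restricted_system}(1) applied with $S=\inducedset$, since $\inducedset$ has already been shown to be non-empty, closed and $\homeo$-invariant. For part (2), the existence of $\rep^\prime$ satisfying $\rep=\rep^\prime\circ\inducedrest$ is built into the definition $\rep^\prime=\tilde\rep\circ(\inducedrest^\prime)^{-1}$, uniqueness follows from the surjectivity of $\inducedrest$ supplied by (1), and the norm identity $\Vert\rep^\prime\Vert=\Vert\rep\Vert$ combines the standard equality $\Vert\tilde\rep\Vert=\Vert\rep\Vert$ for a quotient factorisation with the fact that $\inducedrest^\prime$ is isometric by Lemma~\ref{lem:restricted_system}(3). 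The unital and involutive characterisations are then purely formal: $\inducedrest$ being a unital \supast-homomorphism transports these properties from $\rep^\prime$ to $\rep$ in one direction, and surjectivity of $\inducedrest$ transports them back, since any element of $\ell^1(\inducedsystemshort)$ has a preimage in $\lone$.

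The only substantive step is (3). Given $g\in\inducedcoeffalg$ with $\rep^\prime(g)=0$, Tietze's extension theorem (applied to the real and imaginary parts) yields $f\in\coeffalg$ with $f\rest{\inducedset}=g$. Then $\inducedrest(f)=g$ under the standard embedding of functions as elements of degree zero, so $\rep(f)=\rep^\prime(\inducedrest(f))=\rep^\prime(g)=0$; the defining property of $\inducedset$ now forces $f\rest{\inducedset}=0$, i.e.\ $g=0$. This is the one place where the specific choice of $\inducedset$ as the common zero set of $\{f\in\coeffalg:\rep(f)=0\}$ is genuinely used, and I would flag it as the only minor obstacle in the proof.

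For (4), I would invoke the surjectivity of $\inducedrest$ once more: $\rep(\lone)=\rep^\prime(\ell^1(\inducedsystemshort))$ as subalgebras of $\bounded(\ns)$, so the (closed) $\rep$-invariant subspaces of $\ns$ coincide with the (closed) $\rep^\prime$-invariant subspaces, and $\rep(\lone)(\ns)=\{0\}$ precisely when $\rep^\prime(\ell^1(\inducedsystemshort))(\ns)=\{0\}$. The definitions of algebraic and topological irreducibility then match on both sides. Overall, I anticipate no serious obstacle anywhere; the real content is packaged into Lemma~\ref{lem:restricted_system} and the construction immediately preceding the proposition, and the one-line Tietze argument in (3) is the only step where one actually has to do something.
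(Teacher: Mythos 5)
Your proposal is correct and follows essentially the same route as the paper, which offers no separate proof but declares the proposition ``now clear'' from the construction of $\inducedset$, $\tilde\rep$, and $\rep^\prime=\tilde\rep\circ(\inducedrest^\prime)^{-1}$ in the preceding paragraph together with Lemma~\ref{lem:restricted_system}. Your explicit Tietze argument for part (3) and the surjectivity argument for part (4) are exactly the details the paper leaves implicit.
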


\begin{remark}\label{rem:can_be_assigned}
If $\rep:\lone\to\linear(\vs)$ is an \ai representation of $\lone$ on a vector space $\vs$, then, as noted in Theorem~\ref{thm:material_on_algebraically_irreducible_representations}, we may assume that $\vs$ is a normed space (even that it is a Banach space) and that $\pi:\ba\to\bounded(\vs)$ is continuous (even that it is contractive). Hence we can assign an induced dynamical system $\inducedsystemshort$ and corresponding Banach algebra $\ell^1(\inducedsystemshort)$ to every \ai representation of $\lone$, and Proposition~\ref{prop:induced_system} applies. We shall exploit this several times in Section~\ref{sec:primitive_ideals}.
\end{remark}

The following result is the cornerstone when showing that the primitive ideals corresponding to the infinite dimensional \ai representations of $\lone$ are selfadjoint (see the proof of Theorem~\ref{thm:primitive_ideals_are_selfadjoint}). It relies on one of the main results of \cite{DST}: the commutant $\coeffalg^\prime$ of $\coeffalg$ in $\lone$ has non-zero intersection with every non-zero closed ideal of $\lone$ (see \cite[Theorem~3.7]{DST}).

\begin{proposition}\label{prop:self_adjoint}
Let $\rep$ be a non-zero not necessarily unital continuous homomorphism from $\lone$ into a not necessarily unital normed algebra $\ba$ such that its induced dynamical system $\inducedsystemshort$ is topologically free. As in Proposition~\ref{prop:induced_system}, let $\rep^\prime:\inducedlone\to\ba$ be the continuous homomorphism such that $\pi=\rep^\prime\circ\inducedrest$. Then $\rep^\prime$ is injective on $\ell^1(\inducedsystemshort)$. Consequently, $\Ker(\rep)=\Ker(\inducedrest)=\Kernel(\inducedset)$ is the closed ideal of $\lone$ that is generated by $\{f\in\coeffalg : f\rest{\inducedset}=0\}$. In particular, $\Ker(\rep)$ is selfadjoint.
\end{proposition}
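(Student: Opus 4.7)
The plan is to reduce everything to proving that $\rep^\prime:\inducedlone\to A$ is injective, since then the remaining assertions follow by elementary bookkeeping combined with Lemma~\ref{lem:restricted_system}. I would argue by contradiction: suppose $J:=\Ker(\rep^\prime)$ is a non-zero closed ideal of $\inducedlone$. The main result of \cite{DST} quoted in the paragraph preceding the proposition, applied now to the dynamical system $\inducedsystemshort$ (which is after all of the same type as $\dynsysshort$), says that the commutant $\inducedcoeffalg^\prime$ of $\inducedcoeffalg$ inside $\inducedlone$ meets every non-zero closed ideal of $\inducedlone$ non-trivially. Hence there is a non-zero element $a\in \inducedcoeffalg^\prime\cap J$.

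The purpose of the topological freeness hypothesis on $\inducedsystemshort$ is to identify this commutant: for topologically free systems, $\inducedcoeffalg$ is maximal abelian in $\inducedlone$, so $\inducedcoeffalg^\prime=\inducedcoeffalg$ (this is another consequence of the material in \cite{DST}). Consequently $a$ is a non-zero element of $\inducedcoeffalg$ on which $\rep^\prime$ vanishes, which directly contradicts the injectivity of $\rep^\prime$ on $\inducedcoeffalg$ supplied by Proposition~\ref{prop:induced_system}(3). We conclude $\Ker(\rep^\prime)=\{0\}$.

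For the \emph{consequently} clause, since $\rep=\rep^\prime\circ\inducedrest$ and $\rep^\prime$ is injective, one has $\Ker(\rep)=\Ker(\inducedrest)$, which equals $\Kernel(\inducedset)$ by Lemma~\ref{lem:restricted_system}(2). This ideal is selfadjoint by the same lemma (or directly, since the subset $\{f\in\coeffalg : f\rest{\inducedset}=0\}$ of $\coeffalg$ that generates it is visibly closed under conjugation, and the involution on $\lone$ restricts to conjugation on $\coeffalg$ and sends $\delta^n$ to $\delta^{-n}$).

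The main obstacle is correctly invoking the two ingredients from \cite{DST}\textemdash the non-trivial intersection of the commutant with every non-zero closed ideal, and the identification of this commutant with $\coeffalg$ itself under topological freeness. Once these are in hand, the proof collapses into a one-step pigeonhole between that intersection result and the automatic injectivity of $\rep^\prime$ on the coefficient algebra coming from the very construction of the induced dynamical system; everything beyond that is already encoded in Proposition~\ref{prop:induced_system} and Lemma~\ref{lem:restricted_system}.
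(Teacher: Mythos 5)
Your proposal is correct and follows essentially the same route as the paper's own proof: a contradiction argument combining \cite[Theorem~3.7]{DST} (every non-zero closed ideal meets the commutant of the coefficient algebra) with \cite[Proposition~3.1]{DST} (topological freeness forces that commutant to equal $\inducedcoeffalg$), against the injectivity of $\rep^\prime$ on $\inducedcoeffalg$ from Proposition~\ref{prop:induced_system}. The remaining bookkeeping for the ``consequently'' clause matches the paper as well.
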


\begin{proof}
To see that $\rep^\prime$ is injective, assume that $\Ker(\rep^\prime)\neq\{0\}$. Then the result mentioned preceding the theorem, applied to $\ell^1(\inducedsystemshort)$, implies that $\Ker(\rep^\prime)\cap C(\inducedset)^\prime\neq\{0\}$, where $C(\inducedset)^\prime$ is the commutant of $\inducedcoeffalg$ in $\ell^1(\inducedsystemshort)$. However, since $\Sigma$ is topologically free, \cite[Proposition~3.1]{DST} implies that $C(\inducedset)^\prime=C(\inducedset)$. Hence $\Ker(\rep^\prime)\cap C(\inducedset)\neq\{0\}$. But this contradicts the injectivity of $\rep^\prime$ on $C(\inducedset)$ in Proposition~\ref{prop:induced_system}. Therefore we must have $\Ker(\rep^\prime)=\{0\}$, and hence $\Ker(\rep)=\Ker(\inducedrest)$. The rest is clear.
\end{proof}

We conclude our preparations with a few elementary topological results for which we are not aware of a reference. They are needed in the proof of Theorems~\ref{thm:technical_main_theorem} and~\ref{thm:main_homeomorphism}.

\begin{lemma}\label{lem:topology} Let $S$ be a topological space.
 \begin{enumerate}
  \item Suppose that $S=\bigcup_{i=1}^n S_i$ is the finite disjoint union of subsets $S_i$, where each $S_i$ is a closed subset of $S$ that is a compact Hausdorff space in the induced topology. Then the topological space $S$ is a compact Hausdorff space, and it is the disjoint union $\bigsqcup_{i=1}^n S_i$ of the topological spaces $S_i$ carrying their induced topologies.
  \item Suppose that $S=\bigsqcup_{i\in I} S_i$ is the arbitrary disjoint union of topological spaces $S_i$, and that $T$ is a topological space. Then $S\times T=\bigsqcup_{i\in I}\left(S_i \times T\right)$ as a disjoint union of topological spaces.
  \item Suppose that $S=\bigsqcup_{i\in I} S_i$ is the arbitrary disjoint union of topological spaces $S_i$. Let $\sim$ be an equivalence relation on $S$ such that each equivalence class in $S$ is entirely included in one \ulb unique\urb\ $S_i$. Then $S/\sim=\bigsqcup_{i\in I} \left(S_i/\sim\right)$ as a disjoint union of topological spaces.
  \end{enumerate}
\end{lemma}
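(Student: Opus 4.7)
All three parts are elementary point-set topology; the plan is to identify in each case the relevant open sets and then verify that the two topologies under comparison have the same open sets.

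For (1), the key observation is that each $S_i$ is not only closed in $S$ but also open: its complement $\bigcup_{j\neq i}S_j$ is a \emph{finite} union of closed sets and is therefore closed. Hence the $S_i$ are clopen. Given this, I would verify that a set $U\subseteq S$ is open in $S$ if and only if $U\cap S_i$ is open in $S_i$ for every $i$: the forward implication is the subspace topology, and for the converse I use that $U\cap S_i$ is open in $S_i$, hence open in $S$ (since $S_i$ is open in $S$), and then $U=\bigcup_i (U\cap S_i)$ is a union of open sets. This is exactly the disjoint union topology. Compactness of $S$ is a finite union of compact sets; for Hausdorffness I separate two points lying in the same $S_i$ using the Hausdorff property of $S_i$, and two points in distinct $S_i,S_j$ using the disjoint open sets $S_i,S_j$ themselves.

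For (2), the same pattern works. The underlying sets clearly coincide. Since each $S_i$ is open in $S$ (being a summand of a disjoint union), each $S_i\times T$ is open in $S\times T$. A set $W\subseteq S\times T$ is then open in $S\times T$ if and only if each $W\cap(S_i\times T)$ is open in $S_i\times T$: the forward direction is the subspace topology, and for the converse I again use that openness in the open subspace $S_i\times T$ implies openness in $S\times T$, then write $W=\bigcup_i W\cap(S_i\times T)$. This is the defining property of the disjoint union topology on $\bigsqcup_{i\in I}(S_i\times T)$.

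For (3), let $q:S\to S/\!\sim$ and $q_i:S_i\to S_i/\!\sim$ be the quotient maps. The hypothesis that every equivalence class lies entirely in a unique $S_i$ gives the crucial saturation identity $q^{-1}(q(S_i))=S_i$, so the images $q(S_i)$ partition $S/\!\sim$, and for any $U\subseteq S/\!\sim$ one has $q^{-1}(U)\cap S_i=q_i^{-1}(U\cap q(S_i))$. By definition of the quotient topology on $S/\!\sim$ and the disjoint union topology on $S$, the set $U$ is open in $S/\!\sim$ if and only if $q^{-1}(U)\cap S_i$ is open in $S_i$ for every $i$, which by the displayed identity is equivalent to $U\cap q(S_i)$ being open in $S_i/\!\sim$ for every $i$. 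This is the disjoint union topology on $\bigsqcup_{i\in I}(S_i/\!\sim)$, and the natural bijection identifying $q(S_i)$ with $S_i/\!\sim$ is a homeomorphism.

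The only mild subtlety, and the closest thing to a real obstacle, is the saturation check in (3): without the hypothesis that each class lies in a single $S_i$, the identity $q^{-1}(q(S_i))=S_i$ would fail and the quotient could mix the pieces, so I would be careful to spell out this step. Everything else is bookkeeping about clopen summands.
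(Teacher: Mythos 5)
Your proof is correct, and for parts (2) and (3) — which the paper dismisses as ``completely elementary'' without detail — your write-up supplies exactly the right verifications, including the one genuinely non-automatic point, the saturation identity $q^{-1}(q(S_i))=S_i$ in (3). For part (1) you and the paper share the same key observation, namely that each $S_i$ is open because its complement is a \emph{finite} union of closed sets, and you both obtain compactness and Hausdorffness the same way. Where you diverge is in the final identification of the topology of $S$ with the disjoint union topology: the paper notes that the canonical map from the topological disjoint union $\bigsqcup_{i=1}^n S_i$ to $S$ is a continuous bijection from a compact space to a Hausdorff space and is therefore a homeomorphism, whereas you verify directly that a subset of $S$ is open if and only if its trace on each (clopen) $S_i$ is open. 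Your route is slightly more elementary and in fact shows that the topological identification needs only the clopen-ness of the summands, not compactness or Hausdorffness; the paper's route is shorter to state but leans on the compact-to-Hausdorff homeomorphism criterion. Either is perfectly acceptable here.
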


\begin{proof}
For part (1), we note that each $S_i$ is open. Combining this with the fact that the $S_i$ are Hausdorff, one sees that $S$ is Hausdorff. It is now already clear that $S$ is a compact Hausdorff space. Since the canonical map from the disjoint union of topological spaces $\bigsqcup_{i=1}^n S_i$ to $S$ is a continuous bijection between a compact space and a Hausdorff space, it is a homeomorphism. This proves (1). Parts (2) and (3) are completely elementary.
\end{proof}

\section{Representations associated with points}\label{sec:representations}

In this section, we study representations of $\lone$ that are naturally associated with the points of $\topspace$,  and their algebraic and (when applicable) topological equivalence. To each periodic point corresponds a family (parameterized by $\T$) of finite dimensional \ai representations. Each of the pertinent representation spaces can be supplied with a Hilbert space structure such that the representation is then a \supast-representations, and, taken together, these representations exhaust the \ai representations up to algebraic equivalence (see Theorem~\ref{thm:description_finite_dimensional_representations}). The representations associated with infinite orbits have $\ell^p(\Z)$  for $p\in[1,\infty]$ as representation spaces, and also for these representations we can resolve the algebraic equivalence and the algebraic or topological irreducibility questions (see Proposition~\ref{prop:equivalences} and Theorem~\ref{thm:irreducibility_infinite_dimensional}).

\subsection{Preparations}\label{subsec:preliminaries_representations}

In order not to interrupt the main exposition, we formulate two preparatory results on representations of $\lone$ in this subsection that will be used a number of times.

Part (1) of the following result follows from more general results for covariant representations of Banach algebra dynamical systems (see \cite[Theorem~5.20]{dJMW}), applied to the case $(\coeffalg,\Z,\alpha)$ at hand. However, once one notes that $||\delta^n||=1$ for all $n\in\Z$, both part (1) and (2) can also be derived by elementary arguments for the discrete group $\Z$; the proof is left to the reader.

\begin{lemma}\label{lem:how_to_build_representations}\quad
\begin{enumerate}
\item  Let $\rep:\lone\to\bounded(E)$ be a unital continuous representation of $\lone$ on a normed space $\bs$. Put $T=\rep(\delta)$. Then the restriction $\rho:=\rep\rest{\coeffalg}:\coeffalg\to\bounded(E)$ of $\rep$ to $\coeffalg$ is a unital continuous representation of $\coeffalg$ on $\bs$, $T$ is invertible in $\bounded(\bs)$, $\rho(\alpha(f))=T\rho(f)T^{-1}$ for $f\in\coeffalg$,
and there exists $M\geq 0$ such that $||T^n||\leq M$ for all $n\in\Z$. If $\vs$ is a Banach space, then, conversely,
if $\rho$ and $T$ are given satisfying these four properties, then there
is a unique unital continuous representation $\rep$ of $\lone$ on $\bs$ such that its restriction to $\coeffalg$ is $\rho$ and $T=\rep(\delta)$.
\item Let $\rep:\lone\to\bounded(\hs)$ be a unital \supast-representation of $\lone$ on a Hilbert space $\hs$. Put $T=\rep(\delta)$. Then the restriction $\rho:=\rep\rest{\coeffalg}:\coeffalg\to\bounded(\hs)$ of $\rep$ to $\coeffalg$ is a unital \supast-representation of $\coeffalg$ on $\hs$, $T$ is unitary, and $\rho(\alpha(f))=T\rho(f)T^{-1}$ for $f\in\coeffalg$.
Conversely, if $\rho$ and $T$ are given satisfying these three properties,
then there is a unique unital \supast-representation $\rep$ of $\lone$ on $\hs$ such that its restriction to $\coeffalg$ is $\rho$ and $T=\rep(\delta)$.
\end{enumerate}
\end{lemma}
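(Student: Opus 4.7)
The plan is to treat both parts in parallel, with part~(2) being a variant of (1) obtained by imposing the involutive structure. The forward direction in each case is essentially an unpacking of the homomorphism property on the generators $\coeffalg$, $\delta$, and $\delta^{-1}$ of $\lone$. For (1), $\rho=\rep\rest{\coeffalg}$ is unital and continuous because the inclusion $\coeffalg\hookrightarrow\lone$ is isometric, $T=\rep(\delta)$ is invertible with inverse $\rep(\delta^{-1})$, and the covariance $\rho(\alpha(f))=T\rho(f)T^{-1}$ is the image under $\rep$ of the defining relation $\delta f\delta^{-1}=\alpha(f)$. The uniform bound follows from $\Vert T^n\Vert=\Vert\rep(\delta^n)\Vert\leq\Vert\rep\Vert\cdot\Vert\delta^n\Vert=\Vert\rep\Vert$, so $M=\Vert\rep\Vert$ works. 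For (2), one additionally notes that $T^*=\rep(\delta^*)=\rep(\delta^{-1})=T^{-1}$, so $T$ is unitary; the bound is then automatic with $M=1$.

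For the converse, the natural definition is
\[
\rep\!\left(\sumn f_n\delta^n\right):=\sumn \rho(f_n)T^n.
\]
The key observation is that this series converges absolutely in $\bounded(\bs)$: since $\Vert\rho(f_n)T^n\Vert\leq M\Vert\rho\Vert\,\Vert f_n\Vert$, the partial sums form a Cauchy sequence, and completeness of $\bounded(\bs)$ (which is where the Banach space hypothesis is used) yields a bounded operator with $\Vert\rep(a)\Vert\leq M\Vert\rho\Vert\,\Vert a\Vert$. Linearity and continuity of $\rep$ are then immediate. To verify multiplicativity, I would iterate the covariance relation to get $T^k\rho(f)=\rho(\alpha^k(f))T^k$ for every $k\in\Z$, then compute
\[
\rep(a)\rep(a')=\sum_{k,m}\rho(f_k)T^k\rho(f'_m)T^m=\sum_{k,m}\rho(f_k\cdot\alpha^k(f'_m))T^{k+m},
\]
and identify the inner sum at fixed $n=k+m$ with the twisted convolution $(aa')(n)=\sumk f_k\cdot\alpha^k(f'_{n-k})$, which gives $\rep(a)\rep(a')=\rep(aa')$. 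Unitality is clear since $\rho(1)=I$ and the series collapses.

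For part (2), the same formula works; to confirm that the resulting $\rep$ is a $*$-representation, I would compute both $\rep(a)^*=\sumn T^{-n}\rho(f_n)^*$ and $\rep(a^*)=\sumn\rho(\overline{\alpha^n(f_{-n})})T^n$, and match them after reindexing $n\to -n$ and using $T^*=T^{-1}$ together with $\rho(\overline f)=\rho(f)^*$ and the covariance. Uniqueness in both parts is immediate: any unital continuous homomorphism $\rep$ agreeing with the given $\rho$ on $\coeffalg$ and sending $\delta$ to $T$ must, by multiplicativity, send $f_n\delta^n$ to $\rho(f_n)T^n$, so it agrees with the constructed $\rep$ on the dense subalgebra of finite sums $\sum_{|n|\le N}f_n\delta^n$, and hence everywhere by continuity. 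The only genuine technical point is the norm estimate giving absolute convergence of the defining series in $\bounded(\bs)$; once this is in place the algebra is routine bookkeeping with the twisted convolution.
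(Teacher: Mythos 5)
Your proof is correct and is precisely the elementary argument the paper has in mind: the authors explicitly leave the proof to the reader after noting that $\Vert\delta^n\Vert=1$ for all $n\in\Z$ makes an elementary derivation possible, and your argument (forward direction by evaluating $\rep$ on the generators, converse by absolute convergence of $\sumn\rho(f_n)T^n$ in $\bounded(\vs)$ plus the twisted-convolution bookkeeping) is exactly that derivation. No gaps.
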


We shall need the following result when studying the common eigenspaces for $\coeffalg$ that are associated with representations of $\lone$.

\begin{lemma}\label{lem:orbits_eigenvalues}
Let $\rep:\lone\to\linear(\vs)$ be a unital representation of $\lone$ on a
vector space $\vs$. For $\pt\in \topspace$, let
\begin{equation}\label{eq:common_eigenspace}
\vs_\pt=\{e\in \vs : \rep(f)e=f(\pt)e \textup{ for all }f\in\coeffalg\}.
\end{equation}
Then:
\begin{enumerate}
\item $\rep(\delta^n)\vs_\pt=\vs_{\homeo^n \pt}$  for all $n\in\Z$;
\item If $\{\pt\}$ is an open subset of $\topspace$ and $\chi_\pt\in \coeffalg$ is its characteristic function, then $\rep(\chi_\pt)$ is a projection, and $\rep(\chi_\pt)\vs=\vs_\pt$.
\item If $x_1,\ldots,x_k\in\topspace$ are $k$ different points, then the sum $\sum_{i=1}^k \vs_{x_i}$ is a direct sum.
\end{enumerate}
\end{lemma}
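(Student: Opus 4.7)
The plan is to prove the three claims in order, using in each case the interaction between $\pi(\delta^{\pm 1})$ and $\pi\rest{C(X)}$, together with the defining relation $\delta f\delta^{-1}=\alpha(f)=f\circ\homeo^{-1}$ in $\lone$.

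For part (1), I would start from the relation $\rep(\delta)\rep(f)=\rep(\alpha(f))\rep(\delta)$ for $f\in\coeffalg$, which yields $\rep(f)\rep(\delta)=\rep(\delta)\rep(\alpha^{-1}(f))=\rep(\delta)\rep(f\circ\homeo)$ after rearranging. Applied to $e\in\vs_\pt$, this gives $\rep(f)\rep(\delta)e=\rep(\delta)(f\circ\homeo)(\pt)e=f(\homeo \pt)\rep(\delta)e$, which shows $\rep(\delta)\vs_\pt\subseteq\vs_{\homeo \pt}$. The reverse containment $\vs_{\homeo \pt}\subseteq\rep(\delta)\vs_\pt$ follows from the analogous computation for $\delta^{-1}$ (which gives $\rep(\delta^{-1})\vs_{\homeo \pt}\subseteq\vs_\pt$) together with unitality of $\rep$ so that $\rep(\delta)\rep(\delta^{-1})=I$. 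Iterating, or applying the same argument with $\delta^n$ and $\alpha^n$ in place of $\delta$ and $\alpha$, gives the statement for all $n\in\Z$.

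For part (2), the key point is that $\chi_\pt$ is an idempotent in $\coeffalg$ (since $\{\pt\}$ is open and closed in the subspace $\{\pt\}\cup(\topspace\setminus\{\pt\})$, so $\chi_\pt\in\coeffalg$ and $\chi_\pt^2=\chi_\pt$), and that $f\chi_\pt=f(\pt)\chi_\pt$ for every $f\in\coeffalg$. The first identity, combined with the fact that $\rep$ is a homomorphism, immediately makes $\rep(\chi_\pt)$ a projection. The second identity gives $\rep(f)\rep(\chi_\pt)e=f(\pt)\rep(\chi_\pt)e$ for every $e\in\vs$ and $f\in\coeffalg$, so $\rep(\chi_\pt)\vs\subseteq\vs_\pt$. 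Conversely, for $e\in\vs_\pt$ one has $\rep(\chi_\pt)e=\chi_\pt(\pt)e=e$, yielding the reverse inclusion.

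For part (3), I would give the standard eigenspace argument, whose only ingredient beyond linear algebra is that $\coeffalg$ separates the points of $\topspace$. Since $\topspace$ is compact Hausdorff, hence normal, Urysohn's lemma produces, for each $j\in\{1,\ldots,k\}$, a function $f_j\in\coeffalg$ with $f_j(\pt_j)=1$ and $f_j(\pt_i)=0$ for $i\neq j$. Given a relation $\sum_{i=1}^k e_i=0$ with $e_i\in\vs_{\pt_i}$, applying $\rep(f_j)$ yields $\sum_{i=1}^k f_j(\pt_i)e_i=e_j=0$, so the sum is direct.

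None of the three items presents a genuine obstacle; the only place where a very small amount of care is needed is the algebraic manipulation in part (1), where one must not confuse $\alpha$ with $\alpha^{-1}$ when moving $\rep(f)$ past $\rep(\delta)$. The input from the ambient theory is minimal: essentially only the defining relation of $\lone$, unitality of $\rep$, and Urysohn's lemma on a compact Hausdorff space.
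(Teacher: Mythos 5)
Your proof is correct and follows essentially the same route as the paper's: the covariance relation $\rep(f)\rep(\delta)=\rep(\delta)\rep(\alpha^{-1}(f))$ for part (1), the identities $\chi_\pt^2=\chi_\pt$ and $f\chi_\pt=f(\pt)\chi_\pt$ for part (2), and separating functions (Urysohn) applied to a vanishing sum for part (3). No gaps.
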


\begin{proof}
If $f\in\coeffalg$ and $e\in \vs_\pt$, then
\[
\rep(f)\rep(\delta)e=\rep(\delta)\rep(\alpha^{-1}(f))e=(\alpha^{-1}(f))(\pt)\rep(\delta)e=f(\homeo \pt)\rep(\delta e).
\]
Hence $\rep(\delta)\vs_\pt \subset \vs_{\homeo \pt}$. Likewise, $\rep(\delta)^{-1} \vs_\pt \subset \vs_{\homeo^{-1}\pt}$; hence $\vs_{\homeo \pt}\subset\rep(\delta)\vs_{\homeo^{-1}\homeo \pt}=\rep(\delta)\vs_\pt$. We conclude that $\rep(\delta)\vs_\pt=\vs_{\homeo \pt}$ for all $\pt\in \topspace$, and this easily implies (1). For (2), suppose that $e\in\rep(\chi_\pt)\vs$. If $f\in\coeffalg$, then
\[
\rep(f)e=\rep(f)\rep(\chi_\pt)e=\rep(f\chi_\pt)e=\rep(f(\pt)\chi_\pt)e=f(\pt)\rep(\chi_e)e=f(\pt)e.
\]
Hence $\rep(\chi_\pt)\vs\subset \vs_\pt$. Conversely, if $e\in \vs_\pt$, then in particular $\rep(\chi_\pt)e=\chi_\pt(\pt)e=e$, so that $\vs_\pt\subset\rep(\chi_\pt)\vs$.
Turning to (3), suppose that $\sum_{i=1}^k e_i=0$, where $e_i\in E_{\pt_i}$  for $i=1,\ldots,k$. For each $i_0$ such that $1\leq i_0\leq k$, there exists $f_{i_0}\in\coeffalg$ such that $f(\pt_{i_0})=1$ and $f(\pt_i)=0$ for $i\neq i_0$. Letting $f_{i_0}$ act shows that $e_{i_0}=0$.

\end{proof}

\subsection{Finite dimensional representations associated with periodic points}\label{subsec:finite_dimensional}

The algebraic equivalence classes of finite dimensional \ai representations of $\lone$ can be described explicitly in terms of the space of finite orbits and $\T$ (see Theorem~\ref{thm:description_finite_dimensional_representations}). We shall now proceed towards this result.

One can associate a \supast-representation with $\pt\in\perpoints$ and a unimodular complex number $\lambda\in\T$, as follows. Let $p$ be the period of $\pt$, and let $\hs_{\pt,\lambda}$ be a Hilbert space with orthonormal basis $\{e_0,\ldots,e_{p-1}\}$. Let $T_\lambda\in\bounded(\hs)$ be represented with respect to this basis by the matrix
$$
\left(
\begin{array}{ccccc}
0 & 0 & \ldots & 0 & \lambda \\
1 & 0 & \ldots & 0 & 0 \\
0 & 1 & \ldots & 0&0\\
\vdots & \vdots & \ddots &\vdots &\vdots \\
0 & 0 & \ldots & 1&0
\end{array}\right).
$$
For $f\in\coeffalg$, let $\rho_{\pt}(f)$ be represented with respect
to this basis by the matrix
$$
\left(
\begin{array}{cccc}
f({\pt}) & 0 & \ldots & 0 \\
0 & f (\homeo {\pt}) & \ldots & 0 \\
\vdots & \vdots & \ddots & \vdots \\
0 & 0 & \ldots & f(\homeo^{p-1} {\pt})
\end{array} \right).
$$
It is easily checked that $\rho$ and $T$ meet the boundedness and covariance requirements in the second part of  Lemma~\ref{lem:how_to_build_representations}; hence there exists a unique \supast-representation $\rep_{\pt,\lambda}:\lone\to\bounded(\hs_{\pt,
\lambda})$ such that $\rep_{\pt,\lambda}\rest{\coeffalg}=\rho_{\pt}$ and $\rep(\delta)=T_\lambda$.

\begin{proposition}[Irreducibility and equivalences]\label{prop:finite_dimensional_given}
Let $\pt\in\perpoints$ and let $\lambda\in\T$. Then the \supast-representation $\rep_{\pt,\lambda}$ on $H_{\pt,\lambda}$ is \ains; the dimension of $H_{\pt,\lambda}$ is the cardinality of the orbit of $\pt$.

If $\pt,y\in\perpoints$ and $\lambda,\mu\in\T$, then the following are equivalent:
\begin{enumerate}
\item $\rep_{\pt,\lambda}$ and $\rep_{y,\mu}$ are algebraically equivalent;
\item $\rep_{\pt,\lambda}$ and $\rep_{y,\mu}$ are topologically equivalent;
\item $\rep_{\pt,\lambda}$ and $\rep_{y,\mu}$ are unitarily equivalent;
\item  The orbits of $\pt$ and $y$ coincide, and $\lambda=\mu$.
\end{enumerate}
\end{proposition}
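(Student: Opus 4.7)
The plan is to prove, in order, algebraic irreducibility, the dimension statement, and then the chain of implications $(3)\Rightarrow(2)\Rightarrow(1)\Rightarrow(4)\Rightarrow(3)$, of which the first two links are immediate (unitary maps are topological equivalences, and in finite dimensions every linear intertwiner is a topological equivalence). This leaves the two real tasks: proving $(1)\Rightarrow(4)$ and proving $(4)\Rightarrow(3)$.

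For the algebraic irreducibility of $\rep_{\pt,\lambda}$, I would take a nonzero invariant subspace $V\subseteq \hs_{\pt,\lambda}$ and a nonzero vector $v=\sum_{i=0}^{p-1}c_i e_i\in V$. Since the orbit points $\pt,\homeo \pt,\ldots,\homeo^{p-1}\pt$ are distinct in the Hausdorff space $\topspace$, Urysohn/Tietze produces $f_{i_0}\in\coeffalg$ with $f_{i_0}(\homeo^j \pt)=\delta_{i_0 j}$; then $\rep_{\pt,\lambda}(f_{i_0})v=c_{i_0}e_{i_0}$, so some $e_{i_0}\in V$. The defining action of $T_\lambda$ now cycles $e_{i_0}$ (up to the scalar $\lambda$) through all basis vectors, giving $V=\hs_{\pt,\lambda}$. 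The dimension statement is immediate from the construction. The nontriviality condition $\rep(\ba)\hs\neq\{0\}$ is automatic because $\rep$ is unital.

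For $(1)\Rightarrow(4)$, suppose $V:\hs_{\pt,\lambda}\to\hs_{y,\mu}$ is a linear bijection intertwining $\rep_{\pt,\lambda}$ and $\rep_{y,\mu}$. Since $V$ commutes with the action of $\coeffalg$, it maps the common eigenspace $\hs_{\pt,\lambda,z}$ (as in Lemma~\ref{lem:orbits_eigenvalues}) to $\hs_{y,\mu,z}$ for every $z\in\topspace$. In $\hs_{\pt,\lambda}$ these eigenspaces are nonzero precisely on the orbit of $\pt$ (each spanned by a single basis vector), and analogously in $\hs_{y,\mu}$. Comparing supports forces the two orbits to coincide. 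Now the matrix of $T_\lambda$ satisfies $T_\lambda^p=\lambda\cdot I$ on $\hs_{\pt,\lambda}$ (walking $e_0\mapsto e_1\mapsto\cdots\mapsto e_{p-1}\mapsto\lambda e_0$), and similarly $T_\mu^p=\mu\cdot I$. The intertwining property gives $\lambda\cdot I = V^{-1}(\mu\cdot I)V=\mu\cdot I$, hence $\lambda=\mu$.

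For $(4)\Rightarrow(3)$, it suffices, by induction, to produce an explicit unitary $U:\hs_{\pt,\lambda}\to\hs_{\homeo \pt,\lambda}$ intertwining $\rep_{\pt,\lambda}$ with $\rep_{\homeo \pt,\lambda}$ (writing $e_i$ and $e_i'$ for the respective bases). The natural candidate is $U e_i=e_{i-1}'$ for $1\le i\le p-1$ and $U e_0=\overline{\lambda}\,e_{p-1}'$; it is clearly unitary. A direct verification shows $U$ intertwines the $\coeffalg$-actions (using $\homeo^{i-1}(\homeo \pt)=\homeo^i \pt$ and $\homeo^{p-1}(\homeo \pt)=\pt$) and also $T_\lambda$ with $T_\lambda'$, where the scalar $\overline\lambda$ is precisely what makes the wrap-around $T_\lambda e_{p-1}=\lambda e_0$ compatible with $T_\lambda' e_{p-2}'=e_{p-1}'$. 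Iterating this unitary equivalence $k$ times yields unitary equivalence of $\rep_{\pt,\lambda}$ and $\rep_{\homeo^k \pt,\lambda}$, covering the general case of $(4)$.

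The main obstacle is the bookkeeping in the $(4)\Rightarrow(3)$ step, choosing the correct phase $\overline{\lambda}$ on one basis vector so that the wrap-around relation in both representations is matched; everything else is essentially formal given Lemma~\ref{lem:orbits_eigenvalues} and the observation that $T_\lambda^p=\lambda\cdot I$.
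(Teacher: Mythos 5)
Your proposal is correct and follows essentially the same route as the paper's proof: cut out a basis vector with a peaking function and cycle it with $\delta$ for irreducibility, compare the common $\coeffalg$-eigenspaces and the scalar $\rep(\delta)^p$ for $(1)\Rightarrow(4)$, and exhibit an explicit unitary realizing the shift $\pt\mapsto\homeo\pt$ for $(4)\Rightarrow(3)$. The only differences are cosmetic: the paper deduces equality of periods from equality of dimensions before comparing eigenspaces, and its unitary ($Ue_0=e'_{p-1}$, $Ue_j=\lambda e'_{j-1}$) is your $U$ multiplied by the unimodular scalar $\lambda$.
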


\begin{proof}
Suppose that $\pt$ has period $p$. If $L\subset\hs_{\pt,\lambda}$ is a non-zero $\lone$-invariant subspace of $H_{\pt,\lambda}$, choose $h=\sum_{i=1}^p \xi_i e_i\in L$ with $\xi_{i_0}\neq 0$ for some $i_0$ such that $1\leq i_0\leq p$. Since the points $\pt,\ldots,\homeo^{p-1}\pt$ are different, one can choose $f\in\coeffalg$ such that $f(\homeo^{i_0}\pt)=1$ and $f$ vanishes at the other points of the orbit of $\pt$. Letting $f$ act on $h$, we see that $e_{i_0}\in L$, and then the action of $\delta$ implies that $L=H$. Hence $\rep_{\pt,\lambda}$ is \ains.

We turn to the equivalences.

Suppose that $\rep_{\pt,\lambda}$ and $\rep_{y,\mu}$ are algebraically equivalent, where $\pt$ has period $p$ and $y$ has period $n$. Since the dimensions of the representation spaces are then equal, we must have $n=p$. Furthermore, since $\left(\rep_{\pt,\lambda}(\delta)\right)^p=\lambda$ and $\left(\rep_{\pt,\mu}(\delta)\right)^n=\mu$, we can then also conclude that $\lambda=\mu$. Since $H_{\pt,\lambda}$ is the direct sum of common eigenspaces of the elements of $\rep(\coeffalg)$, with each summand corresponding to a point of the orbit of $\pt$, this must also be the case for $H_{y,\mu}$. Since there is an analogous decomposition of $H_{y,\mu}$ in terms of the orbit of $y$, the third part of Lemma~\ref{lem:orbits_eigenvalues} implies that the orbit of $\pt$ is included in the orbit of $y$. The converse inclusion follows likewise, and hence the orbits are equal. This shows that (1) implies (4).

In order to show that (4) implies (3), it is sufficient to prove that $\rep_{\pt,\lambda}$ and $\rep_{\homeo \pt,\lambda}$ are unitarily equivalent for $\lambda\in\T$ and $\pt\in\pperpoints$ with $p\geq 2$. For this, let $e_0,\ldots,e_{p-1}$\ be the orthonormal basis of $\hs_{\pt,\lambda}$ as described  in the construction of the representation $\rep_{\pt,\lambda}$ on $\hs_{\pt,\lambda}$, and let $e^\prime_0,\ldots,e^\prime_{p-1}$ be the orthonormal basis of $\hs_{\homeo \pt,\lambda}$ as described in the construction of the representations $\rep_{\homeo \pt,\lambda}$ on $\hs_{\homeo \pt,\lambda}$. Define $U:\hs_{\pt,\lambda}\to\hs_{\homeo \pt,\lambda}$ by $Ue_0=e^\prime_{p-1}$ and by $Ue_j=\lambda e^\prime_{j-1}$  for $j$ such that $1 \leq j\leq p-1$. Then it is easily checked that $U$ implements a unitary equivalence between $\rep_{\pt,\lambda}$ and $\rep_{\homeo \pt,\lambda}$. Hence (4) implies (3).

It is trivial that (3) implies (2), and that (2) implies (1).
\end{proof}

The picture for the finite dimensional \ai representation of $\lone$ is completed by the following result. We use the notation as in \eqref{eq:common_eigenspace} in its proof, that uses the second statement in Theorem~\ref{thm:material_on_algebraically_irreducible_representations}.(2) in an essential way.

\begin{proposition}[Exhaustion]\label{prop:structure_of_finite_dimensional_representations}
Let $\rep:\lone\to\linear(\vs)$ be an \ai representation of $\lone$ on a finite dimensional vector space $\vs$. Then there exist $\pt\in\perpoints$ and $\lambda\in\T$ such that $\rep$ and $\rep_{\pt,\lambda}$ are algebraically equivalent.
\end{proposition}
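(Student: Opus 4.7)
The plan is to extract from $\pi$ a periodic point $x$ and a unimodular $\lambda$ whose associated representation $\pi_{x,\lambda}$ is algebraically equivalent to $\pi$, by first pinning down the induced dynamical system, then invoking Schur's lemma from Theorem~\ref{thm:material_on_algebraically_irreducible_representations}(1) to pin down a ``rotation'' scalar, and finally constructing an intertwiner on a concrete basis.

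First, I would use Theorem~\ref{thm:material_on_algebraically_irreducible_representations}(2) to give $\vs$ a Banach space norm such that $\rep$ is contractive, which puts us in the setting of Proposition~\ref{prop:induced_system}. Let $\inducedset$ be the induced closed invariant subset, and let $\rep'$ be the induced representation of $\inducedlone$. Because $\dim E<\infty$ and $\rep'$ is injective on $\inducedcoeffalg$ (Proposition~\ref{prop:induced_system}(3)), $\inducedcoeffalg$ is itself finite dimensional, so $\inducedset$ is a finite set, hence a finite disjoint union of finite $\homeo$-orbits. For each $y\in\inducedset$, the singleton $\{y\}$ is open in $\inducedset$, so $\chi_y\in\inducedcoeffalg$ and Lemma~\ref{lem:orbits_eigenvalues}(2) gives $\vs_y=\rep'(\chi_y)\vs\neq\{0\}$. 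The decomposition $\vs=\bigoplus_{y\in\inducedset}\vs_y$ (Lemma~\ref{lem:orbits_eigenvalues}(3) plus $\sum_y\chi_y=1$ in $\inducedcoeffalg$) collects, for each orbit $\orb\subset\inducedset$, the subspace $\vs_\orb=\bigoplus_{y\in\orb}\vs_y$, which is invariant under $\rep(\coeffalg)$ by construction and under $\rep(\delta)$ by Lemma~\ref{lem:orbits_eigenvalues}(1); hence invariant under $\rep(\lone)$. Algebraic irreducibility forces $\inducedset$ to be a single orbit, of some period $p\geq 1$; pick $\pt\in\inducedset$.

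Next, I would identify the scalar $\lambda$. Since $\homeo^p$ acts trivially on $\inducedset$, the automorphism $\alpha^p$ acts trivially on $\inducedcoeffalg$, so $\rep(\delta)^p$ commutes with $\rep(f)$ for every $f\in\coeffalg$ (using $\rep(\delta)^p\rep(f)\rep(\delta)^{-p}=\rep(\alpha^p f)=\rep(f)$, via injectivity of $\rep'$ on $\inducedcoeffalg$). It trivially commutes with $\rep(\delta)^{\pm 1}$, hence lies in the commutant of $\rep(\lone)$, which by Theorem~\ref{thm:material_on_algebraically_irreducible_representations}(1) equals $\C\cdot I$. Thus $\rep(\delta)^p=\lambda I$ for some $\lambda\in\C$. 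Since $\rep$ is contractive and $\Vert\delta^{\pm p}\Vert=1$, we have $|\lambda|=\Vert\rep(\delta^p)\Vert\leq 1$ and $|\lambda^{-1}|=\Vert\rep(\delta^{-p})\Vert\leq 1$, so $\lambda\in\T$.

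Finally, I would build the intertwiner explicitly. Fix a non-zero $v\in \vs_\pt$. By Lemma~\ref{lem:orbits_eigenvalues}(1) the vectors $\rep(\delta)^j v$ lie in the distinct eigenspaces $\vs_{\homeo^j\pt}$ for $0\leq j\leq p-1$, so are linearly independent by Lemma~\ref{lem:orbits_eigenvalues}(3); let $W$ be their span. Then $W$ is $\rep(\delta)$-invariant, since $\rep(\delta)^pv=\lambda v\in W$, and it is $\rep(\coeffalg)$-invariant, since $\rep(f)\rep(\delta)^jv=\rep(\delta)^j\rep(\alpha^{-j}f)v=f(\homeo^j\pt)\rep(\delta)^jv$. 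Algebraic irreducibility gives $W=\vs$, so $\dim \vs=p$. With $e_0,\ldots,e_{p-1}$ the distinguished basis of $\hs_{\pt,\lambda}$, the linear map $U:\hs_{\pt,\lambda}\to\vs$ defined by $Ue_j=\rep(\delta)^jv$ is a bijection, and a direct check against the matrices of $T_\lambda$ and $\rho_\pt(f)$ shows that $U$ intertwines $\rep_{\pt,\lambda}$ with $\rep$, completing the proof. The main obstacle I anticipate is the ``single orbit'' step of the first paragraph, where one has to leverage irreducibility carefully against the decomposition coming from the idempotents $\rep'(\chi_y)$; everything after the reduction to a single orbit is forced by Schur's lemma and bookkeeping with the basis.
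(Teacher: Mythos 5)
Your proof is correct, but it takes a genuinely different route from the paper's in its first half. The paper avoids the induced dynamical system entirely: it picks a single common eigenvector for the commuting family $\rep(\coeffalg)$ acting on the finite dimensional complex space $\vs$, obtaining one point $\pt$ with $\vs_\pt\neq\{0\}$, and then proves that $\pt$ is periodic by a dimension count (if $\pt,\ldots,\homeo^{\dim(\vs)}\pt$ were all distinct, Lemma~\ref{lem:orbits_eigenvalues} would force $\dim\vs\geq\dim\vs+1$). You instead invoke Proposition~\ref{prop:induced_system}, note that injectivity of $\rep^\prime$ on $\inducedcoeffalg$ forces $\inducedset$ to be finite, decompose $\vs=\bigoplus_{y\in\inducedset}\vs_y$, and use irreducibility to cut down to a single orbit; this is heavier machinery, but it buys the extra information that $\inducedset$ is exactly the orbit of $\pt$, and it is closer in spirit to the paper's later Proposition~\ref{prop:must_be_finite_dimensional}. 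The identification of $\lambda$ also differs: the paper takes an eigenvector of $\rep(\delta)^p$ inside $\vs_\pt$ and deduces $|\lambda|=1$ from two-sided power-boundedness, whereas you show $\rep(\delta)^p$ is globally scalar via Theorem~\ref{thm:material_on_algebraically_irreducible_representations}(1) and then use contractivity together with $\Vert\delta^{\pm p}\Vert=1$; both are sound, and again your variant matches the argument in Proposition~\ref{prop:must_be_finite_dimensional}. The endgame\textemdash the basis $\rep(\delta)^j v$ and the matrix comparison\textemdash is identical to the paper's. Two points you leave implicit are that the invariance of $W$ under the generators upgrades to invariance under all of $\lone$ only via closedness of $W$ plus continuity of $\rep$ (the same argument you spelled out for $\vs_\orb$), and that the intertwining of $U$ checked on generators extends to $\lone$ via the uniqueness clause of Lemma~\ref{lem:how_to_build_representations}; both are routine and do not constitute gaps.
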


\begin{proof}
Since $\rep(\coeffalg)$ is a commuting family of linear maps on the finite dimensional complex vector space $\vs$, there exists a common eigenvector $e_0$ for this family. Consequently, there exists a point $\pt\in\topspace$ such that $\vs_\pt\neq 0$. If the points $\pt,\ldots,\homeo^{\dim (\vs)}\pt$ were all different, then the first and third parts of  Lemma~\ref{lem:orbits_eigenvalues} would imply that $\dim (\vs)\geq\dim\left(\sum_{i=0}^{\dim (\vs)}\vs_{\homeo^i\pt}\right)=\dim\left(\bigoplus_{i=0}^{\dim (\vs)}\vs_{\homeo^i\pt}\right)\geq\dim(\vs)+1$. This contradiction implies that $\pt$ is a periodic point. Let $p\geq 1$ be such that $\pt\in\pperpoints$. Then $\rep(\delta)^{p}\vs_\pt=\vs_{\homeo^p \pt}=\vs_\pt$; hence there exist a non-zero $e_0\in\vs_\pt$ and $\lambda\in\C$\ such that $\rep(\delta)^p e_0=\lambda e_0$. Since $\rep$ is automatically unital, $\rep(\delta)$ is invertible, and hence $\lambda\neq 0$. The second part of Theorem~\ref{thm:material_on_algebraically_irreducible_representations} allows us to introduce a norm on $\vs$ such that $\rep$ is a continuous representation, and then Lemma~\ref{lem:how_to_build_representations} implies that $\rep(\delta)$ is two-sided power bounded. In particular, $\Vert\rep(\delta)^{kp}e_0\Vert=|\lambda|^{k}\Vert e_0\Vert$ is bounded as $k$ ranges over $\Z$. Hence $\lambda\in\T$. To conclude the proof, we let $e_j=\rep(\delta)^j e_0\in\vs_{\homeo^j x}$ for $j= 0,\ldots,p-1$. It follows from Lemma~\ref{lem:orbits_eigenvalues} that the $e_j$ are linearly independent. Furthermore, it is easy to see that they span a non-zero subspace of $\vs$ that is invariant under $\rep(\delta)$, $\rep(\delta^{-1})$, and $\rep(\coeffalg)$. Since it is closed in our topology and $\rep$ is continuous, it is in fact invariant under $\lone$. Therefore it equals $\vs$,  and we conclude that the $e_j$ form a basis of $\vs$. It is immediate that, with respect to this basis, the matrix of $\rep(\delta)$ and, for every $f\in\coeffalg$, the matrix of $\rep(f)$ are as in the construction of $\rep_{\pt,\lambda}$ outlined above. The uniqueness statement in the first part of Lemma~\ref{lem:how_to_build_representations} then implies that $\rep$ and $\rep_{\pt,\lambda}$ are algebraically equivalent representations of $\lone$.
\end{proof}

The following description of the algebraic equivalence classes of finite dimensional \ai representations of $\lone$ in terms of an orbit space is now clear from Propositions~\ref{prop:finite_dimensional_given} and~\ref{prop:structure_of_finite_dimensional_representations}.

\begin{theorem}[Finite dimensional representations]\label{thm:description_finite_dimensional_representations} The Banach algebra $\lone$ has finite dimensional  \ai representations if and only if $\perpoints\neq \emptyset$. In that case, let
$\orbspace{\perpoints}$ be the space of finite orbits. If $\orb\in\orbspace{\perpoints}$ and $\lambda\in\T$, choose $\pt\in\orb$, and let $\Xi_{\orb,\lambda}$ denote the algebraic equivalence class of $\rep_{\pt,\lambda}$. Then this is well defined, and $\Xi$ is a bijection between $\orbspace{\perpoints}\times \T$ and the collection of algebraic equivalence classes of finite dimensional \ai representations of $\lone$.

Each of the representation spaces can be supplied with a Hilbert space structure such that the representation is a \supast-representations. In particular, all corresponding primitive ideals are selfadjoint.
\end{theorem}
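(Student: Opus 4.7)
The plan is to assemble the theorem from Propositions~\ref{prop:finite_dimensional_given} and~\ref{prop:structure_of_finite_dimensional_representations}; no new algebraic argument is required, only a careful bookkeeping of what each proposition provides.

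First I would handle the existence dichotomy. If $\perpoints=\emptyset$, then by Proposition~\ref{prop:structure_of_finite_dimensional_representations} any finite dimensional \ai representation would furnish a periodic point, a contradiction; so no such representation exists. Conversely, if $\perpoints\neq\emptyset$, any choice of $\pt\in\perpoints$ and $\lambda\in\T$ produces a finite dimensional \ai representation $\rep_{\pt,\lambda}$ by Proposition~\ref{prop:finite_dimensional_given}.

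Next I would verify that $\Xi$ is well defined and bijective. Well-definedness is the implication (4)$\Rightarrow$(1) of Proposition~\ref{prop:finite_dimensional_given}: two representatives $\pt,y$ of the same finite orbit have coinciding orbits, so $\rep_{\pt,\lambda}$ and $\rep_{y,\lambda}$ are algebraically equivalent; hence $\Xi_{\orb,\lambda}$ does not depend on the chosen representative. Injectivity is the implication (1)$\Rightarrow$(4) of the same proposition: if $\Xi_{\orb,\lambda}=\Xi_{\orb^\prime,\lambda^\prime}$, then with representatives $\pt\in\orb$ and $y\in\orb^\prime$ we have $\rep_{\pt,\lambda}$ algebraically equivalent to $\rep_{y,\lambda^\prime}$, which forces $\orb=\orb^\prime$ and $\lambda=\lambda^\prime$. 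Surjectivity is precisely Proposition~\ref{prop:structure_of_finite_dimensional_representations}.

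Finally I would record the Hilbert space and primitive ideal statements. The construction of $\rep_{\pt,\lambda}$ preceding Proposition~\ref{prop:finite_dimensional_given} already takes place on a Hilbert space $\hs_{\pt,\lambda}$, and the second part of Lemma~\ref{lem:how_to_build_representations} certifies that $\rep_{\pt,\lambda}$ is a \supast-representation; hence every algebraic equivalence class admits a \supast-representative on a Hilbert space. The kernel of a \supast-representation is a selfadjoint ideal, and algebraic equivalence preserves kernels\textemdash if $U\rep(a)=\rep^\prime(a)U$ for all $a$ and some linear bijection $U$, then $\rep(a)=0$ if and only if $\rep^\prime(a)=0$\textemdash so the primitive ideal attached to each equivalence class is selfadjoint. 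There is no real obstacle: the theorem is purely a repackaging of Propositions~\ref{prop:finite_dimensional_given} and~\ref{prop:structure_of_finite_dimensional_representations}, and the only point that deserves explicit mention is the (routine) observation that algebraic equivalence preserves kernels.
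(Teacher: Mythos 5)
Your proposal is correct and matches the paper's own treatment: the paper simply states that the theorem "is now clear from Propositions~\ref{prop:finite_dimensional_given} and~\ref{prop:structure_of_finite_dimensional_representations}", and your bookkeeping (existence via the two propositions, well-definedness and injectivity from the equivalence (1)$\Leftrightarrow$(4), surjectivity from the exhaustion result, and selfadjointness from the \supast-representative together with kernel-preservation under algebraic equivalence) is exactly the intended argument.
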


We conclude by showing that the \ai representations of $\lone$ are all finite dimensional when $\topspace$ is a finite set (see Theorem~\ref{thm:finite_set}). This will be used in the proof of Corollary~\ref{cor:properties_induced_system}, which, in turn, is necessary to establish Proposition~\ref{prop:all_finite_dimensional}. The latter is a more general result than Theorem~\ref{thm:finite_set} on which it partly builds. It is based on the following result, valid for general $\topspace$.

\begin{proposition}\label{prop:must_be_finite_dimensional}
Let $\rep:\lone\to \linear(E)$ be an \ai representation of $\lone$ on the vector space $\vs$. Suppose that $\pt\in\perpoints$ is such that $\{\pt\}$ is open, and that $\rep(\chi_\pt)\neq 0$, where $\chi_\pt\in\coeffalg$ denotes the characteristic function of $\{\pt\}$. Then there exists $\lambda\in\T$ such that $\rep$ is algebraically equivalent to the \ai representation $\rep_{\pt,\lambda}$ associated with the periodic point $\pt$ and $\lambda\in\T$. In particular, $\vs$ is finite dimensional.
\end{proposition}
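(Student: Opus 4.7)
The plan is to combine the continuous renorming of $\rep$ with the decomposition of $\vs$ into common eigenspaces of $\rep(\coeffalg)$ afforded by the openness hypothesis. First, using Theorem~\ref{thm:material_on_algebraically_irreducible_representations}.(2), I would equip $\vs$ with a Banach space norm making $\rep$ contractive; then Lemma~\ref{lem:how_to_build_representations}.(1) gives that $T:=\rep(\delta)$ is invertible and $\sup_{n\in\Z}\|T^n\|<\infty$. Let $p$ denote the period of $\pt$. Since $\homeo$ is a homeomorphism, each $\{\homeo^j\pt\}$ is open, so each $\chi_{\homeo^j\pt}$ lies in $\coeffalg$, and by Lemma~\ref{lem:orbits_eigenvalues}.(2) the eigenspace $\vs_{\homeo^j\pt}=\rep(\chi_{\homeo^j\pt})\vs$ is the range of a bounded projection, hence closed. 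Put $V:=\sum_{j=0}^{p-1}\vs_{\homeo^j\pt}$: the sum is direct by Lemma~\ref{lem:orbits_eigenvalues}.(3), and $V$ is the image of the bounded projection $\rep(\sum_{j=0}^{p-1}\chi_{\homeo^j\pt})$, hence closed. It is invariant under $T$, $T^{-1}$ (Lemma~\ref{lem:orbits_eigenvalues}.(1)) and under $\rep(\coeffalg)$, and continuity of $\rep$ together with the fact that $\coeffalg,\delta,\delta^{-1}$ generate $\lone$ as a Banach algebra upgrades this to $\rep(\lone)$-invariance. Since $V\supseteq\vs_\pt\neq\{0\}$ and $\rep$ is \ains, $V=\vs$, so $\vs=\bigoplus_{j=0}^{p-1}\vs_{\homeo^j\pt}$.

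The decisive step is that $T^p$ is central in $\rep(\lone)$. On each summand $\vs_{\homeo^j\pt}$, every $\rep(f)$ with $f\in\coeffalg$ acts as the scalar $f(\homeo^j\pt)$, and $T^p$ preserves $\vs_{\homeo^j\pt}$ by Lemma~\ref{lem:orbits_eigenvalues}.(1), so $\rep(f)T^p=T^p\rep(f)$ on each summand, and hence on $\vs$. As $T^p$ also commutes with $T$ and $T^{-1}$, continuity of $\rep$ shows that $T^p$ commutes with every element of $\rep(\lone)$. Schur's lemma, Theorem~\ref{thm:material_on_algebraically_irreducible_representations}.(1), then gives $T^p=\lambda I$ for some non-zero $\lambda\in\C$, and the two-sided bound $\|T^{kp}\|=|\lambda|^k$ for $k\in\Z$ forces $\lambda\in\T$.

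To conclude, pick a non-zero $e_0\in\vs_\pt$ and set $e_j:=T^je_0\in\vs_{\homeo^j\pt}$ for $j=0,\dots,p-1$; these are linearly independent by Lemma~\ref{lem:orbits_eigenvalues}.(3). Their linear span $W$ is $T$-invariant (since $Te_{p-1}=\lambda e_0\in W$), $T^{-1}$-invariant (since $T^{-1}=\lambda^{-1}T^{p-1}$), and $\rep(\coeffalg)$-invariant; being finite dimensional it is closed, hence $\rep(\lone)$-invariant, and equals $\vs$ because $\rep$ is \ains. In the basis $(e_0,\dots,e_{p-1})$ the matrices of $T$ and $\rep(f)$ are precisely those defining $\rep_{\pt,\lambda}$, so the uniqueness statement in Lemma~\ref{lem:how_to_build_representations}.(1) yields $\rep\simeq\rep_{\pt,\lambda}$. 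The main obstacle I anticipate is the centrality of $T^p$: it rests crucially on the decomposition $\vs=\bigoplus_{j=0}^{p-1}\vs_{\homeo^j\pt}$, which in turn uses the openness of $\{\pt\}$ to place the relevant characteristic functions in $\coeffalg$ and thus produce the continuous projections that close up the finite sum.
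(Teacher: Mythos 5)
Your proof is correct and follows essentially the same route as the paper: the decomposition $\vs=\bigoplus_{j=0}^{p-1}\vs_{\homeo^j\pt}$ via the continuous projection onto the orbit's characteristic function, the centrality of $\rep(\delta)^p$ combined with Schur's lemma and two-sided power boundedness to get $\lambda\in\T$, and the basis $e_j=\rep(\delta)^je_0$ together with the uniqueness statement of Lemma~\ref{lem:how_to_build_representations}.(1). The only cosmetic difference is that you assemble the projection as a sum of the individual $\rep(\chi_{\homeo^j\pt})$ rather than using the characteristic function of the whole orbit at once.
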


\begin{proof}
By the second part of Theorem~\ref{thm:material_on_algebraically_irreducible_representations}, we may assume that $\vs$ is a normed space and that $\rep$ is a continuous representation. Let $p$ be the period of $\pt$, and let $\chi\in\coeffalg$ denote the characteristic function of the orbit of $\pt$. It follows from $\chi=\sum_{j=0}^{p-1} \chi_{\homeo^j \pt}$ and the second and third parts of Lemma~\ref{lem:orbits_eigenvalues} that $\rep(\chi)\vs=\bigoplus_{j=0}^{p-1}\vs_{\homeo^j \pt}$. Then clearly $\rep(\chi)\vs$ is invariant under $\coeffalg$, and the first part of Lemma~\ref{lem:orbits_eigenvalues} implies that it is invariant under $\delta$ and $\delta^{-1}$. Since $\rep(\chi)$ is a continuous projection, its range is closed, and we can now conclude from the continuity of $\rep$ that $\rep(\chi)\vs$ is invariant under $\rep(\lone)$.
Since $\rep(\chi_\pt)\neq 0$, we have $\rep(\chi)\vs\neq\{0\}$, and hence $\vs=\rep(\chi)\vs=\bigoplus_{j=0}^{p-1}\vs_{\homeo^j \pt}$.

We note that $\rep(\delta)^p$ leaves each $\vs_{\homeo^j \pt}$ invariant, as a consequence of the first part of Lemma~\ref{lem:orbits_eigenvalues}. This implies that $\rep(\delta)^p$ commutes with $\rep(\coeffalg)$. Since it obviously commutes with $\rep(\delta)$ and $\rep(\delta)^{-1}$, it commutes with $\rep(\lone)$ by continuity. Hence the first part of Theorem~\ref{thm:material_on_algebraically_irreducible_representations} implies that $\rep(\delta)^p=\lambda$ for some $\lambda\in\C$. Since $\rep(\delta)^p$ is double-sided power bounded, we must have $\lambda\in\T$. If we take $e\in\vs_\pt$ to be non-zero, then, by Lemma~\ref{lem:orbits_eigenvalues}, $\{e,\rep(\delta)e,\ldots,\rep(\delta)^{p-1}e\}$ is independent. Its closed linear span is clearly invariant under $\rep(\coeffalg)$,  $\rep(\delta)$, and $\rep(\delta)^{-1}$; by the continuity of $\rep$ it is then invariant under $\rep(\lone)$. Therefore it equals $\vs$. It is now clear from the uniqueness statement in the first part of Lemma~\ref{lem:how_to_build_representations} that $\rep$ and $\rep_{\pt,\lambda}$ are algebraically equivalent.
\end{proof}

Now assume that $\topspace$ is finite and that $\pi$ is an \ai representation of $\lone$ on a vector space $\vs$. Since $1=\sum_{\pt\in\topspace}\chi_x$, and an \ai representation is non-zero and automatically unital, there exists $\pt\in\topspace$ such that $\rep(\chi_x)\neq 0$. Hence Proposition~\ref{prop:must_be_finite_dimensional} applies. In conclusion, we have the following, as a precursor to and stepping stone for Proposition~\ref{prop:all_finite_dimensional}.

\begin{theorem}\label{thm:finite_set}
Suppose that $\topspace$ is a finite set. Then all \ai representations of $\lone$ are finite dimensional, and the collection of algebraic equivalence classes of \ai representations of $\lone$ can be identified with $\orbspace{\topspace}\times \T$ as in Theorem~\ref{thm:description_finite_dimensional_representations}. Each of the representation spaces can be supplied with a Hilbert space structure such that the representation is a \supast-representations. In particular, all primitive ideals of $\lone$ are selfadjoint.
\end{theorem}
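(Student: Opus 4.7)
The plan is to observe that once $\topspace$ is finite, the substantial work has already been done in Proposition~\ref{prop:must_be_finite_dimensional} and Theorem~\ref{thm:description_finite_dimensional_representations}, so only a short reduction is needed.

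First I would note two consequences of finiteness of $\topspace$. Being a finite compact Hausdorff space, $\topspace$ is discrete, so every singleton $\{\pt\}$ is open and its characteristic function $\chi_\pt$ lies in $\coeffalg$. Also, since orbits are subsets of a finite set, every orbit is finite, hence $\perpoints=\topspace$ and $\orbspace{\perpoints}=\orbspace{\topspace}$.

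Next, let $\rep:\lone\to\linear(\vs)$ be an \ai representation. Because $\topspace$ is finite we have the finite decomposition of the identity $1=\sum_{\pt\in\topspace}\chi_\pt$ in $\coeffalg$. An \ai representation is necessarily non-zero, and therefore unital, so $\mathrm{id}_\vs=\rep(1)=\sum_{\pt\in\topspace}\rep(\chi_\pt)$. Since $\vs\neq\{0\}$, there must exist at least one $\pt_0\in\topspace$ with $\rep(\chi_{\pt_0})\neq 0$. The singleton $\{\pt_0\}$ is open and $\pt_0\in\perpoints$, so Proposition~\ref{prop:must_be_finite_dimensional} applies and yields $\lambda\in\T$ such that $\rep$ is algebraically equivalent to $\rep_{\pt_0,\lambda}$. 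In particular $\vs$ is finite dimensional.

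Finally, the first claim of the theorem now says precisely that every \ai representation is (up to algebraic equivalence) among those classified in Theorem~\ref{thm:description_finite_dimensional_representations}. Invoking that theorem for the finite-orbit space $\orbspace{\perpoints}=\orbspace{\topspace}$ provides the bijection with $\orbspace{\topspace}\times\T$, the Hilbert space structure on each representation space making $\rep_{\pt,\lambda}$ a \supast-representation, and the consequent selfadjointness of every primitive ideal (a primitive ideal is the kernel of an \ai representation, hence the kernel of a \supast-representation on a Hilbert space after passing to an equivalent representation, and such kernels are automatically selfadjoint). There is essentially no obstacle here: the only delicate point is confirming that $\rep(\chi_{\pt_0})\neq 0$ for some $\pt_0$, which follows at once from unitality, and after that the two cited results do all the work.
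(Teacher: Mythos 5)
Your proposal is correct and follows essentially the same route as the paper: decompose the identity as $1=\sum_{\pt\in\topspace}\chi_\pt$, use that an \ai representation is non-zero and automatically unital to find $\pt_0$ with $\rep(\chi_{\pt_0})\neq 0$, apply Proposition~\ref{prop:must_be_finite_dimensional}, and then invoke Theorem~\ref{thm:description_finite_dimensional_representations} for the classification and selfadjointness statements. No gaps.
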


\subsection{Infinite dimensional representations associated with aperiodic points}\label{subsec:infinite_dimensional}
In this section, starting from an aperiodic point, we shall define representations of $\lone$ on the two-sided $\ell^p(\Z)$-spaces for all $p\in[1,\infty]$.  We shall determine the algebraic, topological and isometrical equivalences (see Propositions~\ref{prop:equivalences} and~\ref{prop:top_equivalences}), and decide the algebraic and topological irreducibility (see Theorem~\ref{thm:irreducibility_infinite_dimensional}). The kernels of these representations are closed ideals that turn out to be selfadjoint.

The representations on $\ell^1(\Z)$ are \ains. In contrast to the finite dimensional case, we do not know all infinite dimensional \ai representations up to algebraic equivalence. Later, in Theorem~\ref{thm:exhaustive}, we shall see that, if $\topspace$ is metrizable, we nevertheless know all primitive ideals corresponding to such representations.

To construct these representations on $\ell^p(\Z)$ for $\pt\in\aperpoints$, we first let $p\in[1,\infty)$ and, for $k\in\Z$, we let $e_k$ denote the element of $\ell^p(\Z)$ with 1 in the $k$th coordinate and zero elsewhere. Let $S\in\bounded(\ell^p(\Z))$ be the right shift, determined by $Se_k=e_{k+1}$ for $k\in\Z$. For $f\in\coeffalg$, let $\rep^p_\pt(f)\in\bounded(\ell^p(\Z))$ be determined by $ \rep^p_\pt(f)e_k= f(\homeo^k \pt )e_k$  for all $k\in\Z$. One can then easily see that $\rep^p_\pt$ and $S$ satisfy the requirements of Lemma~\ref{lem:how_to_build_representations}. Hence there exists a unique unital continuous representation $\rep^p_\pt:\lone\to\bounded(\ell^p(\Z))$ such that
\begin{equation}\label{eq:aperiodic_point_representation}
\rep^p_\pt\left(\sumn f_n\delta^n\right)=\sumn\rep^p_\pt(f)S^n
\end{equation}
for $\loneelement{n}\in\lone$.
Furthermore, $\rep^p_\pt$ is contractive and, for $p=2$, it is a unital \supast-representation on the Hilbert space $\ell^2(\Z)$.
For $p=\infty$, the construction is similar. We let $S$ denote the right shift on $\ell^\infty(\Z)$, and, for $f\in\coeffalg$, we let the operator $\rep^\infty_\pt(f)$ act on a two-sided sequence via multiplication with $f(\homeo^k \pt)$ in the $k$th coordinate  for all $k\in\Z$.  Again Lemma~\ref{lem:how_to_build_representations} implies that there exists a unique unital continuous representation $\rep^\infty_\pt:\lone\to\bounded(\ell^p(\Z))$
such that \eqref{eq:aperiodic_point_representation} holds for $p=\infty$. Then $\rep_\pt^\infty$ is contractive.

The question concerning equivalence of these representations for fixed $p$ is easily answered.

\begin{proposition}[Equivalences]\label{prop:equivalences}
Let $\pt,y\in\aperpoints$ and let $p\in[1,\infty]$. Then the following are equivalent:
\begin{enumerate}
\item $\rep^p_\pt$ and $\rep^p_y$ are algebraically equivalent;
\item $\rep^p_\pt$ and $\rep^p_y$ are topologically equivalent;
\item $\rep^p_\pt$ and $\rep^p_y$ are isometrically equivalent;
\item The orbits of $\pt$ and $y$ coincide.
\end{enumerate}
\end{proposition}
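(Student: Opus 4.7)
The implications $(3) \Rightarrow (2) \Rightarrow (1)$ are immediate from the definitions, so I would concentrate on $(4) \Rightarrow (3)$ and the harder direction $(1) \Rightarrow (4)$.

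To prove $(4) \Rightarrow (3)$, fix $m \in \Z$ with $y = \homeo^m \pt$, and let $U \in \bounded(\ell^p(\Z))$ be the shift defined by $(Uv)_k = v_{k+m}$, equivalently $Ue_j = e_{j-m}$ on standard basis vectors. This $U$ is an isometric bijection of $\ell^p(\Z)$ for every $p \in [1,\infty]$, with the opposite shift as inverse. A direct computation verifies that $U \rep^p_\pt(f) U^{-1} = \rep^p_y(f)$ for every $f\in\coeffalg$ (both act diagonally with the correctly shifted values $f(\homeo^k y) = f(\homeo^{k+m}\pt)$) and that $U S U^{-1} = S$. Both $U\rep^p_\pt(\cdot)U^{-1}$ and $\rep^p_y$ are then unital continuous representations of $\lone$ on the Banach space $\ell^p(\Z)$ whose restrictions to $\coeffalg$ coincide and for which $\delta$ has the same image; the uniqueness statement in Lemma~\ref{lem:how_to_build_representations}(1) therefore forces them to be equal on all of $\lone$. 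Hence $U$ implements an isometric equivalence.

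For $(1) \Rightarrow (4)$, the plan is to use the common eigenspaces for $\coeffalg$ as algebraic invariants preserved by any intertwiner. For each $z \in \topspace$, set
\[
\vs_z^{(\pt)} = \{v \in \ell^p(\Z) : \rep^p_\pt(f) v = f(z) v \text{ for all } f \in \coeffalg\},
\]
as in Lemma~\ref{lem:orbits_eigenvalues}. Since $\coeffalg$ acts diagonally, a sequence $v = (c_j)_{j \in \Z}$ lies in $\vs_z^{(\pt)}$ iff $c_j\bigl(f(\homeo^j \pt)-f(z)\bigr)=0$ for every $j \in \Z$ and every $f \in \coeffalg$. By Urysohn's lemma this forces $\homeo^j \pt = z$ whenever $c_j \neq 0$, and because $\pt$ is aperiodic at most one such $j$ can occur. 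Consequently $\vs_z^{(\pt)} = \C e_j$ when $z = \homeo^j \pt$ for some (necessarily unique) $j \in \Z$, and $\vs_z^{(\pt)} = \{0\}$ otherwise; the analogous description holds for $y$.

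Any algebraic equivalence $T$ between $\rep^p_\pt$ and $\rep^p_y$ intertwines the $\coeffalg$-actions, hence carries $\vs_z^{(\pt)}$ bijectively onto $\vs_z^{(y)}$ for each $z \in \topspace$. Comparing the sets $\{z \in \topspace : \vs_z \neq \{0\}\}$ for the two representations, which are exactly the orbits of $\pt$ and $y$, yields (4). The principal conceptual obstacle is that an algebraic equivalence need not be continuous, so no norm or topological information can be transported directly; the common-eigenspace device circumvents this entirely because those eigenspaces are defined purely algebraically and are automatically preserved by any intertwiner.
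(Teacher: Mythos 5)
Your proposal is correct and follows essentially the same route as the paper: the paper also proves $(4)\Rightarrow(3)$ by exhibiting the power $S^{-j}$ of the shift as an isometric intertwiner, and proves $(1)\Rightarrow(4)$ by observing that the common eigenspaces $\vs_z$ for the $\coeffalg$-action are non-zero (and spanned by a standard basis vector) exactly when $z$ lies in the orbit, so that any algebraic equivalence forces the orbits to coincide. You merely spell out the ``routine to check'' eigenspace computation and the appeal to the uniqueness clause of Lemma~\ref{lem:how_to_build_representations}, both of which the paper leaves implicit.
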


\begin{proof}
We prove that (1) implies (4). If $\rep:\coeffalg\to\linear(\vs)$ is a representation of $\coeffalg$ on a vector space $\vs$, then a common eigenspace for the action of $\coeffalg$ on $\vs$ is equal to $\vs_z=\{e\in \vs : \rep(f)e=f(z)e\}$ for a uniquely determined $z\in \topspace$. It is routine to check that, for the representation $\rep_x^p$, these subspaces of $\ell^p(\Z)$ are non-zero\textemdash and in fact one dimensional and spanned by a standard vector $e_k$\textemdash if and only if $z$ is in the orbit of $\pt$. Therefore, if $\rep_{\pt,\lambda}$ and $\rep_{y,\mu}$ are algebraically equivalent, then the orbits of $\pt$ and $y$ must coincide.

If (4) holds, say $y=\homeo^j \pt$ for some unique $j\in\Z$, then $S^{-j}$ is an isometric equivalence between $\rep^p_\pt$ and $\rep^p_y$. Hence (4) implies (3).

It is trivial that (3) implies (2), and that (2) implies (1).
\end{proof}

\begin{remark}\label{rem:schur}Let $p\in[1,\infty]$. Using the one dimensionality of the common eigenspaces for the action of $\coeffalg$, and considering the action of $\delta$ and $\delta^{-1}$, one sees easily that, for any $T\in\linear(\ell^p(\Z))$ commuting with the action of $\lone$, there exists $\lambda\in\C$ such that $Te_k=\lambda e_k$ for all $k\in\Z$. If $p\in[1,\infty)$, this implies that the bounded intertwining operators for $\rep_\pt^p$ are the multiples of the identity.
\end{remark}

It is natural to ask, in addition, which algebraic equivalences exist between $\rep_\pt^p$ and $\rep_{y}^r$ if $p$ and $r$ are such that $1\leq p <r\leq\infty$. As in the proof of Proposition~\ref{prop:equivalences}, the orbits of $\pt$ and $y$ must then coincide. Furthermore, as will become obvious from Theorem~\ref{thm:irreducibility_infinite_dimensional}, we must then have $1<p<r\leq\infty$, but additional information is not available at this moment. For topological equivalence, however, the picture is clear.

\begin{proposition}\label{prop:top_equivalences}
Let $\pt,y\in\aperpoints$ and let $p,r\in[1,\infty]$. Then the following are equivalent:
\begin{enumerate}
\item $\rep^p_\pt$ and $\rep^r_y$ are topologically equivalent;
\item $\rep^p_\pt$ and $\rep^r_y$ are isometrically equivalent;
\item The orbits of $\pt$ and $y$ coincide, and $p=r$.
\end{enumerate}
\end{proposition}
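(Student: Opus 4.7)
The implications (3) $\Rightarrow$ (2) and (2) $\Rightarrow$ (1) are immediate: the former follows from Proposition~\ref{prop:equivalences} (which in its proof of (4) $\Rightarrow$ (3) constructs an isometric equivalence by a shift), and the latter is trivial. I concentrate on (1) $\Rightarrow$ (3). Suppose $U\in\bounded(\ell^p(\Z),\ell^r(\Z))$ is a topological equivalence between $\rep_x^p$ and $\rep_y^r$, so $U$ is a Banach space isomorphism that intertwines both representations.

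First, the common-eigenspace argument used in the proof of Proposition~\ref{prop:equivalences} does not rely on having $p=r$: for any $p\in[1,\infty]$, the common eigenspace $\{e\in\ell^p(\Z) : \rep_x^p(f)e=f(z)e\textup{ for all }f\in\coeffalg\}$ is non-zero exactly when $z$ lies in the orbit of $x$, in which case it is one-dimensional and spanned by a standard basis vector (separating the orbit point by a continuous function via Urysohn forces all other coordinates to vanish). Applying this on both sides, $U$ induces a bijection between these families of eigenspaces, so the orbits of $x$ and $y$ must coincide. Writing $y=\homeo^j x$ and replacing $U$ by $S^{-j}U$ (the shift on $\ell^r(\Z)$ implements an isometric equivalence $\rep_y^r\to\rep_x^r$), I may assume $y=x$.

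Next, since $U$ maps the one-dimensional eigenspace of $\rep_x^p(\coeffalg)$ at $\homeo^k x$ onto the corresponding eigenspace in $\ell^r(\Z)$, there exist non-zero scalars $\lambda_k$ with $Ue_k=\lambda_k e_k$ for all $k\in\Z$. The intertwining relation $U\rep_x^p(\delta)=\rep_x^r(\delta)U$ applied to $e_k$ gives $\lambda_{k+1}e_{k+1}=\lambda_k e_{k+1}$, so $\lambda_k=\lambda$ is independent of $k$ for some $\lambda\in\C^\ast$. Now test $U$ on the finite truncations $v_N=\sum_{k=0}^{N-1}e_k$: by linearity, $Uv_N=\lambda v_N$. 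With the convention $N^{1/\infty}=1$, one has $\Vert v_N\Vert_p=N^{1/p}$ and $\Vert Uv_N\Vert_r=|\lambda|N^{1/r}$. Boundedness of $U$ forces $N^{1/r-1/p}$ to remain bounded, giving $1/r\leq 1/p$; boundedness of $U^{-1}$ (which exists and is bounded by the open mapping theorem) gives the reverse inequality. Hence $p=r$.

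The main subtlety is handling the endpoint $p=\infty$ or $r=\infty$, because then the finite linear combinations of $\{e_k\}$ are not dense in the relevant sequence space, so one cannot conclude from $Ue_k=\lambda e_k$ that $U$ equals $\lambda I$ globally (cf.\ Remark~\ref{rem:schur}, where this subtlety is explicitly noted). The truncation test $v_N$ circumvents this: only linearity, not density, is needed to determine $Uv_N$, and the resulting asymptotic comparison of the $\ell^p$- and $\ell^r$-norms of $v_N$ already rules out every case except $p=r$.
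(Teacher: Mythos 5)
Your proof is correct, and for the key step it takes a genuinely different route from the paper. The paper disposes of $p=r$ by pure Banach space geometry: a topological equivalence forces $\ell^p(\Z)$ and $\ell^r(\Z)$ to be topologically isomorphic, and then one quotes the classification result \cite[Corollary~2.1.6]{AK} (together with the non-reflexivity and non-separability of $\ell^\infty(\N)$) to conclude $p=r$. You instead exploit the rigidity of the intertwiner itself: after reducing to $y=\pt$ via a shift, the one-dimensional common eigenspaces force $Ue_k=\lambda_k e_k$, the relation with $\rep_\pt(\delta)$ forces $\lambda_k\equiv\lambda$, and the norms of the truncated vectors $v_N$ then separate the exponents directly. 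Your version is more elementary and self-contained (no appeal to Pitt-type isomorphism theory), and as a byproduct it shows that any topological equivalence between $\rep_\pt^p$ and $\rep_y^p$ must be a scalar multiple of a shift on the standard basis; the paper's version is shorter but leans on a nontrivial external result. Your remark about the $p=\infty$ endpoint is well taken and is exactly why testing on the finite vectors $v_N$, rather than arguing by density that $U=\lambda I$, is the right move. The derivation of coinciding orbits is the same eigenspace argument the paper reuses from Proposition~\ref{prop:equivalences}. One cosmetic point: whether the reduction uses $S^{j}U$ or $S^{-j}U$ depends on the sign convention in $y=\homeo^{j}\pt$, but this does not affect the argument.
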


\begin{proof}
If (1) holds, then $\rep^p_\pt$ and $\rep^r_y$ are algebraically equivalent and, as remarked preceding the proposition, the orbits of $\pt$ and $y$ must then coincide. Also, if (1) holds, then $\ell^p(\Z)$ and $\ell^r(\Z)$ are topologically isomorphic. Since each of these spaces is clearly topologically isomorphic to its one-sided version, it then follows from \cite[Corollary~2.1.6]{AK} and the non-reflexivity and non-separability of $\ell^\infty(\N)$ that we must have $p=r$. Hence (1) implies (3). From Proposition~\ref{prop:equivalences}, we see that (3) implies (2), and trivially (2) implies (1).
\end{proof}

We shall now investigate the algebraic and topological irreducibility of the representations $\rep^p_\pt$ for $\pt\in\aperpoints$ and $p\in[1,\infty]$. Theorem~\ref{thm:irreducibility_infinite_dimensional} gives a complete answer.

The hardest part is the algebraic irreducibility of $\rep^1_\pt$, which we now take up. For the proof we need two lemmas, where the second builds on the first.

\begin{lemma}\label{lem:first_preparation}
Let $\pt\in\aperpoints$, and suppose that $\rho=\sumn \lambda_n e_n\in\ell^1(\Z)$ with $\lambda_0=1$. If $\tau\in\ell^1(\Z)$
and $\varepsilon>0$ are given, then there exists $a\in\lone$ such that  $\left\Vert
\tau - \rep^1_\pt  (a)\rho\right\Vert < \varepsilon$ and $\Vert a\Vert \leq
\Vert \tau \Vert$.
\end {lemma}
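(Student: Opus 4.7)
The strategy is to realize $\tau$ (approximately) as $\rep^1_\pt(a)\rho$ by constructing $a$ as a finite sum $a = \sum_{\vert n\vert\leq N} \tau_n g_n\delta^n$, where the functions $g_n\in\coeffalg$ are chosen, using aperiodicity and Urysohn's lemma, to act as approximate ``indicator functions'' on the relevant portion of the orbit of $\pt$. The guiding computation is that, for $a=\sum_n f_n\delta^n$, one has
\[
\bigl(\rep^1_\pt(a)\rho\bigr)_m = \sum_n \lambda_{m-n}\,f_n(\homeo^m \pt),
\]
so if we could arrange $f_n(\homeo^m\pt)=\delta_{n,m}\tau_n$, the hypothesis $\lambda_0=1$ would give $(\rep^1_\pt(a)\rho)_m=\tau_m$ exactly.

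First I would fix $N$ large enough that $\sum_{\vert n\vert>N}\vert\tau_n\vert<\varepsilon/2$, and then fix $M\geq N$ large enough that $(\sum_{\vert k\vert>M-N}\vert\lambda_k\vert)\cdot\Vert\tau\Vert<\varepsilon/2$. Since $\pt\in\aperpoints$, the points $\{\homeo^m \pt : \vert m\vert\leq M\}$ are pairwise distinct in the compact Hausdorff space $\topspace$. By Urysohn's lemma, for each $n$ with $\vert n\vert\leq N$ I can choose $g_n\in\coeffalg$ with $0\leq g_n\leq 1$, $g_n(\homeo^n\pt)=1$, and $g_n(\homeo^m\pt)=0$ for all $m$ with $\vert m\vert\leq M$ and $m\neq n$. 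Then setting $a=\sum_{\vert n\vert\leq N}\tau_n\,g_n\,\delta^n$ gives $\Vert a\Vert=\sum_{\vert n\vert\leq N}\vert\tau_n\vert\,\Vert g_n\Vert\leq\sum_n\vert\tau_n\vert=\Vert\tau\Vert$, which settles the norm constraint immediately.

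Next I would verify the approximation estimate. For $\vert m\vert\leq N$, the separation property of $g_n$ on $\{\homeo^m\pt:\vert m\vert\leq M\}$ collapses the sum above to $\lambda_0\tau_m=\tau_m$, so these coordinates contribute zero to $\Vert\tau-\rep^1_\pt(a)\rho\Vert$. For $N<\vert m\vert\leq M$ the same separation gives $(\rep^1_\pt(a)\rho)_m=0$, contributing $\sum_{N<\vert m\vert\leq M}\vert\tau_m\vert$. For $\vert m\vert>M$ one has no control over $g_n(\homeo^m\pt)\in[0,1]$, but one can bound the contribution crudely by $\sum_{\vert m\vert>M}\vert\tau_m\vert+\sum_{\vert m\vert>M}\sum_{\vert n\vert\leq N}\vert\lambda_{m-n}\vert\,\vert\tau_n\vert$. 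Combining the three ranges and interchanging the order of summation in the last term, $\Vert\tau-\rep^1_\pt(a)\rho\Vert$ is bounded by $\sum_{\vert m\vert>N}\vert\tau_m\vert+(\sum_{\vert k\vert>M-N}\vert\lambda_k\vert)\Vert\tau\Vert$, which by the choice of $N$ and $M$ is less than $\varepsilon$.

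The main obstacle\textemdash and the only place where aperiodicity is genuinely used\textemdash is the construction of the separating functions $g_n$; if $\pt$ were periodic, a sufficiently long orbit segment would contain repetitions and Urysohn would no longer produce functions with the required Kronecker behavior on $\{\homeo^m\pt:\vert m\vert\leq M\}$. Everything else is careful bookkeeping: balancing $N$ (to kill the tail of $\tau$), then $M$ (to kill the tail of $\rho$ against the ``garbage'' coordinates $\vert m\vert>M$), and finally invoking Urysohn only after both $N$ and $M$ have been fixed.
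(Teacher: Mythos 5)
Your proof is correct and follows essentially the same route as the paper's: truncate $\tau$ and $\rho$, use aperiodicity together with Urysohn's lemma to produce functions that separate the relevant orbit points, and take $a$ to be a finite sum $\sum_{|n|\leq N}\tau_n g_n\delta^n$ (the paper writes what is in effect the same element in factored form, $a=(\sum_{|n|\leq N_2}\mu_n\delta^n)\cdot f$ with a single bump function $f$ at $\pt$, whose expansion has coefficient functions $\mu_n\,(f\circ\homeo^{-n})$). The only substantive difference is bookkeeping: the paper handles the tail of $\rho$ via contractivity, $\Vert\rep^1_\pt(a)(\rho-\rho_{N_1})\Vert\leq\Vert a\Vert\,\Vert\rho-\rho_{N_1}\Vert$, which avoids your coordinate-by-coordinate estimate on the range $|m|>M$, but both arguments are sound.
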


\begin{proof}
Let $\tau = \sumn \mu_n e_n$. For every integer $N\geq 0$, let $\rho_N
= \sum_{\vert n \vert \leq N}\lambda_n e_n$ and let $\tau _N = \sum_{\vert
n \vert \leq N}\mu_n e_n$. If $\varepsilon>0$ is given, choose $N_1,N_2\geq
0$ such that $\Vert \tau \Vert\Vert {\rho - \rho_{N_1}}\Vert + \Vert {\tau
- \tau_{N_2}}\Vert   < \varepsilon $. Next, choose $f\in\coeffalg$ such that
$\Vert f\Vert=1$, $f(\pt ) = 1$, and $f(\homeo^j \pt ) = 0$ for all $j$ such that $0<\vert j
\vert \leq N_1$. Since the points $\homeo^j \pt$ for $j=0,\ldots,N_1$ are all
different, this is indeed possible. We then have
$$
\rep^1_\pt(f)\rho_{N_1} = \sum_{\vert n \vert\leq N_1}\lambda_n f(\homeo^n
\pt )e_n = e_0.
$$
Let
$$
a = \left(\sum_{\vert n \vert \leq N_2}\mu_n \delta^n\right)\cdot f;
$$
then
$$
\rep^1_\pt(a)\rho_{N_1}= \rep^1_\pt\left(\left(\sum_{\vert n \vert \leq N_2 }\mu
_n \delta^n\right)\cdot f\right)\rho _{N_1} = \rep^1_\pt\left(\sum_{\vert n
\vert \leq N_2}\mu_n\delta^n\right) e_0 = \tau_{N_2}.
$$
Furthermore,
$$
\Vert a \Vert \leq \left\Vert \sum_{\vert n \vert \leq N_2}\mu_n \delta^n
\right\Vert\Vert f\Vert = \sum _{\vert n \vert \leq N_2}\vert \mu_n \vert
\leq \Vert \tau \Vert,
$$
and
\begin{align*}
\Vert \rep^1_\pt(a)\rho - \tau \Vert &\leq \Vert \rep^1_\pt(a)(\rho - \rho_{N_1})\Vert
+ \Vert \rep^1_\pt(a) \rho _{N_1} - \tau_{N_2}\Vert + \Vert \tau_{N_2} - \tau
\Vert
\\
& \leq \Vert a\Vert \Vert \rho - \rho_{N_1}\Vert + \Vert \tau_{N_2} - \tau
\Vert \leq \Vert \tau \Vert \Vert \rho - \rho_{N_1}\Vert + \Vert \tau _{N_2}
- \tau \Vert
\\
& < \epsilon.
\end{align*}
Hence $a$ meets all requirements.
\end{proof}

\begin{lemma}\label{lem:second_preparation}
Let $\pt\in\aperpoints$, and suppose that $\rho=\sumn \lambda_n e_n\in\ell^1(\Z)$ with $\lambda_0=1$. If $\tau\in\ell^1(\Z)$
and $\varepsilon>0$ are given, then there exists $a\in\lone$ such that $\rep^1_\pt(a)\rho
= \tau$ and $\Vert a\Vert \leq (1 + \varepsilon) \Vert \tau \Vert$.
\end{lemma}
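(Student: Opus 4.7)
The plan is a standard ``geometric series bootstrap'' that turns the approximation in Lemma~\ref{lem:first_preparation} into an exact equality, at the cost of inflating the norm bound from $\Vert\tau\Vert$ to $(1+\varepsilon)\Vert\tau\Vert$. First, I would fix a parameter $\eta\in(0,1)$ with $\eta\leq\varepsilon/(1+\varepsilon)$, so that $1/(1-\eta)\leq 1+\varepsilon$.

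Next, I would construct inductively sequences $(\tau_k)\subset\ell^1(\Z)$ and $(a_k)\subset\lone$ starting from $\tau_0=\tau$, as follows. Given $\tau_k$, if $\tau_k=0$ set $a_k=0$; otherwise apply Lemma~\ref{lem:first_preparation} to the pair $(\rho,\tau_k)$ with tolerance $\eta\Vert\tau_k\Vert$ to obtain $a_k\in\lone$ satisfying
\[
\Vert\rep^1_\pt(a_k)\rho-\tau_k\Vert<\eta\Vert\tau_k\Vert,\qquad \Vert a_k\Vert\leq\Vert\tau_k\Vert.
\]
Then define $\tau_{k+1}:=\tau_k-\rep^1_\pt(a_k)\rho$, so that $\Vert\tau_{k+1}\Vert\leq\eta\Vert\tau_k\Vert$. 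A trivial induction gives $\Vert\tau_k\Vert\leq\eta^k\Vert\tau\Vert$ and consequently $\Vert a_k\Vert\leq\eta^k\Vert\tau\Vert$ for every $k\geq 0$.

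Finally, I would set $a:=\sum_{k=0}^\infty a_k$, which converges absolutely in the Banach algebra $\lone$ since
\[
\sum_{k=0}^\infty\Vert a_k\Vert\leq\frac{\Vert\tau\Vert}{1-\eta}\leq(1+\varepsilon)\Vert\tau\Vert,
\]
yielding the required norm bound on $a$. Using the contractivity (in particular continuity) of $\rep^1_\pt$ together with the telescoping identity $\sum_{k=0}^N \rep^1_\pt(a_k)\rho=\tau_0-\tau_{N+1}$ and $\tau_{N+1}\to 0$ in $\ell^1(\Z)$, I would conclude that $\rep^1_\pt(a)\rho=\tau$, completing the proof.

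There is no real obstacle here; all the substance was already invested in Lemma~\ref{lem:first_preparation}, whose proof used the aperiodicity of $\pt$ to separate the finitely many orbit points $\{\homeo^j\pt:|j|\leq N_1\}$ by a norm-one $f\in\coeffalg$. Once that one-step approximation is available with the correct norm control $\Vert a\Vert\leq\Vert\tau\Vert$ (crucially, independent of the tolerance $\varepsilon$), the geometric-series iteration above delivers the exact equality automatically.
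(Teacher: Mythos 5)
Your proposal is correct and is essentially the paper's own argument: both iterate Lemma~\ref{lem:first_preparation} on successive residuals with geometrically shrinking tolerances, sum the resulting absolutely convergent series in $\lone$, and use continuity of $\rep^1_\pt$ to pass from the telescoping approximation to the exact identity $\rep^1_\pt(a)\rho=\tau$. The only cosmetic difference is that you fix the ratio $\eta$ in advance so that $1/(1-\eta)\leq 1+\varepsilon$, whereas the paper runs the construction for an arbitrary $\gamma\in(0,1)$ and takes $\gamma$ small at the end.
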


\begin{proof}
We may assume that $\tau\neq 0$. Fix $\gamma$ such that $0 < \gamma < 1$. We claim that there exist
$a_1,a_2,\ldots\in\lone$ such that
\begin{equation}\label{eq:approximation}
\left \Vert \tau - \rep^1_\pt \left( \sum _{j=1}^N a_j\right)\rho \right
\Vert < \gamma^N \Vert \tau \Vert
\end{equation}
for all $N\geq 1$ and $\Vert a_j\Vert\leq\gamma^{j-1}\Vert\tau\Vert$ for
all $j\geq 1$.
 To see this, we use an inductive construction. First apply Lemma~\ref{lem:first_preparation}
with $\varepsilon=\gamma\Vert\tau\Vert>0$ to find $a_1\in\lone $ such that
$$
\Vert \tau - \rep^1_\pt(a_1)\rho \Vert < \gamma \Vert \tau \Vert
$$
and $\Vert a_1 \Vert \leq \Vert \tau \Vert$. Let $N\geq 1$, and assume that
$a_1,a_2,\ldots,a_N\in\lone$ have already been chosen such that
$$
\left \Vert \tau - \rep^1_\pt \left( \sum _{j=1}^n a_j\right)\rho \right
\Vert < \gamma^n \Vert \tau \Vert
$$
for all $n$ such that $1\leq n\leq N$, and $\Vert a_j\Vert\leq\gamma^{j-1}\Vert\tau\Vert$
for all $j$ such that $1\leq j\leq N$.
We apply Lemma~\ref{lem:first_preparation} again with $\varepsilon=\gamma^{N+1}\Vert\tau\Vert>0$,
and find $a_{N + 1}\in\lone$ such that
$$
\left\Vert \left(\tau - \rep_\pt^1\left(\sum_{j=1}^N a_j\right)\rho\right)  -
\rep^1_\pt(a_{N + 1})\rho \right\Vert < \gamma^{N + 1} \Vert \tau \Vert
$$
and
$$
\Vert a_{N + 1} \Vert \leq \left\Vert \tau - \rep^1_\pt \left(\sum_{j=1}^N
a_j \right)\rho \right\Vert < \gamma^N \Vert \tau \Vert.
$$
This completes the inductive step in the construction and establishes our
claim.

If we let $ a = \sum_{j=1}^{\infty} a_j\in\lone $, then
$$
\Vert a \Vert \leq \sum_{j=1}^{\infty} \gamma^{j - 1} \Vert \tau \Vert =
\frac{1}{1 - \gamma} \Vert \tau \Vert.
$$
Furthermore, if we let $N\to\infty$ in \eqref{eq:approximation}, we see that
$$
\rep^1_\pt(a) \rho = \tau.
$$
Since this can be done for any $\gamma$ such that $0<\gamma<1$, the proof is complete.
\end{proof}

\begin{proposition}\label{prop:one_algebraically_irreducible}
Let $\pt\in\aperpoints$. Then the representation $\rep^1_\pt$ of $\lone$ on $\ell^1(\Z)$ is \ains. In fact, for any $\rho = \sumn \lambda_n e_n\in\ell^1(\Z)$ such that $\rho\neq 0$, $\tau\in\ell^1(\Z)$, and $\varepsilon>0$, there exists $a\in\lone$ such that $\rep_\pt(a)\rho = \tau$ and
$$
\Vert a \Vert \leq \frac{(1+\varepsilon)}{\max_{n} \vert \lambda_n \vert}\Vert
\tau \Vert.
$$
\end{proposition}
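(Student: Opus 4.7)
My plan is that the quantitative transitivity statement reduces essentially immediately to Lemma~\ref{lem:second_preparation}, and that algebraic irreducibility then follows as a formal consequence. The work has been done in the two preparatory lemmas; what remains is only a normalization and a shift.

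First I would observe that, because $\rho \in \ell^1(\Z)$ is non-zero, the coefficients $\lambda_n$ tend to $0$ while not all vanish, so there exists some $n_0 \in \Z$ with $|\lambda_{n_0}| = \max_n |\lambda_n| > 0$. Then I would normalize and translate: set
\[
\rho' = \lambda_{n_0}^{-1}\, S^{-n_0}\rho = \lambda_{n_0}^{-1}\,\rep^1_\pt(\delta^{-n_0})\rho,
\]
so that the $e_0$-coefficient of $\rho'$ equals $1$, which is precisely the normalization required by Lemma~\ref{lem:second_preparation}.

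Applying Lemma~\ref{lem:second_preparation} to $\rho'$ and the given $\tau$ with the given $\varepsilon > 0$ yields $b \in \lone$ such that $\rep^1_\pt(b)\rho' = \tau$ and $\|b\| \leq (1+\varepsilon)\|\tau\|$. I would then set $a = \lambda_{n_0}^{-1}\, b\, \delta^{-n_0} \in \lone$. Since $\rep^1_\pt$ is a homomorphism,
\[
\rep^1_\pt(a)\rho = \lambda_{n_0}^{-1}\,\rep^1_\pt(b)\,\rep^1_\pt(\delta^{-n_0})\rho = \rep^1_\pt(b)\rho' = \tau,
\]
and submultiplicativity together with $\|\delta^{-n_0}\| = 1$ gives
\[
\|a\| \leq \frac{\|b\|}{|\lambda_{n_0}|} \leq \frac{(1+\varepsilon)\,\|\tau\|}{\max_n |\lambda_n|},
\]
which is the claimed estimate.

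For algebraic irreducibility, the quantitative statement (e.g.\ with $\varepsilon = 1$) shows that for every non-zero $\rho \in \ell^1(\Z)$ one has $\rep^1_\pt(\lone)\rho = \ell^1(\Z)$, so no proper non-zero subspace of $\ell^1(\Z)$ is $\rep^1_\pt(\lone)$-invariant; and $\rep^1_\pt(\lone)\ell^1(\Z) \neq \{0\}$ since $\rep^1_\pt$ is unital. I do not anticipate any genuine obstacle, since all the analytic work (the iterative construction using an aperiodic point to separate finitely many orbit points by functions in $\coeffalg$) has already been carried out in Lemmas~\ref{lem:first_preparation} and~\ref{lem:second_preparation}; the only step to check carefully is the bookkeeping on the norm bound after absorbing the factor $\lambda_{n_0}^{-1}$ and the shift $\delta^{-n_0}$.
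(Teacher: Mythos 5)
Your proposal is correct and follows essentially the same route as the paper: normalize and shift $\rho$ by $\lambda_{n_0}^{-1}\delta^{-n_0}$ so that Lemma~\ref{lem:second_preparation} applies, then absorb that factor into the element produced by the lemma and track the norm. The only (harmless) addition is your explicit spelling-out of how the quantitative transitivity statement yields algebraic irreducibility, which the paper leaves implicit.
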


\begin{proof}
Suppose that $\vert \lambda_{n_0} \vert = \max_n \vert \lambda_n \vert\neq 0$.
Then $\rep^1_\pt(\lambda_{n_0}^{-1} \delta ^{- n_0})\rho $ satisfies the condition
for $\rho$ in Lemma~\ref{lem:second_preparation}. Hence there exists $a'\in\lone$
such that
$$
\rep^1_\pt(a')\left(\rep^1_\pt (\lambda_{n_0}^{-1} \delta^{- n_0})\right) \rho
= \tau
$$
and
$$
\Vert a' \Vert \leq ( 1 + \varepsilon) \Vert \tau \Vert.
$$
Then $a = a'\cdot\lambda_{n_0}^{-1} \delta^{- n_{0}}$ has the required properties.
\end{proof}

The algebraic irreducibility for other cases is easily dealt with.

\begin{lemma}\label{lem:p_algebraically_reducible}
Let $\pt\in\aperpoints$ and let $p\in(1,\infty]$. Then the representation $\rep^p_\pt$ of $\lone$ on $\ell^p(\Z)$ is not algebraically irreducible. In fact, $\rep^p_\pt(\lone )e_0$ is a proper invariant subspace.
\end{lemma}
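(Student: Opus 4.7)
The approach is to show that the $\lone$-orbit of $e_0$ is contained in $\ell^1(\Z)$, after which properness in $\ell^p(\Z)$ for $p\in(1,\infty]$ is immediate from the strict inclusion $\ell^1(\Z)\subsetneq\ell^p(\Z)$ in this range. Invariance and non-triviality of the orbit are essentially automatic.

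The computation is a direct application of the defining formula~\eqref{eq:aperiodic_point_representation}. For $a=\sumn f_n\delta^n\in\lone$, the continuity of $\rep^p_\pt$ together with $\rep^p_\pt(\delta^n)e_0=S^n e_0=e_n$ and $\rep^p_\pt(f_n)e_n=f_n(\homeo^n\pt)e_n$ yields
\[
\rep^p_\pt(a)e_0=\sumn f_n(\homeo^n\pt)\,e_n.
\]
Since $|f_n(\homeo^n\pt)|\leq\Vert f_n\Vert$ for every $n$, the coefficient sequence satisfies $\sumn|f_n(\homeo^n\pt)|\leq\sumn\Vert f_n\Vert=\Vert a\Vert<\infty$. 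In particular $\rep^p_\pt(a)e_0\in\ell^1(\Z)$, and consequently $\rep^p_\pt(\lone)e_0\subset\ell^1(\Z)$.

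For $p\in(1,\infty]$ the inclusion $\ell^1(\Z)\subsetneq\ell^p(\Z)$ is strict: for $p\in(1,\infty)$ the two-sided sequence $(1/(|n|+1))_{n\in\Z}$ lies in $\ell^p(\Z)\setminus\ell^1(\Z)$, and for $p=\infty$ the constant sequence $1$ does the job. Hence $\rep^p_\pt(\lone)e_0$ is a proper subspace of $\ell^p(\Z)$. It is trivially invariant under $\rep^p_\pt(\lone)$, and contains $e_0=\rep^p_\pt(1)e_0$, so it is non-zero. Therefore $\rep^p_\pt$ is not \ains. The whole argument is essentially a one-line computation with no real obstacle; the only point worth mentioning is that the summability of the $\Vert f_n\Vert$ is precisely what makes the $\ell^1$-containment fall out for free, so aperiodicity of $\pt$ plays no role here (in sharp contrast to Proposition~\ref{prop:one_algebraically_irreducible}).
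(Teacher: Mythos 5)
Your proof is correct and follows essentially the same route as the paper, which simply observes that $\rep^p_\pt(\lone)e_0$ equals the canonical copy of $\ell^1(\Z)$ inside $\ell^p(\Z)$ and is therefore proper for $p\in(1,\infty]$; you have merely written out the ``moment's thought'' and used only the inclusion $\rep^p_\pt(\lone)e_0\subset\ell^1(\Z)$, which suffices for the statement. (The paper's stronger claim of equality is used again later in the proof of Proposition~\ref{prop:topological_irreducibility}, but it is not needed here.)
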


\begin{proof}
A moment's thought shows that $\rep^p_\pt(\lone)e_0=\ell^1(\Z)$, viewed canonically as a subspace of $\ell^p(\Z)$. For $p\in(1,\infty]$ this is indeed a proper subspace.
\end{proof}

Topological irreducibility is settled as follows.

\begin{proposition}\label{prop:topological_irreducibility}
Let $\pt\in\aperpoints$. For $p\in[1,\infty)$, the representation $\rep^p_\pt$ of $\lone$ on $\ell^p(\Z)$ is topologically irreducible. For $p=\infty$, it is not topologically irreducible. In fact,  $\overline{\rep^\infty_\pt(\lone)e_0}$ is a proper closed invariant subspace of $\ell^\infty(\Z)$.
\end{proposition}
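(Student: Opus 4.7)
The plan is to treat the two cases separately, with aperiodicity entering essentially only in the first case.

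\smallskip

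\noindent\textbf{Case $p\in[1,\infty)$.} I would show directly that $\overline{\rep^p_\pt(\lone)v}=\ell^p(\Z)$ for every non-zero $v\in\ell^p(\Z)$. Write $v=\sumn\lambda_n e_n$ and fix $n_0$ with $\lambda_{n_0}\neq 0$. Since $\pt$ is aperiodic, the points $\homeo^n\pt$ for $|n|\leq N$ are pairwise distinct. Hence, by Urysohn's lemma, for each $N\geq |n_0|$ there exists $f_N\in\coeffalg$ with $\Vert f_N\Vert\leq |\lambda_{n_0}|^{-1}$, $f_N(\homeo^{n_0}\pt)=\lambda_{n_0}^{-1}$, and $f_N(\homeo^n\pt)=0$ for $|n|\leq N$, $n\neq n_0$. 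Letting $v_N=\sum_{|n|\leq N}\lambda_n e_n$ denote the truncation, a direct computation gives $\rep^p_\pt(f_N)v_N=e_{n_0}$, while contractivity of $\rep^p_\pt$ yields
\[
\bigl\Vert \rep^p_\pt(f_N)v-e_{n_0}\bigr\Vert_p=\bigl\Vert\rep^p_\pt(f_N)(v-v_N)\bigr\Vert_p\leq |\lambda_{n_0}|^{-1}\Vert v-v_N\Vert_p\longrightarrow 0
\]
as $N\to\infty$ (this is where $p<\infty$ is used). Thus $e_{n_0}\in\overline{\rep^p_\pt(\lone)v}$. Since this closure is invariant under $\rep^p_\pt(\delta)^k=S^k$ for every $k\in\Z$, every standard basis vector $e_k$ lies in it, and their linear span is dense in $\ell^p(\Z)$ for $p<\infty$.

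\smallskip

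\noindent\textbf{Case $p=\infty$.} For $a=\loneelement{n}\in\lone$, a straightforward computation using $S^n e_0=e_n$ gives
\[
\rep^\infty_\pt(a)e_0=\sumn f_n(\homeo^n\pt)\,e_n,
\]
so $\rep^\infty_\pt(a)e_0$ always belongs to $\ell^1(\Z)$, viewed canonically inside $\ell^\infty(\Z)$. Conversely, by taking each $f_n$ to be the constant function equal to a prescribed scalar $c_n$ with $\sumn|c_n|<\infty$, every element of $\ell^1(\Z)$ arises this way; hence $\rep^\infty_\pt(\lone)e_0=\ell^1(\Z)$. The closure of $\ell^1(\Z)$ in the $\Vert\cdot\Vert_\infty$-norm is $c_0(\Z)$, which is a proper closed subspace of $\ell^\infty(\Z)$ (for example, the constant sequence $\mathbf 1$ is not in it). Invariance of the closure follows from the continuity of $\rep^\infty_\pt$ together with invariance of $\rep^\infty_\pt(\lone)e_0$ itself.

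\smallskip

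The only non-routine point is the $p<\infty$ case, where one must produce the basis vectors from an arbitrary non-zero $v$. The crux is the Urysohn step, which requires precisely the distinctness of the orbit points, i.e.\ the aperiodicity of $\pt$; everything else is truncation and contractivity. The $p=\infty$ case is just the computation of $\rep^\infty_\pt(\lone)e_0$ and the standard fact that $\ell^1$ is $\Vert\cdot\Vert_\infty$-dense in $c_0$ but not in $\ell^\infty$.
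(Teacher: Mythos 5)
Your proof is correct, and for $p\in[1,\infty)$ it is essentially the paper's argument: aperiodicity plus Urysohn's lemma produces a function killing all nearby orbit points except one, truncation and contractivity then put a standard basis vector into the closed invariant subspace, and shifts plus density of the finitely supported sequences finish the job. The only genuine difference is in the $p=\infty$ case: the paper shows $\rep^\infty_\pt(\lone)e_0=\ell^1(\Z)$ is not dense by a separability argument (a continuous image of the separable space $\ell^1(\Z)$ cannot be dense in the non-separable space $\ell^\infty(\Z)$), whereas you identify the closure explicitly as $c_0(\Z)$; your version is slightly more elementary and yields the sharper conclusion that the proper closed invariant subspace in question is exactly $c_0(\Z)$.
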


\begin{proof}
Assume that $p\in[1,\infty)$ and that $L$ is a non-zero closed invariant subspace of $\ell^p(\Z)$. Let $\rho=\sumn \lambda_n e_n\in L$ be non-zero. After applying $\rep^p_\pt(\delta)$ or its inverse a number of times, followed by scaling, we may assume that $\lambda_0=1$. Let $\varepsilon>0$, and choose $N$ such that $\sum_{|n|>N}|\lambda_n|^p<\varepsilon^p$. Since the $2N+1$ points $\homeo^j \pt$ for $|j|\leq N$ are all different, we can choose $f\in\coeffalg$ such that $\Vert f\Vert=1$, $f(\pt)=1$, and $f(\homeo^j \pt)=0$ for all $j$ such that $1\leq |j|\leq N$. Then
\begin{align*}
\left\Vert \rep^p_\pt(f)\rho-e_0\right\Vert&=\left\Vert\sum_{|n|>N}f(\homeo^n \pt)\lambda_n e_n\right\Vert\\
&=\left(\sum_{|n|>N}\left|f(\homeo^n \pt)\lambda_n\right|^p\right)^{1/p}\\
&\leq \left(\sum_{|n|>N}\left|\lambda_n\right|^p\right)^{1/p}\\
&<\varepsilon.
\end{align*}
Since $L$ is closed, this implies that $e_0\in L$. But then also $\rep^p_\pt(\lone)e_0\subset L$. As already observed earlier, $\rep^p_\pt(\lone)e_0$ equals the canonical copy of $\ell^1(\Z)$ in $\ell^p(\Z)$. Since this copy is clearly dense in $\ell^p(\Z)$ for $p\in[1,\infty)$, we must have $L=\ell^p(\Z)$.

If $p=\infty$, then $\rep^\infty_\pt(\lone)e_0$ is not dense in $\ell^\infty(\Z)$. Indeed, this is the image of $\ell^1(\Z)$ under the canonical inclusion map; since this inclusion is continuous and $\ell^1(\Z)$ is separable, the density of the image of $\ell^1(\Z)$ would then imply that $\ell^\infty(\Z)$ is separable.
\end{proof}

We summarise our results on irreducibility in the following theorem, including Remark~\ref{rem:schur}.

\begin{theorem}[Irreducibility]\label{thm:irreducibility_infinite_dimensional}
Let $\pt\in\aperpoints$ and let $p\in[1,\infty]$. Then the unital contractive representation $\rep^p_\pt:\lone\to\bounded(\ell^p(\Z))$ is:
\begin{enumerate}
\item \Ai if $p=1$;
\item Not algebraically irreducible, but still topologically irreducible, if $p\in(1,\infty)$;
\item Not topologically irreducible if $p=\infty$.
\end{enumerate}
If $p\in[1,\infty)$, then the commutant of $\rep^p_\pt(\lone)$ in $\bounded(\ell^p(\Z))$ equals the multiples of the identity operator.
\end{theorem}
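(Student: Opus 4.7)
The statement is a summary theorem whose components have all been established in the preceding propositions and lemma of this subsection, so my plan is simply to assemble them in the appropriate order rather than to prove anything new.

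For part (1), I would cite Proposition~\ref{prop:one_algebraically_irreducible} directly: it provides the algebraic irreducibility of $\rep^1_\pt$ (together with a quantitative norm estimate on the element $a\in\lone$ witnessing $\rep^1_\pt(a)\rho=\tau$). For part (2), the failure of algebraic irreducibility when $p\in(1,\infty)$ is exactly Lemma~\ref{lem:p_algebraically_reducible}, which exhibits $\rep^p_\pt(\lone)e_0$, i.e.\ the canonical copy of $\ell^1(\Z)$ inside $\ell^p(\Z)$, as a proper invariant subspace; the topological irreducibility for $p\in(1,\infty)$ follows because Proposition~\ref{prop:topological_irreducibility} establishes topological irreducibility for every $p\in[1,\infty)$. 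For part (3), Proposition~\ref{prop:topological_irreducibility} again contains exactly the needed statement that $\overline{\rep^\infty_\pt(\lone)e_0}$ is a proper closed invariant subspace of $\ell^\infty(\Z)$ (the obstruction being the separability of the image against the non-separability of $\ell^\infty(\Z)$).

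For the final clause about the commutant when $p\in[1,\infty)$, I would invoke Remark~\ref{rem:schur}: the one-dimensionality of the joint eigenspaces $\ell^p(\Z)_{\homeo^k\pt}=\C e_k$ for the action of $\rep^p_\pt(\coeffalg)$, combined with the transitivity of $\rep^p_\pt(\delta)$ between them, forces any $T\in\linear(\ell^p(\Z))$ intertwining $\rep^p_\pt$ to act as the same scalar $\lambda$ on each $e_k$. For $p\in[1,\infty)$ the span of the $e_k$ is dense, so a bounded such $T$ must equal $\lambda\cdot\textrm{id}$. Alternatively, for $p=1$ one can simply appeal to Theorem~\ref{thm:material_on_algebraically_irreducible_representations}(1) via the algebraic irreducibility just invoked.

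The main obstacle, such as it is, is purely bookkeeping: making sure each clause is matched with the correct antecedent result and that the commutant assertion is not mistakenly claimed for $p=\infty$, where topological irreducibility fails and the argument of Remark~\ref{rem:schur} breaks down at the density step. No new mathematical content is required.
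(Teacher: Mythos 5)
Your proposal is correct and matches the paper exactly: the theorem is stated as a summary, and the paper assembles it from Proposition~\ref{prop:one_algebraically_irreducible}, Lemma~\ref{lem:p_algebraically_reducible}, Proposition~\ref{prop:topological_irreducibility}, and Remark~\ref{rem:schur}, just as you do. Your added observation about why the commutant claim must be restricted to $p\in[1,\infty)$ (density of the span of the $e_k$) is consistent with the paper's reasoning.
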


\begin{remark}\label{rem:not_on_Hilbert_space}\quad
\begin{enumerate}
\item  In the algebraically irreducible case $p=1$, the statement on bounded intertwining operators is also clear from the first part of Theorem~\ref{thm:material_on_algebraically_irreducible_representations}, since then even the commutant of $\rep^1_\pt(\lone)$ in $\linear(\ell^1(\Z))$ consists of the scalars. For $p=2$ it follows from topological irreducibility and Schur's Lemma, but for $p$ such that $1<p<2$ or $2<p<\infty$ there do not seem to be alternative general approaches.
\item As a consequence of (2) and (3) in Theorem~\ref{thm:material_on_algebraically_irreducible_representations}, every \ai representation of a Banach algebra comes naturally with the topological isomorphism class of all Banach spaces in which the representation can be realised as a normed representation. Therefore, even though the corresponding primitive ideal is selfadjoint according to Lemma~\ref{lem:infinite_orbit_ideal} below, the \ai representation $\rep^1_\pt$  for $\pt\in\aperpoints$ is not algebraically equivalent to a normed representation on a Hilbert space\textemdash let alone to a \supast-representation on such a space\textemdash since a reflexive space is not topologically isomorphic to $\ell^1(\Z)$.
\end{enumerate}
\end{remark}

\begin{remark}
The representations as associated with aperiodic points in this section are members of a more general class of (usually) infinite dimensional representations of $\lone$, namely the representations associated with invariant measures. If $\mu$ is a $\homeo$-invariant Borel measure on $\topspace$, then, for $p\in[1,\infty]$, there is a natural representation $\pi^p_\mu$ of $\lone$ on $L^p(\topspace,\mu)$, in which $\coeffalg$ acts as multiplication operators, and in which $(\pi_\mu^p(\delta)f)(x)=f(\homeo^{-1}x)$ for $f\in L^p(\topspace,\mu)$ and $\pt\in\topspace$. If $\mu$ is the counting measure corresponding to the orbit of an aperiodic point $x\in\topspace$, then, for $p\in[1,\infty]$, the representation $\pi_\mu^p$ is isometrically equivalent to the representation $\pi_x^p$. Quite in contrast to Theorem~\ref{thm:irreducibility_infinite_dimensional}, it is presently unknown under which necessary and sufficient conditions these general representations $\rep_\mu^p$ are topologically or algebraically irreducible. If $\mu$ is finite and $\pi_\mu^p$ is topologically irreducible, then $\mu$ must be ergodic, but the converse is still open. The representations $\pi_x^1$ in the present section are the only presently known examples of algebraically irreducible representations of $\lone$.

For metrizable $\topspace$, a thorough study of topologically irreducible \supast-representations of $\lone$ on Hilbert spaces and their algebraic (ir)reducibility is made in \cite{KishiTomi}, where these representations are related to ergodic $\homeo$-quasi-invariant probability measures on $\topspace$. Here is a sample result: If $\mu$ is an ergodic $\sigma$-invariant non-atomic probability measure on $\topspace$, then $\pi_{\mu}^2$ is a \supast-representation of $\lone$ on $L^2(\topspace,\mu)$ that is topologically irreducible, but not algebraically irreducible. The latter statement is a special case of \cite[Theorem~3.4]{KishiTomi}. We refer the reader to \cite{KishiTomi} for further material in this direction.
\end{remark}

\section{Algebraically irreducible representations and primitive ideals}\label{sec:primitive_ideals}

Section~\ref{sec:representations} provides a basic stockpile of \ai representations and corresponding primitive ideals, all related to the points of $\topspace$. In this section, we combine this information with the technique of induced dynamical systems from Section~\ref{sec:preliminaries}, and obtain various results on general \ai representations and primitive ideals of $\lone$. As a side result, we show that $\lone$ is a Hermitian Banach algebra if all points are periodic. We also have a closer look at the primitive ideals associated with the \ai representations from Section~\ref{sec:representations}, and show how they can be parameterized (see Proposition~\ref{prop:psi_proposition}). Using these primitive ideals, it already follows that $\lone$ is semisimple.

As is to be expected, the primitive ideals are more accessible than the in general more numerous (algebraic equivalence classes of) \ai representations. The fact that, even though we have no explicit description, all primitive ideals of $\lone$ can be shown to be selfadjoint (see Theorem~\ref{thm:primitive_ideals_are_selfadjoint}) is an example of this. Theorem~\ref{thm:exhaustive} is an even clearer illustration: if $\topspace$ is metrizable, then we know all primitive ideals\textemdash they all originate from Section~\ref{sec:representations}\textemdash even though we do not know all \ai representations.

In some results, the enveloping $C^*$-algebra $\cstar$ of $\lone$ makes an appearance. The results for that algebra are not used to obtain results for $\lone$. They are simply cited, and are included to show that some properties are in fact preserved when making the non-trivial passage from $\lone$ to $\cstar$.

Our first main task is to show that all primitive ideals are selfadjoint. For this, we shall use the properties of induced dynamical systems as in Proposition~\ref{prop:induced_system}, Remark~\ref{rem:can_be_assigned}, Proposition~\ref{prop:structure_of_finite_dimensional_representations}, and the notation therein. We start with a lemma. Recall from Section~\ref{sec:preliminaries} that a topological dynamical system $\dynsysshort=\dynsys$ is said to be topologically transitive if $\bigcupn \homeo^n(V)$ is dense in $\topspace$ for every non-empty open subset $V$ of $\topspace$.

\begin{lemma}\label{lem:property_of_induced_system}
Let $\rep:\lone\to\bounded(\vs)$ be a topologically irreducible continuous representation of $\lone$ on a normed space $\ns$. Then the induced dynamical system $\dynsysshort_{\rep}$ is topologically transitive.
\end{lemma}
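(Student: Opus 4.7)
The plan is to argue by contrapositive, after first reducing to the case where $\pi$ is injective on $\coeffalg$. By Proposition~\ref{prop:induced_system}, $\pi$ factors as $\pi=\pi'\circ\inducedrest$, where $\pi':\inducedlone\to\bounded(E)$ is continuous, topologically irreducible by part~(4), and injective on $\inducedcoeffalg$ by part~(3). Since $\Sigma_\pi$ is precisely the system on which $\pi'$ is defined, it suffices to prove the statement for $\pi'$ in place of $\pi$. Henceforth I write $X,\pi,\coeffalg,\lone$ in place of $\inducedset,\pi',\inducedcoeffalg,\inducedlone$, so that $\pi$ is now assumed injective on $\coeffalg$.

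Suppose $(X,\homeo)$ is not topologically transitive. Then there exist non-empty open subsets $V,U$ of $X$ with $\homeo^n(V)\cap U=\emptyset$ for every $n\in\Z$. Since $X$ is a compact Hausdorff, hence normal, space, Urysohn's lemma produces non-zero $g,h\in\coeffalg$ with $\supp(g)\subset V$ and $\supp(h)\subset U$. For each $n\in\Z$, the support of $\alpha^n(g)=g\circ\homeo^{-n}$ is contained in $\homeo^n(\supp g)\subset\homeo^n(V)$, which is disjoint from $U\supset\supp(h)$. Therefore $h\cdot\alpha^n(g)=0$ in $\coeffalg$ for every $n\in\Z$.

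Now for any $a=\sumn f_n\delta^n\in\lone$, the convolution product in $\lone$ gives $h\cdot a\cdot g=\sumn h f_n\alpha^n(g)\,\delta^n=0$, so $\rep(h)\rep(a)\rep(g)=0$. Consider the closed $\pi$-invariant subspace $L=\overline{\rep(\lone)\rep(g)E}$ of $E$; it is $\rep$-invariant because $\rep(\lone)\rep(g)E$ is, and by continuity. Since $\rep$ is unital and $\rep(g)\neq 0$ by injectivity on $\coeffalg$, we have $L\neq\{0\}$, so topological irreducibility forces $L=E$. On the other hand $\rep(h)$ vanishes on the dense subspace $\rep(\lone)\rep(g)E$ of $L$, so by continuity $\rep(h)L=\{0\}$, whence $\rep(h)=0$. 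This contradicts the injectivity of $\rep$ on $\coeffalg$, completing the proof.

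The only subtle ingredient is the initial reduction to injectivity on $\coeffalg$ via Proposition~\ref{prop:induced_system}; after that, the proof is a ``sandwich'' argument in which the Urysohn bump functions $g$ and $h$, supported on orbit-separated open sets, automatically satisfy $\rep(h)\rep(\lone)\rep(g)=0$, forcing $E$ to be both generated by $\rep(g)E$ under $\rep(\lone)$ and annihilated by $\rep(h)$.
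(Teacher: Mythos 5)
Your proof is correct and is essentially the paper's own argument: the paper also reduces to $\pi'$ on $\ell^1(\Sigma_\pi)$ via Proposition~\ref{prop:induced_system}, produces a non-zero closed invariant subspace that topological irreducibility forces to be all of $E$, and then contradicts injectivity on $C(X_\pi)$ with a function supported in the offending open set. The only cosmetic difference is that the paper generates the invariant subspace from the whole ideal $\{f : f\rest{S}=0\}$ with $S=\overline{\bigcup_n\sigma^n(U)}$, whereas you use a single Urysohn function $g$ supported in the complementary open set $V$; the sandwich mechanism $\pi(h)\pi(\lone)\pi(g)=\{0\}$ is the same.
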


\begin{proof}
The short proof of \cite[Proposition~4.4]{Tnotes1} in a Hilbert context also works in general, and we include it for the convenience of the reader. Suppose to the contrary that there exists a non-empty open subset $U$ such that $S:=\overline{\bigcup_{n\in\Z}\inducedhomeo^n(U)}\neq\inducedset$, so that $S$ is a proper closed invariant subset of $\inducedset$. Hence, if we let $\textup{ker}(S)=\{f\in\inducedcoeffalg : f\rest{S}=0\}$, then  $\textup{ker}(S)\neq \{0\}$. Since $\rep^\prime$ is injective by the third part of Proposition~\ref{prop:induced_system}, we have $\rep^\prime(\textup{ker}(S))E\neq \{0\}$. The invariance of $S$ implies that $\rep^\prime(\textup{ker}(S))E$ is not only invariant under $\rep^\prime(C(\inducedset))$, but also under $\rep^\prime(\induceddelta)$ and $\rep^\prime(\induceddelta^{-1})$. Since $\rep^\prime$ is continuous, the non-zero closed subspace $\overline{\rep^\prime(\textup{ker}(S))E}$ is invariant under $\rep^\prime(\ell^1(\inducedsystemshort))$. We conclude that $\overline{\rep^\prime(\textup{ker}(S))E}=E$. Now take a non-zero $f\in C(\inducedset)$ such that $\supp(f)\subset U$. Then $f\,\textup{ker}(S)=\{0\}$ since $U\subset S$, and this implies that $\rep^\prime(f)=0$. But this contradicts the injectivity of $\rep^\prime$ on $C(\inducedset)$.
\end{proof}

Before stating a consequence of the previous result, we recall from Section~\ref{sec:preliminaries}  that  a topological dynamical system $\dynsysshort=\dynsys$ is said to be topologically free if the subset $\aperpoints$ of aperiodic points of $\homeo$ is dense in $\topspace$.

\begin{corollary}\label{cor:properties_induced_system}
Let $\rep:\lone\to\linear(\vs)$ be an \ai representation of $\lone$ on a vector space $\vs$. Then the induced dynamical system $\dynsysshort_{\rep}$ is topologically transitive. If $\vs$ is infinite dimensional, then $\inducedset$ is an infinite set, and $\dynsysshort_{\rep}$ is topologically free.
\end{corollary}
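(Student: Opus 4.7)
The approach is to reduce everything to topological irreducibility together with the finite-$\topspace$ theorem (Theorem~\ref{thm:finite_set}). For the first assertion, Remark~\ref{rem:can_be_assigned} lets me regard $\rep$ as a contractive representation on a Banach space $\vs$; algebraic irreducibility trivially implies topological irreducibility, so Lemma~\ref{lem:property_of_induced_system} yields the topological transitivity of $\inducedsystemshort$ at once.

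Assume now that $\vs$ is infinite dimensional. By part (4) of Proposition~\ref{prop:induced_system}, the induced homomorphism $\rep^\prime:\inducedlone\to\bounded(\vs)$ is again \ains. If $\inducedset$ were finite, Theorem~\ref{thm:finite_set} applied to $\inducedsystemshort$ would force $\rep^\prime$ to be finite dimensional; so $\inducedset$ must be infinite. For topological freeness I argue by contradiction. Suppose $\aper(\inducedhomeo)$ is not dense in $\inducedset$, so that some non-empty open $V\subset\inducedset$ is contained in $\per(\inducedhomeo)=\bigcup_{p\geq 1}F_p$, where $F_p:=\{x\in\inducedset:\inducedhomeo^p(x)=x\}$ is closed in $\inducedset$ as the fixed-point set of a continuous self-map of a Hausdorff space. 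As an open subset of a compact Hausdorff space, $V$ is a Baire space, so some $V\cap F_p$ has non-empty interior in $V$, hence in $\inducedset$, providing a non-empty open $V^\prime\subset\inducedset$ with $V^\prime\subset F_p$. Because $F_p$ is $\inducedhomeo$-invariant and closed, the topological transitivity of $\inducedsystemshort$ forces $\inducedset=\overline{\bigcupn\inducedhomeo^n(V^\prime)}=F_p$, i.e.\ $\inducedhomeo^p$ is the identity on $\inducedset$.

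Hence the $\Z$-action on $\inducedset$ factors through $\Z/p\Z$ and every orbit has at most $p$ elements. The orbit space $\inducedset/\Z$ is Hausdorff by the standard argument for finite group actions on Hausdorff spaces: given two disjoint finite (hence compact) orbits, separate them by disjoint open subsets of $\inducedset$ and intersect over their translates by $\Z/p\Z$ to produce disjoint invariant open neighbourhoods. Under the continuous open quotient map $\inducedset\to\inducedset/\Z$, topological transitivity of $\inducedsystemshort$ translates into the assertion that every non-empty open subset of $\inducedset/\Z$ is dense, and in a Hausdorff space this forces $\inducedset/\Z$ to be a single point. Hence $\inducedset$ consists of a single finite orbit, contradicting the infiniteness of $\inducedset$ just established. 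The main obstacle is this last step\textemdash the deduction that topological transitivity of a system on a compact Hausdorff space with $\inducedhomeo^p=\mathrm{id}$ forces the space to be finite\textemdash the rest being routine assembly of earlier results.
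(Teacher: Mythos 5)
Your argument is correct, and its skeleton coincides with the paper's: reduce to a continuous normed representation via Theorem~\ref{thm:material_on_algebraically_irreducible_representations}.(2), invoke Lemma~\ref{lem:property_of_induced_system} for topological transitivity, and use Proposition~\ref{prop:induced_system}.(4) together with Theorem~\ref{thm:finite_set} to rule out a finite $\inducedset$. The difference is entirely in the last step. The paper simply cites the fact that a topologically transitive system on an infinite compact Hausdorff space is topologically free (the proof of \cite[Proposition~3.4]{T4}, or \cite[Corollary~A.2]{JTStudia}), whereas you prove it from scratch; your proof is sound. The Baire category step is legitimate ($V$ is an open, hence Baire, subspace of the compact Hausdorff space $\inducedset$, and each fixed-point set $F_p$ is closed), the closed invariance of $F_p$ combined with transitivity correctly yields $\inducedhomeo^p=\mathrm{id}$, and the passage to the Hausdorff orbit space $\orbspace{\inducedset}$ (via the same separation-by-invariant-open-sets argument that the paper uses later in Corollary~\ref{cor:basic_homeomorphism}) does force the quotient, in which every non-empty open set is dense, to be a single point and hence $\inducedset$ to be one finite orbit. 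What your route buys is self-containedness at the cost of length; what the paper's route buys is brevity by deferring a purely dynamical fact to the literature. One could also shortcut your final paragraph slightly: once $\inducedhomeo^p=\mathrm{id}$, any point $\pt\in\inducedset$ has finite orbit, which is a closed invariant set with non-empty interior relative to nothing\textemdash but transitivity applied to the complement of $\corb$ (if non-empty and open) is not immediate, so your quotient-space argument is in fact the cleaner way to finish.
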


\begin{proof}
For the first part, we note that, by the second part of Theorem~\ref{thm:material_on_algebraically_irreducible_representations}, we can assume that $\vs$ is normed and that $\rep$ is continuous. Then $\rep$ is certainly topologically irreducible; hence the first statement follows from Lemma~\ref{lem:property_of_induced_system}.
For the second part, we know from the fourth part of Proposition~\ref{prop:induced_system} that the induced representation $\rep^\prime$ of $\inducedlone$ on $\vs$ is algebraically irreducible. If $\inducedset$ were finite, then Theorem~\ref{thm:finite_set} would imply that $\vs$ is finite dimensional. This is not the case, so $\inducedset$ is infinite. Since the proof of \cite[Proposition~3.4]{T4} contains a proof of the statement that a topologically transitive dynamical system on an infinite compact Hausdorff space is topologically free\textemdash a fact that can also be deduced from \cite[Corollary~A.2]{JTStudia}\textemdash the proof is complete.
\end{proof}

We can now reach our desired conclusion.

\begin{theorem}\label{thm:primitive_ideals_are_selfadjoint}Every primitive ideal of $\lone$ is selfadjoint.
\end{theorem}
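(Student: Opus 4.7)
The plan is to let $P$ be a primitive ideal of $\lone$, write $P=\Ker(\pi)$ for some \ai representation $\pi:\lone\to\linear(\vs)$, and split into two cases depending on $\dim\vs$.

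First, for the finite dimensional case, Proposition~\ref{prop:structure_of_finite_dimensional_representations} (equivalently, Theorem~\ref{thm:description_finite_dimensional_representations}) shows that $\pi$ is algebraically equivalent to some $\rep_{\pt,\lambda}$ with $\pt\in\perpoints$ and $\lambda\in\T$. Since algebraic equivalence preserves kernels, and since $\rep_{\pt,\lambda}$ is a \supast-representation on a Hilbert space (whence its kernel is automatically selfadjoint because the involution is isometric), we conclude that $P=\Ker(\rep_{\pt,\lambda})$ is selfadjoint.

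For the infinite dimensional case, the strategy is to invoke Proposition~\ref{prop:self_adjoint}, whose hypotheses we need to verify. By the second part of Theorem~\ref{thm:material_on_algebraically_irreducible_representations} and Remark~\ref{rem:can_be_assigned}, we may assume $\vs$ carries a Banach space norm relative to which $\pi$ is contractive, and in particular continuous, so that $\pi$ has an associated induced dynamical system $\inducedsystemshort=(\inducedset,\inducedhomeo)$. By Corollary~\ref{cor:properties_induced_system}, since $\vs$ is infinite dimensional, $\inducedset$ is infinite and $\inducedsystemshort$ is topologically free. Proposition~\ref{prop:self_adjoint} then gives $\Ker(\pi)=\Kernel(\inducedset)$, which is selfadjoint by construction.

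I do not anticipate a serious obstacle: the key work has been front-loaded into the preliminaries. The finite dimensional case is immediate from the explicit \supast-models in Section~\ref{sec:representations}, and the infinite dimensional case reduces cleanly to the already-established injectivity of $\rep^\prime$ on $\inducedlone$ for topologically free induced systems (Proposition~\ref{prop:self_adjoint}), once one notes that infinite dimensionality of $\vs$ forces $\inducedset$ to be infinite (ruling out the merely topologically transitive but not topologically free case on finite $\inducedset$) via the dichotomy in Corollary~\ref{cor:properties_induced_system}. The only mildly delicate point is to remember that algebraic equivalence, not merely topological equivalence, suffices to transfer selfadjointness of the kernel in the finite dimensional case; this is immediate since the kernel depends only on the abstract representation and not on the linear topology placed on the representation space.
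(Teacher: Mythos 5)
Your proposal is correct and follows essentially the same route as the paper: the finite dimensional case via the explicit \supast-models of Theorem~\ref{thm:description_finite_dimensional_representations}, and the infinite dimensional case by combining Theorem~\ref{thm:material_on_algebraically_irreducible_representations}.(2), Corollary~\ref{cor:properties_induced_system}, and Proposition~\ref{prop:self_adjoint}. No gaps.
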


\begin{proof}
Let $\rep$ be an \ai representation of $\lone$. If $\rep$ is finite dimensional, then $\Ker(\rep)$ is selfadjoint by Theorem~\ref{thm:description_finite_dimensional_representations}. If $\rep$ is infinite dimensional, then Corollary~\ref{cor:properties_induced_system} shows that the induced system $\inducedsystemshort$ is topologically free. Using once more that, by the second part of Theorem~\ref{thm:material_on_algebraically_irreducible_representations}, we can assume that $\vs$ is normed and that $\rep$ is continuous, Proposition~\ref{prop:self_adjoint} then implies that $\Ker(\rep)$ is selfadjoint.
\end{proof}

With Theorem~\ref{thm:primitive_ideals_are_selfadjoint} available, we can now establish the following result on spectral synthesis. In order not to interrupt the line of argumentation, we shall already use the fact that $\lone$ is semisimple (see Theorem~\ref{thm:semisimple}). That result will follow by direct computation on the \ai  representations of $\lone$, as constructed in Section~\ref{sec:representations}, and does not depend on any other results in the present section.

\begin{theorem}\label{thm:all_ideals_selfadjoint}
The following are equivalent:
\begin{enumerate}
\item Every closed ideal of $\lone$ is the intersection of primitive ideals;
\item $\dynsysshort$ is free;
\item  Every closed ideal of $\lone$ is selfadjoint.
\end{enumerate}
\end{theorem}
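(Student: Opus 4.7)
The strategy is to establish the cycle $(1) \Rightarrow (3) \Rightarrow (2) \Rightarrow (1)$.

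The first implication, $(1) \Rightarrow (3)$, is immediate from Theorem~\ref{thm:primitive_ideals_are_selfadjoint}: every primitive ideal of $\lone$ is selfadjoint, and any intersection of selfadjoint ideals is selfadjoint.

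For $(2) \Rightarrow (1)$, freeness yields $\aperpoints = \topspace$, so $\dynsysshort$ is topologically free and $\coeffalg^\prime = \coeffalg$ by \cite[Proposition~3.1]{DST}. Given a nonzero closed ideal $I$ of $\lone$, \cite[Theorem~3.7]{DST} then guarantees $I \cap \coeffalg \neq \{0\}$; let $E \subsetneq \topspace$ be its hull, which is a proper closed invariant subset. I would first show $I = \Kernel(E)$: passing to the quotient $\lone/\Kernel(E) \cong \ell^1(\dynsysshort_E)$ via Lemma~\ref{lem:restricted_system} (whose induced subsystem is again topologically free), a second application of \cite[Theorem~3.7]{DST} to the image of $I$ shows that any strict enlargement of $\Kernel(E)$ inside $I$ would enlarge $I \cap \coeffalg$, contradicting the definition of $E$. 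The ideal $\Kernel(E)$ then equals the intersection of the primitive ideals of $\lone$ containing it: these correspond bijectively to primitive ideals of $\ell^1(\dynsysshort_E)$, and their intersection in the quotient is zero by the semisimplicity given by Theorem~\ref{thm:semisimple}.

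For $(3) \Rightarrow (2)$, I argue the contrapositive. Suppose $\dynsysshort$ is not free, and fix a periodic point $x$ with finite orbit $O$ of length $p$. Since for a surjective $*$-homomorphism $\phi$ one has $\phi^{-1}(J)^* = \phi^{-1}(J^*)$, which differs from $\phi^{-1}(J)$ whenever $J \neq J^*$, the surjective contractive $*$-homomorphism $\mathcal R_O : \lone \to \ell^1(\dynsysshort_O)$ of Lemma~\ref{lem:restricted_system} reduces the problem to exhibiting a non-selfadjoint closed ideal in $\ell^1(\dynsysshort_O)$. Using the minimal projections $\chi_{\{\homeo^j x\}}$ for $j=0,\ldots,p-1$ together with the shift $\delta$, one identifies $\ell^1(\dynsysshort_O)$, as a Banach $*$-algebra with topologically equivalent norm, with the matrix algebra $M_p(\ell^1(\Z))$, in which $\ell^1(\Z)$ appears as the centre generated by $\delta^p$ and $\delta^{-p}$. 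The standard matrix-unit calculation shows that closed two-sided ideals of $M_p(B)$ correspond bijectively to closed two-sided ideals of $B$, with selfadjointness preserved in both directions, so it suffices to produce a non-selfadjoint closed ideal in $\ell^1(\Z) = A(\T)$; such an ideal is supplied by the classical failure of spectral synthesis in $A(\T)$ due to Malliavin.

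The main obstacle is this last step. Malliavin's theorem yields closed ideals $J \subsetneq M_{h(J)}$, but since $h(J^*)=h(J)$ always holds, any involutive asymmetry must come from the internal structure of the lattice of closed ideals sharing a common non-synthetic hull. Singling out $J$ with $J \neq J^*$, and verifying that this asymmetry survives both the identification $\ell^1(\dynsysshort_O) \cong M_p(\ell^1(\Z))$ and the pullback via $\mathcal R_O$, is the delicate technical point on which the proof of $(3) \Rightarrow (2)$ hinges.
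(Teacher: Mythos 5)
Your implications $(1)\Rightarrow(3)$ and $(2)\Rightarrow(1)$ are correct and essentially coincide with the paper's argument: the paper packages your ``$I=\Kernel(E)$'' step as Proposition~\ref{prop:self_adjoint} applied to the quotient homomorphism $Q:\lone\to\lone/I$ (whose induced system is free, hence topologically free), and then, exactly as you do, invokes the semisimplicity of $\ell^1(\dynsysshort_E)$ from Theorem~\ref{thm:semisimple} to write $\Kernel(E)$ as an intersection of primitive ideals. The divergence, and the problem, is in $(3)\Rightarrow(2)$. The paper does not prove this implication at all: it cites \cite[Theorem~4.4]{DST} for the equivalence of (2) and (3), so the only implication it needs to supply beyond that citation is $(2)\Rightarrow(1)$.

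Your attempted proof of $(3)\Rightarrow(2)$ has a genuine gap, which you in fact concede in your last paragraph. The reduction itself is sound: pulling back along the surjective \supast-homomorphism $\mathcal R_O$ of Lemma~\ref{lem:restricted_system}, identifying $\ell^1(\dynsysshort_O)$ with $M_p(\ell^1(\Z))$ via the minimal projections $\chi_{\{\homeo^j\pt\}}$ and the matrix units $\delta^{i-j}\chi_{\{\homeo^j\pt\}}$, and matching closed ideals of $M_p(B)$ with closed ideals of $B$ all work, with involutions respected. But the terminal claim\textemdash that a non-selfadjoint closed ideal of $\ell^1(\Z)\cong A(\T)$ is ``supplied by'' Malliavin's theorem\textemdash is false as stated. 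Malliavin produces a closed set $E\subset\T$ that is not of synthesis, i.e.\ $j(E)\subsetneq k(E)$; but both $j(E)$ and $k(E)$ are selfadjoint, since complex conjugation preserves both ``vanishing on $E$'' and ``vanishing on a neighbourhood of $E$''. So the failure of synthesis does not by itself produce an ideal $J$ with $J\neq J^*$; one must exhibit a conjugation-asymmetric ideal strictly between $j(E)$ and $k(E)$, and that is precisely the step you leave unproved. Since $(3)\Rightarrow(2)$ is one link of your cycle $(1)\Rightarrow(3)\Rightarrow(2)\Rightarrow(1)$, the cycle is not closed. The existence of non-selfadjoint closed ideals in $\ell^1(\Z)$ is true but nontrivial; the economical fix is to do what the paper does and cite \cite[Theorem~4.4]{DST} for $(2)\Leftrightarrow(3)$, after which your $(1)\Rightarrow(3)$ and $(2)\Rightarrow(1)$ complete the proof.
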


\begin{proof}
The equivalence of (2) and (3) is the statement of \cite[Theorem~4.4]{DST}. Since we know from Theorem~\ref{thm:primitive_ideals_are_selfadjoint} that all primitive ideals of $\lone$ are selfadjoint, (1) implies (3). We show that (2) implies (1). Let $I$ be a closed ideal of $\lone$; we may assume that $I$ is proper. Consider the non-zero continuous quotient homomorphism $Q:\lone\to\lone/I$. If $\Sigma_Q$ is its induced dynamical system, then $\Sigma_Q$ is free, since even $\Sigma$ as a whole is free. Hence $\Sigma_Q$ is certainly topologically free, and then Proposition~\ref{prop:self_adjoint} shows that $I=\Ker(Q)=\Ker(\inducedrest)$, where $\inducedrest:\lone\to\inducedlone$ is the surjective homomorphism in Proposition~\ref{prop:induced_system}. Since $\inducedlone$ is semisimple by Theorem~\ref{thm:semisimple}, there is a collection of \ai representations of $\inducedlone$ that separates the points of $\inducedlone$. Pulling these back to $\lone$ via the surjective homomorphism $\inducedrest$, one obtains a collection of \ai representations of $\lone$ such that the intersection of the corresponding primitive ideals equals $\Ker(\inducedrest)$, i.e.\  such that this intersection equals $I$.
\end{proof}

Next, we have the following two exhaustion results at the level of \ai representations. The first one was already announced preceding Proposition~\ref{prop:must_be_finite_dimensional} and Theorem~\ref{thm:finite_set}.

\begin{proposition}\label{prop:all_finite_dimensional} The following are equivalent:
\begin{enumerate}
\item Every \ai representation of $\lone$ on a vector space is finite dimensional;
\item $\topspace=\perpoints$;
\item Every \ai representation of $C^{\ast}(\dynsysshort)$ is finite dimensional.
\end{enumerate}
\end{proposition}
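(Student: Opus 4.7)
The plan is to prove $(1)\Leftrightarrow(2)$ and $(2)\Leftrightarrow(3)$ separately, playing the explicit representations from Section~\ref{sec:representations} against the induced-system technique from Section~\ref{sec:preliminaries}. The two ``if not $(2)$'' directions will be contrapositives built on an aperiodic point, while the two ``assume $(2)$'' directions will squeeze the induced system against $\topspace=\perpoints$.

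To show $\lnot(2)\Rightarrow\lnot(1)$ and $\lnot(2)\Rightarrow\lnot(3)$, I would pick $\pt\in\aperpoints$ and appeal to the representations from Section~\ref{sec:representations} associated with $\pt$. On the Banach side, $\rep^1_\pt$ is an \ai representation of $\lone$ on the infinite dimensional space $\ell^1(\Z)$ by Proposition~\ref{prop:one_algebraically_irreducible}. On the $\cstar$-side, $\rep^2_\pt$ is a contractive \supast-representation of $\lone$ on $\ell^2(\Z)$ that is topologically irreducible by Theorem~\ref{thm:irreducibility_infinite_dimensional}; by the universal property it extends to a \supast-representation of $\cstar$ which remains topologically irreducible since the image of $\lone$ is dense in that of $\cstar$. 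Theorem~\ref{thm:cstar_reps} then reinterprets this extension as an \ai representation of $\cstar$ on the infinite dimensional space $\ell^2(\Z)$.

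For $(2)\Rightarrow(1)$, I would take an \ai representation $\rep$ of $\lone$ on $\vs$, arranged by Theorem~\ref{thm:material_on_algebraically_irreducible_representations}.(2) so that $\vs$ is normed and $\rep$ is continuous, and assume for contradiction that $\vs$ is infinite dimensional. Corollary~\ref{cor:properties_induced_system} then says that the induced system $\inducedsystemshort$ is topologically free. Since $\inducedhomeo=\homeo\rest{\inducedset}$ and $\inducedset\subset\topspace=\perpoints$, every point of the non-empty set $\inducedset$ is $\inducedhomeo$-periodic, so the set of $\inducedhomeo$-aperiodic points is empty and cannot be dense in $\inducedset$, a contradiction.

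For $(2)\Rightarrow(3)$, I would start with an \ai representation of $\cstar$ realised by Theorem~\ref{thm:cstar_reps} as a topologically irreducible \supast-representation $\pi$ on a Hilbert space $\hs$; its restriction to $\lone$ through $\lone\to\cstar$ is still topologically irreducible because any $\lone$-invariant closed subspace of $\hs$ is automatically $\cstar$-invariant by density. Lemma~\ref{lem:property_of_induced_system} then yields topological transitivity of $\inducedsystemshort$; were $\inducedset$ infinite, the fact cited in the proof of Corollary~\ref{cor:properties_induced_system} would upgrade this to topological freeness, contradicting $\inducedset\subset\perpoints$ exactly as in the previous paragraph. Hence $\inducedset$ is finite, and topological transitivity on a finite set forces $\inducedset$ to be a single $\homeo$-orbit of some period $p$. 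The main obstacle is the final step: translating this finiteness of $\inducedset$ into finite-dimensionality of $\hs$ on the $\cstar$-side. The cleanest route is to quote the standard isomorphism $C^\ast(\inducedsystemshort)\cong M_p(C(\T))$ for the crossed product associated to a single finite orbit, whose irreducible representations are all $p$-dimensional; alternatively, the argument of Proposition~\ref{prop:structure_of_finite_dimensional_representations} can be transplanted verbatim to the extension $\tilde\pi$ of $\pi\rest{\lone}$ to $C^\ast(\inducedsystemshort)$, since its ingredients (common eigenspaces for the commuting family of self-adjoint projections $\tilde\pi(C(\inducedset))$, together with unitarity of $\tilde\pi(\delta)$ forcing $\tilde\pi(\delta)^p$ to be a unimodular scalar) are purely algebraic and topological and survive the passage from $\lone$ to $\cstar$.
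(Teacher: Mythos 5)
Your handling of the equivalence of (1) and (2) coincides with the paper's: the contrapositive of $(1)\Rightarrow(2)$ via $\rep^1_\pt$ for an aperiodic $\pt$, and $(2)\Rightarrow(1)$ via the topological freeness of the induced system from Corollary~\ref{cor:properties_induced_system}. Where you genuinely diverge is the equivalence of (2) and (3): the paper simply cites \cite[Theorem~4.6.(1)]{Tnotes1} together with Theorem~\ref{thm:cstar_reps}, whereas you prove it internally. Your argument is correct: for $\lnot(2)\Rightarrow\lnot(3)$, extending the topologically irreducible $\rep^2_\pt$ to $\cstar$ by the universal property and invoking Kadison's theorem does produce an infinite dimensional \ai representation of $\cstar$; for $(2)\Rightarrow(3)$, restricting along the dense-range map $\lone\to\cstar$ preserves topological irreducibility, Lemma~\ref{lem:property_of_induced_system} and the transitive-implies-free fact force $\inducedset$ to be a single finite orbit, and either of your two finishing moves closes the argument. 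What your route buys is self-containment at the cost of length; what the paper's citation buys is brevity. One small correction to your second finishing move: the argument you describe is really that of Proposition~\ref{prop:must_be_finite_dimensional}, not of Proposition~\ref{prop:structure_of_finite_dimensional_representations} --- the latter opens by producing a common eigenvector \emph{because} the representation space is already finite dimensional, which is exactly what you are trying to establish. In your setting the common eigenspaces come instead from the finitely many characteristic-function projections of the discrete set $\inducedset$, and the scalarity of $\tilde\pi(\delta)^p$ comes from Schur's lemma for topologically irreducible \supast-representations rather than from part (1) of Theorem~\ref{thm:material_on_algebraically_irreducible_representations}; your parenthetical shows you have the right mechanism in mind, but the transplant is not ``verbatim.'' The route through $C^\ast(\inducedsystemshort)\cong M_p(C(\T))$ avoids this issue entirely.
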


\begin{proof}
If $\pt\in\aperpoints$, then the infinite dimensional \ai representations $\rep_{\pt}^1$ in Section~\ref{sec:representations} exist. This shows that (1) implies (2). If $\rep$ is an infinite dimensional \ai representation of $\lone$, then Corollary~\ref{cor:properties_induced_system} shows that its induced system $\inducedsystemshort$ is topologically free. In particular, there is at least one aperiodic point in $\inducedset$, and we see that (2) implies (1).

The equivalence of (2) and (3) follows from \cite[Theorem~4.6.(1)]{Tnotes1}, taken together with Theorem~\ref{thm:cstar_reps}.
\end{proof}

Its counterpart is the following. It is more elaborate than Proposition~\ref{prop:all_finite_dimensional}, because for finite dimensional spaces the notions of algebraic irreducibility and topological irreducibility coincide.

\begin{proposition}\label{prop:all_infinite_dimensional} The following are equivalent:
\begin{enumerate}
\item Every \ai representation of $\lone$ on a vector space is infinite dimensional;
\item Every topologically irreducible continuous representation of $\lone$ on a Banach space is infinite dimensional;
\item Every topologically irreducible representation of $\lone$ on a normed space is infinite dimensional;
\item $\topspace=\aperpoints$;
\item Every \ai representation of $C^{\ast}(\dynsysshort)$ is infinite dimensional.
\end{enumerate}
\end{proposition}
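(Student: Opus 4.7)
The plan is to establish the cycle of equivalences $(2) \Leftrightarrow (3) \Leftrightarrow (1) \Leftrightarrow (4) \Leftrightarrow (5)$. The link between the first three conditions rests on the elementary fact that on a finite dimensional normed space every linear subspace is closed and every linear map is bounded; hence a representation on a finite dimensional normed space is \ai precisely when it is topologically irreducible, and is automatically continuous. Combined with Theorem~\ref{thm:material_on_algebraically_irreducible_representations}.(2), which endows any finite dimensional \ai representation of $\lone$ with a Banach norm turning it into a contractive normed representation, this forces the equivalence of $(1)$, $(2)$, and $(3)$: if $(1)$ fails, the offending finite dimensional \ai representation becomes, via Theorem~\ref{thm:material_on_algebraically_irreducible_representations}.(2), a finite dimensional continuous topologically irreducible representation on a Banach space (and \emph{a fortiori} a topologically irreducible representation on a normed space), violating $(2)$ and $(3)$; conversely, any finite dimensional topologically irreducible normed representation is already \ai, so the failure of either $(2)$ or $(3)$ forces the failure of $(1)$.

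For $(1) \Leftrightarrow (4)$, I would use Theorem~\ref{thm:description_finite_dimensional_representations} in both directions. If $\topspace \neq \aperpoints$, any $\pt \in \perpoints$ together with any $\lambda \in \T$ produces a finite dimensional \ai representation $\rep_{\pt,\lambda}$ of $\lone$, contradicting $(1)$. Conversely, any finite dimensional \ai representation of $\lone$ is, up to algebraic equivalence, of the form $\rep_{\pt,\lambda}$ for some $\pt \in \perpoints$ and $\lambda \in \T$, so the failure of $(1)$ forces $\perpoints \neq \emptyset$ and hence the failure of $(4)$.

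For $(4) \Leftrightarrow (5)$, I would transfer to the $C^*$-algebraic setting via Theorem~\ref{thm:cstar_reps}, which identifies the \ai representations of $\cstar$ with the topologically irreducible \supast-representations on Hilbert spaces, and then invoke the analogue of Theorem~\ref{thm:description_finite_dimensional_representations} for $\cstar$\textemdash the same ingredient already used in the proof of Proposition~\ref{prop:all_finite_dimensional} via \cite[Theorem~4.6.(1)]{Tnotes1}\textemdash to the effect that finite dimensional irreducible \supast-representations of $\cstar$ are parameterised by periodic orbits and $\T$. It then follows that $(5)$ is equivalent to $\perpoints = \emptyset$, i.e.\ to $(4)$. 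I do not anticipate any serious obstacle; the only step demanding real attention is the accurate invocation of the $C^*$-algebraic input for $(4) \Leftrightarrow (5)$, while the remaining equivalences are essentially automatic once Theorems~\ref{thm:material_on_algebraically_irreducible_representations} and~\ref{thm:description_finite_dimensional_representations} are in hand.
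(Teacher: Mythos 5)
Your proposal is correct and uses essentially the same ingredients as the paper's proof: the elementary coincidence of algebraic and topological irreducibility in finite dimensions, Theorem~\ref{thm:material_on_algebraically_irreducible_representations}.(2) to realise an \ai representation as a continuous one on a Banach space, Theorem~\ref{thm:description_finite_dimensional_representations} for the link with periodic points, and the external $C^*$-algebraic result (via Theorem~\ref{thm:cstar_reps}) for $(4)\Leftrightarrow(5)$. The only difference is cosmetic: you treat $(1)\Leftrightarrow(2)\Leftrightarrow(3)$ as a block and then attach $(4)$, whereas the paper runs the single cycle $(1)\Rightarrow(4)\Rightarrow(3)\Rightarrow(2)\Rightarrow(1)$.
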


\begin{proof}
If $\pt\in\perpoints$, then the finite dimensional \ai representations $\rep_{\pt,\lambda}$ for $\lambda\in\T$ from Section~\ref{sec:representations} exist. This shows that (1) implies (4). Since a finite dimensional topologically irreducible representation on a normed space is a finite dimensional \ai representation, the existence of such a representation would imply the existence of a periodic point by Theorem~\ref{thm:description_finite_dimensional_representations}. Hence (4) implies (3), and trivially (3) implies (2). Since, by the second part of Theorem~\ref{thm:material_on_algebraically_irreducible_representations}, every \ai representation yields an algebraically (in particular: topologically) irreducible continuous representation on a Banach space, (2) implies (1).

The equivalence of (4) and (5) follows from \cite[Proposition~4.5]{Tnotes1}, taken together with Theorem~\ref{thm:cstar_reps}.
\end{proof}

Before proceeding with the main line, we give an application of what has been obtained so far. In \cite[Theorem~4.6]{DST}, it was established that $\lone$ is a Hermitian
algebra (i.e.\  that the spectrum of every selfadjoint element is a subset of the real numbers) whenever $\topspace$ is a finite set. We can now improve this.

\begin {theorem}\label{thm:Hermitian}
If all points of $\topspace$ are periodic, then $\lone$ is a Hermitian algebra.
\end {theorem}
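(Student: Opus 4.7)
The plan is to verify directly that every selfadjoint element $a=a^*\in\lone$ has real spectrum. Fix such an $a$ and $\lambda\in\sigma_{\lone}(a)$; the task is to show $\lambda\in\R$. Since $a-\lambda$ fails to be invertible in the unital algebra $\lone$, either it has no left inverse or it has no right inverse.

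First I would handle the no-left-inverse case. Here $\lone\cdot(a-\lambda)$ is a proper left ideal of $\lone$, and a standard Zorn's lemma argument produces a maximal modular left ideal $M$ containing it. The left regular action on $\lone/M$ is then an \ai representation $\pi$ of $\lone$, and from $a-\lambda=1\cdot(a-\lambda)\in M$ one obtains
\[
\pi(a)(1+M)=a+M=\lambda+M=\lambda\,(1+M).
\]
Because $1\notin M$, the coset $1+M$ is a nonzero eigenvector of $\pi(a)$ with eigenvalue $\lambda$.

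Next I would invoke the hypothesis $\topspace=\perpoints$. By Proposition~\ref{prop:all_finite_dimensional}, $\pi$ is finite dimensional, and then by Theorem~\ref{thm:description_finite_dimensional_representations} the representation space $\lone/M$ can be equipped with a Hilbert space structure that turns $\pi$ into a \supast-representation. Relative to this structure, $\pi(a)$ is a selfadjoint operator on a finite dimensional Hilbert space, so all its eigenvalues\textemdash and $\lambda$ in particular\textemdash are real.

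Finally, the no-right-inverse case reduces to the no-left-inverse case via the isometric involution: if $(a-\lambda)b=1$ has no solution $b\in\lone$, then neither does $b^*(a-\lambda)^*=1$, so $(a-\lambda)^*=a-\bar\lambda$ has no left inverse. Applying the previous argument to $\bar\lambda\in\sigma_{\lone}(a)$ yields $\bar\lambda\in\R$ and hence $\lambda\in\R$. The only nontrivial step is the initial move from ``$a-\lambda$ is not invertible'' to ``$\lambda$ is an eigenvalue of some algebraically irreducible representation of $\lone$ evaluated at $a$''; once this purely algebraic construction is in place, the classification of finite dimensional \ai representations as \supast-representations on Hilbert spaces finishes the argument essentially for free.
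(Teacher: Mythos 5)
Your argument is correct, but it proceeds quite differently from the paper's. The paper does not verify the reality of spectra directly: it invokes Palmer's characterization of Hermitian Banach \supast-algebras from \cite{P}, namely that $\lone$ is Hermitian if and only if for every \ai representation $\rep$ and non-zero vector $e$ there is a topologically irreducible \supast-representation $\rep'$ on a Hilbert space and a non-zero vector $h$ with $\{a : \rep(a)e=0\}=\{a : \rep'(a)h=0\}$; this criterion is then immediate from Proposition~\ref{prop:all_finite_dimensional} and Theorem~\ref{thm:description_finite_dimensional_representations}. You instead work straight from the definition: given $a=a^*$ and $\lambda\in\sigma(a)$, you split into the no-left-inverse and no-right-inverse cases, realise $\lambda$ (respectively $\bar\lambda$) as an eigenvalue of $\rep(a)$ for the \ai representation on $\lone/M$ for a maximal left ideal $M\supset\lone(a-\lambda)$, and then use the same two results of the paper to see that $\rep$ is algebraically equivalent to a finite dimensional \supast-representation, so that $\rep(a)$ has real eigenvalues. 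The two ingredients specific to $\lone$ are identical in both arguments; the difference is that you replace the citation of Palmer's theorem by the elementary (and standard) passage from a non-invertible $a-\lambda$ in a unital algebra to an eigenvalue in an \ai representation, together with the involution trick to handle the missing-right-inverse case. This makes your proof self-contained modulo the paper's earlier results, at the cost of a little more bookkeeping; the paper's proof is shorter but leans on an external black box. All the individual steps you take (properness of $\lone(a-\lambda)$, Zorn, simplicity of $\lone/M$, the eigenvector $1+M$, similarity preserving eigenvalues, and the adjoint argument reducing the right-inverse case to the left-inverse case) are sound.
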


\begin{proof}
As a consequence of \cite{P}, $\lone$ being Hermitian is equivalent to the following: If $\rep:\lone\to\linear(\vs)$ is an \ai representation of $\lone$ on a vector space $\vs$ and $e\in\vs$ is non-zero, then there exist a topologically irreducible \supast-representation $\rep^\prime:\lone\to \bounded(\hs)$ on a Hilbert space and a non-zero $h\in\hs$ such that $\{a\in\lone : \rep(a)e=0\}=\{a\in\lone :  \rep^\prime(a)h=0\}$. But this is obvious, since Proposition~\ref{prop:all_finite_dimensional} shows that every \ai representation is finite dimensional, and, by Theorem~\ref{thm:description_finite_dimensional_representations}, every such representation can be realised as a topologically irreducible \supast-representation on a Hilbert space.
\end{proof}

As an example, we know from Theorem~\ref{thm:Hermitian} that $\lone$ is Hermitian for rational rotations of $\T$. Later, we shall also determine the structure space of these algebras (see Example~\ref{ex:rotations}).

After this sidestep, we return to main line. In order to get further, we need to have more detailed information about the primitive ideals originating from Section~\ref{sec:representations}. While presenting this, we also introduce some terminology and notation that will be even more prominent in Section~\ref{sec:structure_space}.

The \ai representations in Section~\ref{sec:representations} are associated with points, but algebraically equivalent representations yield the same primitive ideal. We shall now take this into account, and describe the primitive ideals originating from Section~\ref{sec:representations} in a bijective fashion (see Proposition~\ref{prop:psi_proposition}). It turns out to be more natural to work with closures of orbits than with orbits, so, to establish notation, we let $\corbset$ denote the set of all closures of orbits in $\topspace$. Note that, as irrational rotations of $\T$ show, it may well be that different orbits have equal closure. The subset of (closures of) orbits of periodic points is denoted by $\corbsetperpoints$, and the subset of closures of orbits of aperiodic points by $\corbsetaperpoints$. Then $\corbset=\corbsetperpoints \sqcup  \corbsetaperpoints$ is a disjoint union.

Let $\pt\in\perpoints$ and let $\lambda\in\T$. If $y$ is in the orbit of $\pt$, then, by Proposition~\ref{prop:finite_dimensional_given}, the \ai representations $\rep_{\pt,\lambda}$ and $\rep_{y,\lambda}$ are algebraically equivalent. Hence we can associate a well defined primitive ideal $\prim_{\corb, \lambda}$ with $\lambda\in\T$ and the closure $\corb$ of the orbit $\orb$ of $\pt$, by putting $\prim_{\corb,\lambda}=\Ker(\rep_{\pt,\lambda})$. Of course, $\corb=\orb$ here, but, when we consider primitive ideals originating from aperiodic points, we shall see why the closure of an orbit is more natural than the orbit itself, and for consistency and ease of argumentation we employ this notation for finite orbits as well.

We have the following description of such $\prim_{\corb, \lambda}$.

\begin{proposition}\label{prop:finite_orbit_ideal}
Let $\corb\in\corbsetperpoints$ be \ulb the closure of\urb\ an orbit consisting of $p$-periodic points. If $a=\loneelement{n}\in\lone$, then for the primitive ideals associated with $\corb$ we have:
\begin{enumerate}
\item If $\lambda\in\T$, then $a\in\prim_{\corb, \lambda}$ if and only if
\begin{equation*}\label{e:vanishing}
\suml\lambda^l f_{lp + j}(\pt) = 0
\end{equation*}
for all $j=0,\ldots,p-1$ and all $\pt\in\orb$.
\item $a\in\bigcap_{\lambda\in\T}\prim_{\corb, \lambda}$ if and only if $f_n\rest{\corb}=0$ for all $n\in\Z$.
\end{enumerate}
\end{proposition}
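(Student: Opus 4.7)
\bigskip

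The plan is to compute $\pi_{x,\lambda}(a)$ explicitly on the orthonormal basis $\{e_0,\ldots,e_{p-1}\}$ of $H_{x,\lambda}$ used in the definition of $\pi_{x,\lambda}$, using the matrix representations of $T_\lambda=\pi_{x,\lambda}(\delta)$ and $\pi_{x,\lambda}(f)$ given just before Proposition~\ref{prop:finite_dimensional_given}. For part (1), fix $x\in\mathfrak{o}$; the key identity is $T_\lambda^p=\lambda\cdot I$, so that for $n=lp+j$ with $0\le j<p$ one has $T_\lambda^{lp+j}=\lambda^l T_\lambda^j$, and for each basis vector $e_k$ one gets $T_\lambda^{lp+j}e_k=\lambda^l\cdot\lambda^{\varepsilon(k,j)}e_{(k+j)\bmod p}$, where $\varepsilon(k,j)\in\{0,1\}$ records whether the shift wraps around. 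Since $x$ has period $p$, $f_{lp+j}(\sigma^{(k+j)\bmod p}x)=f_{lp+j}(\sigma^{k+j}x)$, so $\pi_{x,\lambda}(f_{lp+j})T_\lambda^{lp+j}e_k=\lambda^l\lambda^{\varepsilon(k,j)}f_{lp+j}(\sigma^{k+j}x)\,e_{(k+j)\bmod p}$.

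Summing over all $n\in\Z$ and grouping according to the residue $j$ of $n$ modulo $p$ (which is licit because $a\in\lone$ makes the series absolutely convergent in $\coeffalg$ and the representation is continuous), one obtains
\[
\pi_{x,\lambda}(a)e_k=\sum_{j=0}^{p-1}\lambda^{\varepsilon(k,j)}\left(\suml\lambda^l f_{lp+j}(\sigma^{k+j}x)\right)e_{(k+j)\bmod p}.
\]
For fixed $k$, the indices $(k+j)\bmod p$ for $j=0,\ldots,p-1$ are all distinct, so the basis vectors appearing on the right are linearly independent. Thus $\pi_{x,\lambda}(a)e_k=0$ for all $k$ if and only if every coefficient vanishes, and since $|\lambda|=1$, this is equivalent to $\suml\lambda^l f_{lp+j}(\sigma^{k+j}x)=0$ for all $j,k\in\{0,\ldots,p-1\}$. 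Letting $k$ vary with $j$ fixed makes $\sigma^{k+j}x$ range over the entire orbit $\mathfrak{o}$, which (since $\mathfrak{o}$ is finite and hence closed) equals $\corb$; this yields the condition in (1), and the argument is plainly reversible.

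For part (2), applying (1) to all $\lambda\in\T$ simultaneously shows that $a\in\bigcap_{\lambda\in\T}P_{\corb,\lambda}$ if and only if, for every $y\in\orb$ and every $j\in\{0,\ldots,p-1\}$, the function $\lambda\mapsto\suml\lambda^l f_{lp+j}(y)$ vanishes identically on $\T$. Since $\sum_l |f_{lp+j}(y)|\le\sum_n\|f_n\|<\infty$, this is the Fourier series on $\T$ of an element of $\ell^1(\Z)$; by uniqueness of Fourier coefficients, it vanishes on $\T$ precisely when $f_{lp+j}(y)=0$ for all $l\in\Z$. Varying $j$ and $y$ and using once more $\orb=\corb$, this becomes $f_n|_{\corb}=0$ for every $n\in\Z$, which is the desired characterization; the converse is immediate.

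The only real obstacle is the bookkeeping in the explicit computation of $\pi_{x,\lambda}(a)e_k$, in particular keeping track of the wrap-around factor $\lambda^{\varepsilon(k,j)}$; once one notes that $|\lambda|=1$ renders this factor harmless, the rest of the argument is routine linear algebra in part (1) and a one-line appeal to Fourier uniqueness on $\T$ in part (2).
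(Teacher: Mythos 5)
Your proof is correct and follows exactly the route the paper indicates for its (outsourced) proof of this proposition: a direct matrix computation of $\rep_{\pt,\lambda}(a)$ on the basis $\{e_0,\dots,e_{p-1}\}$ using $T_\lambda^p=\lambda I$ for part (1), and the injectivity of the Fourier transform on $\loneZ$ for part (2). The bookkeeping with the wrap-around factor $\lambda^{\varepsilon(k,j)}$ and the regrouping of the absolutely convergent series by residues mod $p$ are both handled correctly, so nothing is missing.
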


For the proof we refer to \cite[Proposition~2.10]{JTBJMA}. It involves a straightforward computation for the first part, and then the second part follows easily from the injectivity of the Fourier transform on $\ell^1(\Z)$.

We now turn to the closure of an infinite orbit. The following  is immediately clear from the description of the corresponding representations $\rep_\pt^p$ from Section~\ref{subsec:infinite_dimensional} and the algebraic irreducibility of $\rep_\pt^1$, as asserted by Proposition~\ref{prop:one_algebraically_irreducible}. For the selfadjointness one need not resort to Theorem~\ref{thm:primitive_ideals_are_selfadjoint}, as this is clear by inspection.

\begin{lemma}\label{lem:infinite_orbit_ideal}
Let $\pt\in\aperpoints$, let $p\in[1,\infty]$, and let $a=\sumn f_n\delta^n\in\lone$. Then $\rep^p_\pt(a)=0$ if and only $f_n$\ vanishes on the closure of the orbit of $\pt$ for all $n
\in\Z$. Hence $\Ker (\rep_\pt^p)$ does not depend on $p$, and it is a selfadjoint primitive ideal of $\lone$.
\end{lemma}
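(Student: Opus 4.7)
The plan is to unpack the action of $\rep^p_\pt(a)$ on the standard basis vectors $e_k\in\ell^p(\Z)$ and to read off the vanishing conditions directly. For $\loneelement{n}\in\lone$, a direct computation using $\rep^p_\pt(f_n)e_k=f_n(\homeo^k\pt)e_k$ and $\rep^p_\pt(\delta)=S$ gives
\[
\rep^p_\pt(a)e_k=\sumn f_n(\homeo^{n+k}\pt)\,e_{n+k}
\]
for every $k\in\Z$ (the series converging absolutely in $\ell^p(\Z)$ since $\sumn\Vert f_n\Vert<\infty$). Since the $e_k$ are well defined in $\ell^p(\Z)$ for every $p\in[1,\infty]$ and the coordinate functionals are bounded, $\rep^p_\pt(a)=0$ is equivalent to $\rep^p_\pt(a)e_k=0$ for all $k\in\Z$, which in turn is equivalent to $f_n(\homeo^m\pt)=0$ for all $n,m\in\Z$, i.e., to each $f_n$ vanishing on the orbit of $\pt$. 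By continuity of $f_n$, this is equivalent to each $f_n$ vanishing on the closure of the orbit.

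Conversely, if every $f_n$ vanishes on the orbit of $\pt$, then the diagonal operator $\rep^p_\pt(f_n)$ sends each $e_k$ to $0$, so $\rep^p_\pt(f_n)=0$ as an operator on $\ell^p(\Z)$ (for $p<\infty$ by density of the finite support sequences, and for $p=\infty$ directly by pointwise evaluation). Consequently $\rep^p_\pt(a)=\sumn\rep^p_\pt(f_n)S^n=0$. This proves the equivalence and simultaneously shows that the characterisation is independent of $p$.

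It follows that $\Ker(\rep^p_\pt)$ does not depend on $p\in[1,\infty]$; in particular it coincides with $\Ker(\rep^1_\pt)$. By Proposition~\ref{prop:one_algebraically_irreducible}, $\rep^1_\pt$ is algebraically irreducible, so this common kernel is a primitive ideal of $\lone$. For selfadjointness, observe that the orbit of $\pt$, and hence its closure, is $\homeo$-invariant. If $a=\loneelement{n}\in\Ker(\rep^p_\pt)$, then each $f_n$ vanishes on $\corb_\pt$, and for every $y\in\corb_\pt$ we have $\homeo^{-n}y\in\corb_\pt$, so $\overline{\alpha^n(f_{-n})}(y)=\overline{f_{-n}(\homeo^{-n}y)}=0$. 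Since $a^*=\sumn\overline{\alpha^n(f_{-n})}\,\delta^n$, every coefficient of $a^*$ vanishes on $\corb_\pt$, and hence $a^*\in\Ker(\rep^p_\pt)$.

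There is no real obstacle in this argument; the only point that requires a moment's attention is handling $p=\infty$ (where $\ell^\infty(\Z)$ is not the closed span of the $e_k$), and this is resolved by evaluating the diagonal operators $\rep^\infty_\pt(f_n)$ pointwise on arbitrary sequences rather than relying on density of finitely supported vectors.
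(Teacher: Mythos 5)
Your proof is correct and fills in exactly the argument the paper leaves implicit: the paper merely declares the lemma ``immediately clear'' from the explicit form of $\rep^p_\pt$ and the algebraic irreducibility of $\rep^1_\pt$ (Proposition~\ref{prop:one_algebraically_irreducible}), with selfadjointness ``clear by inspection''. Your coordinatewise computation on the $e_k$, the appeal to that proposition for primitivity, and the verification of the involution formula constitute precisely that inspection, and you correctly flag and resolve the only delicate point, namely $p=\infty$.
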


The description of this common kernel in terms of the coefficients $f_n$ was already noted for $\pi_\pt^2$ in \cite[Proposition~2.10]{JTBJMA}. As a consequence of Lemma~\ref{lem:infinite_orbit_ideal}, if $\corb\in\corbsetaperpoints$, then we can take any $\pt\in \topspace$ such that $\overline{\Z\cdot \pt}=\corb$, and obtain a well defined primitive ideal by putting $\prim_{\corb}=\Ker(\rep_{\pt}^1)$. We include the above description in this notation for reference purposes.

\begin{proposition}\label{prop:infinite_orbit_ideal}
Let $\corb\in\corbsetaperpoints$ be the closure of an infinite orbit. Then for the primitive ideal associated with $\corb$ we have
\[
\prim_{\corb}=\left\{\loneelement{n}\in\lone : f_n\rest{\corb}=0 \textup{ for all } n \in \Z\right\}.
\]
\end{proposition}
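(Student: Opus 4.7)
The plan is essentially to observe that this proposition is a direct reformulation of Lemma~\ref{lem:infinite_orbit_ideal} in the orbit-closure notation just introduced, so almost nothing needs to be done. I would unwind the definitions in one step and appeal to the lemma.

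More concretely, I would start by fixing any $\pt\in\topspace$ with $\overline{\Z\cdot \pt}=\corb$. Since $\corb\in\corbsetaperpoints$, the orbit of $\pt$ must be infinite, so $\pt\in\aperpoints$ and the representation $\rep_\pt^1$ of Section~\ref{subsec:infinite_dimensional} is defined. By the definition preceding the proposition, $\prim_{\corb}=\Ker(\rep_\pt^1)$, and Lemma~\ref{lem:infinite_orbit_ideal}, applied with $p=1$, states that for $a=\sumn f_n\delta^n\in\lone$,
\[
\rep_\pt^1(a)=0\iff f_n\rest{\overline{\Z\cdot \pt}}=0\text{ for all }n\in\Z,
\]
which is exactly $f_n\rest{\corb}=0$ for all $n\in\Z$. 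This yields the claimed equality.

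There is no real obstacle here: the only small point to check is that the description is genuinely independent of the choice of $\pt\in\corb$, but this is already built into Lemma~\ref{lem:infinite_orbit_ideal} (whose right-hand side depends only on $\corb$) and was the justification for writing $\prim_{\corb}$ rather than $\prim_\pt$ in the first place. If one wished, one could also rederive the forward direction of Lemma~\ref{lem:infinite_orbit_ideal} directly by computing $\rep_\pt^1(a)e_k=\sumn f_n(\homeo^{k+n}\pt)e_{k+n}$ and noting that vanishing for all $k$ forces $f_n(\homeo^m\pt)=0$ for all $m,n\in\Z$, hence $f_n=0$ on $\corb$ by continuity; the converse is immediate. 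But since Lemma~\ref{lem:infinite_orbit_ideal} is already available, no such recomputation is necessary.
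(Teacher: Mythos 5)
Your proposal is correct and matches the paper exactly: Proposition~\ref{prop:infinite_orbit_ideal} is stated there without proof, being (as the text preceding it says) just the content of Lemma~\ref{lem:infinite_orbit_ideal} rewritten in the notation $\prim_{\corb}=\Ker(\rep_{\pt}^1)$, ``for reference purposes.'' Your unwinding of the definitions, the well-definedness remark, and the optional direct computation are all consistent with the paper's treatment.
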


Since $\topspace$ is the union of all orbit closures, the following result (already used in the proof of Theorem~\ref{thm:all_ideals_selfadjoint}) is now clear from Proposition~\ref{prop:finite_orbit_ideal} and Proposition~\ref{prop:infinite_orbit_ideal}.

\begin{theorem}\label{thm:semisimple}
\[
\bigcap_{\genfrac{}{}{0pt}{1}{\corb\in\corbsetperpoints}{\lambda\in\T}}\!\!\!\!\prim_{\corb,\lambda}\!\!\!\!\bigcap_{\genfrac{}{}{0pt}{1}{\corb\in\corbsetaperpoints}{}} \!\!\!\! \prim_{\corb}=\{0\}.
\]
In particular, the Banach algebra $\lone$ is semisimple.
\end{theorem}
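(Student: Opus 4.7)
The plan is to take an arbitrary $a=\loneelement{n}\in\lone$ lying in the displayed intersection and show that every coefficient $f_n$ vanishes identically on $\topspace$, which forces $a=0$. The key observation is that $\topspace$ decomposes as $\bigcup_{\corb\in\corbset}\corb$, where $\corbset=\corbsetperpoints\sqcup\corbsetaperpoints$; hence it suffices to establish that each $f_n$ vanishes on every orbit closure $\corb\in\corbset$.

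For $\corb\in\corbsetaperpoints$, the hypothesis gives $a\in\prim_\corb$, and Proposition~\ref{prop:infinite_orbit_ideal} immediately yields $f_n\rest{\corb}=0$ for all $n\in\Z$. For $\corb\in\corbsetperpoints$, the hypothesis places $a$ in $\bigcap_{\lambda\in\T}\prim_{\corb,\lambda}$, and part~(2) of Proposition~\ref{prop:finite_orbit_ideal} then gives $f_n\rest{\corb}=0$ for all $n\in\Z$. Combining the two cases, every $f_n$ vanishes on all of $\topspace$; therefore $a=0$ and the intersection is $\{0\}$.

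For the semisimplicity statement, the argument is to invoke the standard characterisation of the Jacobson radical of a Banach algebra as the intersection of all its primitive ideals. Each $\prim_{\corb,\lambda}$ and each $\prim_\corb$ is, by construction, a primitive ideal of $\lone$, being the kernel of an \ai representation from Section~\ref{sec:representations}. Consequently, the Jacobson radical is contained in the intersection just shown to be $\{0\}$, and $\lone$ is semisimple.

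There is no genuine obstacle here: all the content sits in the two preceding propositions. The only small points of care are noting that the intersection over $\lambda\in\T$ in the displayed expression is precisely the hypothesis needed for Proposition~\ref{prop:finite_orbit_ideal}(2), and that every point of $\topspace$ does indeed belong to the closure of its own orbit, so that the orbit closures exhaust $\topspace$.
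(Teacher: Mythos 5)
Your proof is correct and follows essentially the same route as the paper: the paper likewise deduces the result directly from Propositions~\ref{prop:finite_orbit_ideal} and~\ref{prop:infinite_orbit_ideal} together with the observation that $\topspace$ is the union of all orbit closures. The concluding step\textemdash that the Jacobson radical, being the intersection of all primitive ideals, is contained in the displayed intersection of primitive ideals\textemdash is exactly the intended reading of ``in particular''.
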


We can now establish two separation results at the level of primitive ideals.

\newpage

\begin{proposition}\label{prop:finite_dimensional_separate}
The following are equivalent:
\begin{enumerate}
\item The \ai representations of $\lone$ on finite dimensional vector spaces separate the elements of $\lone$;
\item The \ai representations of $\lone$ on finite
dimensional vector spaces separate the elements of $\coeffalg$;
\item $Per(\homeo)$ is dense in $\topspace$;
\item The \ai representations of $C^{\ast}(\dynsysshort)$ on finite dimensional vector spaces separate the elements of $C^\ast(\dynsysshort)$.
\end{enumerate}
\end{proposition}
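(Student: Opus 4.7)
The plan is to establish the cycle $(1)\Rightarrow(2)\Rightarrow(3)\Rightarrow(1)$ together with the separate equivalence $(3)\Leftrightarrow(4)$, making heavy use of the classification of finite dimensional \ai representations from Theorem~\ref{thm:description_finite_dimensional_representations} and the explicit description of the associated primitive ideals in Proposition~\ref{prop:finite_orbit_ideal}. The implication $(1)\Rightarrow(2)$ is of course immediate since $\coeffalg\subset\lone$.

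For $(2)\Rightarrow(3)$, I would argue by contraposition. If $\perpoints$ is not dense, then $\overline{\perpoints}$ is a proper closed subset of $\topspace$, so by Urysohn's lemma there exists a non-zero $f\in\coeffalg$ with $f\rest{\overline{\perpoints}}=0$. By Theorem~\ref{thm:description_finite_dimensional_representations}, every finite dimensional \ai representation of $\lone$ is algebraically equivalent to some $\rep_{\pt,\lambda}$ with $\pt\in\perpoints$; an inspection of the construction of $\rep_{\pt,\lambda}$ shows that $\rep_{\pt,\lambda}(f)$ is the diagonal matrix with entries $f(\homeo^j \pt)$, all of which vanish since $\homeo^j \pt\in\perpoints$. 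Hence $f$ lies in the kernel of every finite dimensional \ai representation, contradicting (2).

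For $(3)\Rightarrow(1)$, suppose $\perpoints$ is dense and let $a=\loneelement{n}\in\lone$ belong to the kernel of every finite dimensional \ai representation. By Theorem~\ref{thm:description_finite_dimensional_representations} this means $a\in\bigcap_{\corb\in\corbsetperpoints,\,\lambda\in\T}\prim_{\corb,\lambda}$. Applying Proposition~\ref{prop:finite_orbit_ideal}(2) orbit by orbit, we obtain $f_n\rest{\corb}=0$ for every $\corb\in\corbsetperpoints$ and every $n\in\Z$, which is the same as $f_n\rest{\perpoints}=0$ since a periodic orbit coincides with its closure. By continuity and the density of $\perpoints$ in $\topspace$, each $f_n$ vanishes on $\topspace$, so $a=0$. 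This gives (1) and closes the cycle.

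The equivalence $(3)\Leftrightarrow(4)$ is the analogue of the equivalences cited in the proofs of Propositions~\ref{prop:all_finite_dimensional} and~\ref{prop:all_infinite_dimensional}; I would invoke the corresponding statement from \cite{Tnotes1}, translating via Theorem~\ref{thm:cstar_reps} between \ai representations of $\cstar$ on vector spaces and topologically irreducible \supast-representations on Hilbert spaces, under which finite dimensionality is obviously preserved. The main subtlety in the whole proof is really just the verification that the intersection of the primitive ideals $\prim_{\corb,\lambda}$ over all periodic orbit closures and all $\lambda\in\T$ exactly detects vanishing of each coefficient function on $\perpoints$, but this is handed to us by Proposition~\ref{prop:finite_orbit_ideal}(2), so no genuine obstacle remains.
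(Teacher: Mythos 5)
Your proposal is correct and follows essentially the same route as the paper: both rest on the classification in Theorem~\ref{thm:description_finite_dimensional_representations} together with the identity $\bigcap_{\corb\in\corbsetperpoints,\,\lambda\in\T}\prim_{\corb,\lambda}=\{\sumn f_n\delta^n\in\lone : f_n\rest{\overline{\perpoints}}=0 \textup{ for all }n\in\Z\}$ from Proposition~\ref{prop:finite_orbit_ideal}, and both obtain $(3)\Leftrightarrow(4)$ by citing the corresponding result of \cite{Tnotes1} via Theorem~\ref{thm:cstar_reps}. The only difference is presentational: you spell out the cycle of implications that the paper compresses into ``it is now clear.''
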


In view of the second part of Theorem~\ref{thm:material_on_algebraically_irreducible_representations}, one could also have considered the topologically irreducible continuous (or even contractive) representations on finite dimensional Banach spaces in (1) and (2).

\begin{proof}
From Theorem~\ref{thm:description_finite_dimensional_representations}, we know that the \ai finite dimensional representations are precisely the $\rep_{\pt,\lambda}$ from Section~\ref{subsec:finite_dimensional}, and, from Proposition~\ref{prop:finite_orbit_ideal}, we have $\bigcap_{\corb\in\corbsetperpoints, \lambda\in\T}P_{\corb,\lambda}=\{\sumn f_n\delta^n\in\lone: f_n\rest{\overline{\perpoints}}=0 \textup{ for all }n\in\Z\}$. It is now clear that (1), (2), and (3) are equivalent.

The equivalence of (3) and (4) follows from \cite[Theorem~4.6.(2)]{Tnotes1}, taken together with Theorem~\ref{thm:cstar_reps}.

\end{proof}

\begin{proposition}\label{prop:infinite_dimensional_separate}
The following are equivalent:
\begin{enumerate}
\item The \ai representations $\{\rep_\pt^1 : \pt\in\aperpoints\}$ of $\lone$ on infinite dimensional vector spaces separate the elements of $\lone$;
\item The \ai representations of $\lone$ on infinite
dimensional vector spaces separate the elements of $\lone$;
\item The \ai representations $\{\rep_\pt^1 : \pt\in\aperpoints\}$ of $\lone$ on infinite dimensional vector spaces separate the elements of $\coeffalg$;
\item The \ai representations of $\lone$ on infinite
dimensional vector spaces separate the elements of $\coeffalg$;
\item $\dynsysshort$ is topologically free;
\item The \ai representations of $C^{\ast}(\dynsysshort)$
on infinite dimensional vector spaces separate the elements of $C^\ast(\dynsysshort)$.
\end{enumerate}
\end{proposition}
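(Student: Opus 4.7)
The plan is to prove the equivalence of (1)--(5) via a short cycle, and to derive (5) $\Leftrightarrow$ (6) by an argument of the same shape as at the end of Proposition~\ref{prop:finite_dimensional_separate}. Several implications are immediate: the family in (1) is a subfamily of that in (2), since by Proposition~\ref{prop:one_algebraically_irreducible} each $\rep_\pt^1$ with $\pt \in \aperpoints$ is \ai and acts on the infinite dimensional space $\ell^1(\Z)$; hence (1) $\Rightarrow$ (2). The implications (1) $\Rightarrow$ (3), (2) $\Rightarrow$ (4), and (3) $\Rightarrow$ (4) all follow from $\coeffalg \subset \lone$ together with the fact that enlarging a separating family keeps it separating.

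To close the cycle I would prove (5) $\Rightarrow$ (1) and (4) $\Rightarrow$ (5). For (5) $\Rightarrow$ (1), assume $\aperpoints$ is dense in $\topspace$ and take $a = \sumn f_n \delta^n \in \lone$ in the kernel of every $\rep_\pt^1$ with $\pt \in \aperpoints$. By Proposition~\ref{prop:infinite_orbit_ideal}, each $f_n$ vanishes on the closure of the orbit of $\pt$, in particular at $\pt$ itself; continuity of the $f_n$ combined with the density of $\aperpoints$ in $\topspace$ then forces $f_n \equiv 0$ for all $n$, whence $a = 0$. For (4) $\Rightarrow$ (5) I would argue contrapositively: if $\dynsysshort$ is not topologically free, pick a non-empty open $U \subset \topspace$ with $U \cap \aperpoints = \emptyset$ and a non-zero $f \in \coeffalg$ with $\supp f \subset U$. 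For any infinite dimensional \ai representation $\rep$ of $\lone$, Corollary~\ref{cor:properties_induced_system} guarantees that $\inducedsystemshort$ is topologically free, i.e.\ the aperiodic points of $\homeo$ lying in $\inducedset$ are dense in $\inducedset$; were $\inducedset \cap U$ non-empty, it would be a non-empty open subset of $\inducedset$ meeting $\aperpoints$, contradicting our choice of $U$. Hence $f\rest{\inducedset} = 0$, so Proposition~\ref{prop:induced_system} gives $\rep(f) = 0$, contradicting~(4). Finally, (5) $\Leftrightarrow$ (6) is obtained from the corresponding statement in \cite{Tnotes1} together with Theorem~\ref{thm:cstar_reps}, exactly as in the proof of Proposition~\ref{prop:finite_dimensional_separate}.

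The substantive step is the implication (4) $\Rightarrow$ (5), but even this is not truly difficult once Corollary~\ref{cor:properties_induced_system} is in hand: the decisive input is the topological freeness of the induced system of every infinite dimensional \ai representation, which ultimately rests on \cite[Theorem~3.7]{DST}, the same deep ingredient already exploited in Proposition~\ref{prop:self_adjoint}. Everything else reduces to the manipulation of separating families and the explicit description of the kernels $\prim_{\corb}$ from Section~\ref{sec:representations}.
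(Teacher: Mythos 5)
Your proof is correct and follows essentially the same route as the paper's: the trivial implications among (1)--(4), the implication (5) $\Rightarrow$ (1) via the explicit kernel description in Proposition~\ref{prop:infinite_orbit_ideal}, the implication (4) $\Rightarrow$ (5) resting on the topological freeness of induced systems from Corollary~\ref{cor:properties_induced_system}, and (5) $\Leftrightarrow$ (6) by citing the known \Cstar-result together with Theorem~\ref{thm:cstar_reps}. The only cosmetic difference is that you phrase (4) $\Rightarrow$ (5) contrapositively (producing a bump function annihilated by every infinite dimensional \ai representation), whereas the paper argues directly that the union of the induced sets $\inducedset$ must be dense; the underlying idea is identical.
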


\begin{proof}
From Proposition~\ref{prop:infinite_orbit_ideal}, we see that $\bigcap_{\corb\in\corbsetaperpoints}\prim_{\corb}=\{\sumn f_n\delta^n\in\lone : f_n\rest{\overline{\aperpoints}}=0\textup{ for all }n\in\Z\}$. Hence (5) implies (1). Trivially (1) implies (2), and (2) implies (4). Assume that (4) holds, and that $\{\rep_i\}_{i\in I}$ is a family of infinite dimensional \ai representations that separates the points of $\coeffalg$. Each of these has an induced dynamical system $\topspace_{\rep_i, \homeo_{\rep_i}}$ and an associated representation $\rep^\prime_i$ of $\ell^1(\Sigma_{\rep_i})$ such that $\rep_i(f)=\rep_i^\prime(f\rest{\topspace_{\rep_i}})$  for $f\in\coeffalg$. The separation property of $\{\rep_i\}_{i\in I}$ implies that $\bigcup_{i\in I}\topspace_{\rep_i}$ is dense in $\topspace$. Since the aperiodic points are dense in each $\topspace_{\rep_i}$ by Corollary~\ref{cor:properties_induced_system}, we can now conclude that $\aperpoints$ is dense in $\topspace$, i.e.\  that (5) holds. It is trivial that (1) implies (3) and that (3) implies (4).

The equivalence of (3) and (4) follows from \cite[Proposition~4]{T3}, taken together with Theorem~\ref{thm:cstar_reps}.
\end{proof}

Part (1), (3), and (4) of the following result on inclusions between primitive ideals are from \cite[Proposition~2.15]{JTBJMA}; we include the short proofs for the convenience of the reader.

\begin{lemma}\quad\label{lem:possible_inclusions}
\begin{enumerate}
 \item Let $\corb_1,\corb_2\in\corbsetperpoints$ and let $\lambda_1,\lambda_2\in\T$. If $\prim_{\corb_1,\lambda_1}\subset\prim_{\corb_2,\lambda_2}$, then $\corb_1=\corb_2$ and $\lambda_1=\lambda_2$. In particular, $\prim_{\corb,\lambda_1}=\prim_{\corb,\lambda_2}$.
 \item If $\corb\in\corbsetperpoints$, $\lambda\in\T$, $\pt\in\corb$, and $\rep$ is an \ai representation of $\lone$ such that $\prim_{\corb, \lambda}\subset \Ker(\rep)$, then $\rep$ is algebraically equivalent to $\rep_{\pt, \lambda}$. In particular,  $\prim_{\corb, \lambda}=\Ker(\rep)$.
 \item Let $\corb_1,\corb_2\in\corbsetaperpoints$. Then $\prim_{\corb_1}\subset\prim_{\corb_2}$ if and only if $\corb_1\supset\corb_2$.
 \item Let $\corb_1\in\corbsetaperpoints$, let $\corb_2\in\corbsetperpoints$, and let $\lambda\in\T$. Then $\prim_{\corb_1}\subset\prim_{\corb_2,\lambda}$ if and only if $\corb_1\supset\corb_2$.
 \end{enumerate}
 \end{lemma}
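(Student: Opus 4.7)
My plan is to prove part (2) first, since (1) follows easily from it, and then treat (3) and (4) by a direct topological argument.

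For (2), the idea is to factor the given algebraically irreducible representation $\rep$ through the quotient $\lone/\prim_{\corb,\lambda}$. By definition $\prim_{\corb,\lambda}=\Ker(\rep_{\pt,\lambda})$, and $\rep_{\pt,\lambda}$ is algebraically irreducible on the finite-dimensional space $\hs_{\pt,\lambda}$ of dimension $p$ (Proposition~\ref{prop:finite_dimensional_given}). The Jacobson density theorem (Burnside in the finite-dimensional setting) then forces $\rep_{\pt,\lambda}(\lone)=\bounded(\hs_{\pt,\lambda})\cong M_p(\C)$, so that $\lone/\prim_{\corb,\lambda}\cong M_p(\C)$ as an algebra. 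The hypothesis $\prim_{\corb,\lambda}\subset\Ker(\rep)$ lets $\rep$ descend to an algebraically irreducible representation of $M_p(\C)$. Since the standard representation on $\C^p$ is the only algebraically irreducible representation of $M_p(\C)$ up to equivalence, $\rep$ must be algebraically equivalent to $\rep_{\pt,\lambda}$, and equality of kernels is immediate.

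For (1), I would apply (2) with $\rep=\rep_{y,\lambda_2}$ for any $y\in\corb_2$, noting that $\Ker(\rep_{y,\lambda_2})=\prim_{\corb_2,\lambda_2}\supset\prim_{\corb_1,\lambda_1}$ by hypothesis. Part (2) then yields $\rep_{y,\lambda_2}$ algebraically equivalent to $\rep_{\pt,\lambda_1}$ for any $\pt\in\corb_1$, and Proposition~\ref{prop:finite_dimensional_given} forces the orbits to coincide (so $\corb_1=\corb_2$) and $\lambda_1=\lambda_2$. The ``in particular'' assertion is then automatic by symmetry of the argument.

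For (3) and (4), I would rely on the explicit coefficient descriptions in Propositions~\ref{prop:infinite_orbit_ideal} and~\ref{prop:finite_orbit_ideal}. The forward implication $\corb_1\supset\corb_2\Rightarrow\prim_{\corb_1}\subset\prim_{\corb_2}$ (respectively $\prim_{\corb_2,\lambda}$) is a routine check: vanishing of each $f_n$ on $\corb_1$ implies vanishing on $\corb_2$, which in turn makes the finite sums $\suml\lambda^l f_{lp+j}(\pt)$ vanish for $\pt\in\corb_2$. For the reverse implication I would argue by contrapositive: if $y\in\corb_2\setminus\corb_1$, then $\corb_1$ being closed allows Urysohn's lemma to produce $f\in\coeffalg$ with $f\rest{\corb_1}=0$ and $f(y)\neq 0$. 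Viewing $f=f\delta^0\in\lone$, it lies in $\prim_{\corb_1}$ by Proposition~\ref{prop:infinite_orbit_ideal} but fails to lie in $\prim_{\corb_2}$ (respectively $\prim_{\corb_2,\lambda}$): the former is immediate, and for the latter the vanishing condition with $l=0$, $j=0$ reduces to $f(y)=0$, a contradiction.

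The main obstacle is part (2): the crucial algebraic step is recognizing that $\lone/\prim_{\corb,\lambda}$ is a full finite-dimensional matrix algebra, which requires Jacobson density on the finite-dimensional irreducible module $\hs_{\pt,\lambda}$. Once that is in hand, the uniqueness of irreducible modules over $M_p(\C)$ closes (2), and everything else is either a direct consequence of (2) and Proposition~\ref{prop:finite_dimensional_given}, or rests on Urysohn's lemma together with the coefficient descriptions of the primitive ideals.
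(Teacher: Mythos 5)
Your proposal is correct, but for parts (1) and (2) it takes a genuinely different route from the paper. The paper proves (1) first and directly: restricting the inclusion of kernels to $\coeffalg$ forces $\corb_1\supset\corb_2$ and hence $\corb_1=\corb_2$, and then evaluating both representations at $1-\delta^p/\lambda_1$ forces $\lambda_1=\lambda_2$. For (2) the paper only uses that $\lone/\prim_{\corb,\lambda}$ embeds into a finite dimensional algebra, so that $\rep$ is finite dimensional; it then invokes the exhaustion result (Proposition~\ref{prop:structure_of_finite_dimensional_representations}) to identify $\rep$ with some $\rep_{y,\mu}$, and finally part (1) together with Proposition~\ref{prop:finite_dimensional_given} to pin down $y$ and $\mu$. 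You reverse the logical order: you prove (2) first via Burnside's theorem, identifying $\lone/\prim_{\corb,\lambda}$ with the full matrix algebra $M_p(\C)$ and using the uniqueness of its simple module, and then deduce (1) by applying (2) to $\rep=\rep_{y,\lambda_2}$ and citing Proposition~\ref{prop:finite_dimensional_given}. Your argument is valid and avoids the exhaustion result entirely, at the cost of importing Burnside/Wedderburn; the paper's argument stays within its own previously established machinery. For (3) and (4) the two proofs are essentially the same, resting on the coefficient descriptions in Propositions~\ref{prop:infinite_orbit_ideal} and~\ref{prop:finite_orbit_ideal}; the only cosmetic difference is that you separate a single point $y\in\corb_2\setminus\corb_1$ from the closed set $\corb_1$, whereas the paper first observes that $\corb_1\cap\corb_2=\emptyset$ and separates the two closed sets. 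Both versions are correct.
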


\begin{proof}
For (1), we recall that $P_{\corb, \lambda}=\Ker(\rep_{\pt,\lambda})$, where $x\in \topspace$ is such that $\corb=\overline{\Z\cdot x}$. If $\prim_{\corb_1,\lambda_1}\subset\prim_{\corb_2,\lambda_2}$, then in particular $\Ker(\rep_{\pt_1,\lambda_1}\rest{\coeffalg})\subset\Ker(\rep_{\pt_2,\lambda_2}\rest{\coeffalg})$. This implies that (and is equivalent to) $\corb_1\supset\corb_2$; hence $\corb_1=\corb_2$. Then also the representation spaces have the same dimension $p$. Since $\rep_{\pt_1,\lambda_1}(1-\delta^p/\lambda_1)=0$, we have  $\rep_{\pt_2,\lambda_2}(1-\delta^p/\lambda_1)=0$, and therefore $\lambda_1=\lambda_2$.

Turning to (2), we first note that $\lone/\Ker(\rep)$ is a quotient of $\lone/\prim_{\corb,\lambda}$. Hence we have an algebraically irreducible representation of $\lone/\prim_{\corb,\lambda}$ on the representation space of $\rep$. Since $\lone/\prim_{\corb,\lambda}$ is a finite dimensional algebra, $\rep$ is finite dimensional. Proposition~\ref{prop:structure_of_finite_dimensional_representations} then shows that $\rep$ is algebraically equivalent to $\rep_{y,\mu}$ for some $y\in\perpoints$ and $\mu\in\T$. An appeal to part (1) and Proposition~\ref{prop:finite_dimensional_given} then concludes the proof of part (2).

Part (3) is clear from Proposition~\ref{prop:infinite_orbit_ideal}.

Turning to part (4), if $\corb_1\supset\corb_2$, then Proposition~\ref{prop:finite_orbit_ideal} and Proposition~\ref{prop:infinite_orbit_ideal} show that $\prim_{\corb_1}\subset\prim_{\corb_2,\lambda}$. If $\corb_1\not\supset\corb_2$, then $\corb_1\cap\corb_2=\emptyset$, so that there exists $f\in\coeffalg$ such that $f\rest{\corb_1}=0$ and $f\rest{\corb_2}\neq 0$. Then $f\in\prim_{\corb_1}$, but $f\notin\prim_{\corb_2,\lambda}$; hence $\prim_{\corb_1}\not \subset\prim_{\corb_2,\lambda}$.
\end{proof}

\begin{remark}\label{rem:never_equal}
The argument used in the proof of the second part of Lemma~\ref{lem:possible_inclusions} shows that the kernel of a finite dimensional \ai representation of $\lone$ (or any other associative algebra) is never contained in (and, in particular, never equal to) the kernel of an infinite dimensional \ai representation. This elementary observation is needed in the proof of Theorem~\ref{thm:basic_topology}.
\end{remark}

We let $\sspace$ denote the set of all primitive ideals of $\lone$. The following is now clear from Lemma~\ref{lem:possible_inclusions}.

\begin{proposition}\label{prop:psi_proposition}\quad
\begin{enumerate}
 \item
Consider the natural map
 \[
 \Psi: \left(\corbsetperpoints\times\T\right) \sqcup  \corbsetaperpoints\to\sspace,
 \]
 defined by $\Psi((\corb,\lambda))=\prim_{\corb,\lambda}$  for $\corb\in\corbsetperpoints$ and $\lambda\in\T$, and by $\Psi(\corb)=\prim_{\corb}$ for $\corb\in\corbsetaperpoints$. Then $\Psi$ is injective.
 \item For all $\corb\in\corbsetperpoints$ and $\lambda\in\T$, the primitive ideal $\prim_{\corb,\lambda}$ \ is a maximal primitive ideal.
 \item For all $\corb\in\corbsetaperpoints$, the primitive ideal $\prim_{\corb}$ is a maximal element of the range of $\Psi$ if and only if $\corb$ is a minimal orbit closure.
\end{enumerate}
\end{proposition}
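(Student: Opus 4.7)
The plan is to derive all three parts as direct bookkeeping from Lemma~\ref{lem:possible_inclusions}, with one appeal to Remark~\ref{rem:never_equal} to handle the ``mixed'' case in part (1). For part (1), I would check injectivity of $\Psi$ by cases on which pieces of the disjoint union $(\corbsetperpoints\times\T)\sqcup\corbsetaperpoints$ the two arguments lie in. Two periodic pairs giving the same ideal fall under Lemma~\ref{lem:possible_inclusions}(1), which yields both $\corb_1=\corb_2$ and $\lambda_1=\lambda_2$. Two aperiodic orbit closures giving the same ideal are handled by Lemma~\ref{lem:possible_inclusions}(3) applied with both inclusions, forcing $\corb_1\supset\corb_2$ and $\corb_2\supset\corb_1$. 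For the mixed case $\prim_\corb=\prim_{\corb',\lambda}$ with $\corb\in\corbsetaperpoints$ and $\corb'\in\corbsetperpoints$, the inclusion $\prim_{\corb',\lambda}\subset\prim_\corb=\Ker(\rep_\pt^1)$ together with Lemma~\ref{lem:possible_inclusions}(2) would make $\rep_\pt^1$ algebraically equivalent to some $\rep_{y,\lambda}$, which is ruled out on dimension grounds as noted in Remark~\ref{rem:never_equal}.

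For part (2), I would simply suppose $\prim_{\corb,\lambda}\subset Q$ where $Q=\Ker(\rep)$ for an \ai representation $\rep$ of $\lone$, and invoke Lemma~\ref{lem:possible_inclusions}(2) to conclude that $\rep$ is algebraically equivalent to $\rep_{\pt,\lambda}$; thus $Q=\prim_{\corb,\lambda}$, which is maximality among \emph{all} primitive ideals (stronger than what is claimed).

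For part (3), I would argue both directions by contraposition. If $\corb\in\corbsetaperpoints$ is not minimal, pick an orbit closure $\corb'\subsetneq \corb$. If $\corb'\in\corbsetaperpoints$, Lemma~\ref{lem:possible_inclusions}(3) gives $\prim_\corb\subset\prim_{\corb'}$, strict by part (1) of this proposition applied to the two different aperiodic orbit closures; if $\corb'\in\corbsetperpoints$, fix any $\lambda\in\T$ and use Lemma~\ref{lem:possible_inclusions}(4) to get $\prim_\corb\subset\prim_{\corb',\lambda}$, again strict by part (1) of this proposition. Conversely, assume $\corb$ is minimal and $\prim_\corb\subset Q$ for some $Q$ in the range of $\Psi$. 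If $Q=\prim_{\corb''}$ with $\corb''\in\corbsetaperpoints$, then Lemma~\ref{lem:possible_inclusions}(3) forces $\corb\supset\corb''$, and minimality of $\corb$ gives $\corb=\corb''$. If $Q=\prim_{\corb',\lambda}$ with $\corb'\in\corbsetperpoints$, Lemma~\ref{lem:possible_inclusions}(4) forces $\corb\supset\corb'$; since $\corb'$ is itself an orbit closure inside $\corb$, minimality would give $\corb=\corb'$, contradicting the fact that $\corb$ consists of aperiodic points while $\corb'$ consists of periodic points.

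The main obstacle, if any, is the mixed-type comparison in part (1): Lemma~\ref{lem:possible_inclusions} as stated only compares ideals of the same type (periodic/periodic, aperiodic/aperiodic, aperiodic/periodic inclusion), so ruling out equality $\prim_\corb=\prim_{\corb',\lambda}$ requires combining Lemma~\ref{lem:possible_inclusions}(2) with the dimension observation of Remark~\ref{rem:never_equal}. Once that is pinned down, the remaining steps are essentially mechanical translations of the four clauses of Lemma~\ref{lem:possible_inclusions} into statements about $\Psi$.
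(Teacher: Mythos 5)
Your proposal is correct and follows the paper's intended route: the paper offers no written proof beyond the remark that the proposition ``is now clear from Lemma~\ref{lem:possible_inclusions}'', and your case analysis is exactly the bookkeeping that remark presupposes. You also correctly identify the one non-mechanical point, namely that the mixed comparison $\prim_{\corb}=\prim_{\corb',\lambda}$ in part (1) is not covered verbatim by the four clauses of Lemma~\ref{lem:possible_inclusions} and must be excluded via Lemma~\ref{lem:possible_inclusions}(2) together with the dimension observation of Remark~\ref{rem:never_equal}.
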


Thus $\Psi$ parameterizes the part of $\sspace$ that is naturally associated with the points of $\topspace$, and we shall denote this part by $\sspaceset{\topspace}$. It is always large in $\sspace$ in the sense that, as a consequence of Theorem~\ref{thm:semisimple}, it is dense in the hull-kernel topology of $\sspace$ to be studied in Section~\ref{sec:structure_space}. The subset of $\sspaceset{\topspace}$ that consists of the primitive ideals $\prim_{\corb,\lambda}$ for all $\corb\in\corbsetperpoints$ and $\lambda\in\T$ is denoted by $\sspaceset{\perpoints}$, and the subset that consists of the primitive ideals $\prim_{\corb}$ for all $\corb\in\corbsetaperpoints$ is denoted  by $\sspaceset{\aperpoints}$, so that $\sspaceset{\topspace}=\sspaceset{\perpoints}\sqcup\sspaceset{\aperpoints}$ is a disjoint union. We let $\sspaceinfdim$ denote the subset of $\sspace$ consisting of the kernels of all infinite dimensional \ai representations, so that $\sspace=\sspaceset{\perpoints}\sqcup\sspaceinfdim$ is a disjoint union.

It is remarkable that we can prove that $\sspaceset{\topspace}$ actually equals $\sspace$ if $\topspace$ is metrizable, even though we do not generally know all equivalence classes of \ai representations. This is a consequence of the following result, the proof of which once more illustrates the relevance of the technique of induced dynamical systems.

\begin{proposition}\label{prop:must_be_dynamical}
 Let $\rep$ be an \ai representation of $\lone$ on an infinite dimensional vector space. If the topological space $\inducedset$ is metrizable, then there exists an aperiodic point $\pt\in \inducedset$ such that its orbit is dense in $\inducedset$. Consequently, $\Ker(\rep) = \prim_{\corb}$, where $\corb$ is the closure of the orbit of $\pt$.
\end{proposition}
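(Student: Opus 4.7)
The plan is to combine what Corollary~\ref{cor:properties_induced_system} already tells us about $\inducedsystemshort$ with a standard Baire category argument for transitive dynamical systems on compact metric spaces, and then extract the kernel statement from Proposition~\ref{prop:self_adjoint} together with Proposition~\ref{prop:infinite_orbit_ideal}.

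First I would apply Corollary~\ref{cor:properties_induced_system} to obtain that $\inducedsystemshort$ is topologically transitive and topologically free, and that $\inducedset$ is infinite. So the situation is that of a topologically transitive homeomorphism on an infinite compact \emph{metrizable} space.

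Next I would produce a point with dense orbit by the familiar Baire argument. Since $\inducedset$ is compact metrizable, pick a countable base $\{V_k\}_{k\geq 1}$ of non-empty open subsets. By topological transitivity each
\[
U_k:=\bigcupn \inducedhomeo^{-n}(V_k)
\]
is open and dense in $\inducedset$. As a compact Hausdorff metrizable space, $\inducedset$ is a Baire space, so $\bigcap_{k\geq 1} U_k$ is dense; in particular it is non-empty. Any $\pt$ in this intersection satisfies $\Z\cdot \pt \cap V_k\neq\emptyset$ for every $k$, so $\Z\cdot \pt$ is dense in $\inducedset$. Because $\inducedset$ is infinite while a periodic orbit is finite (hence closed in the Hausdorff space $\inducedset$, so its closure is itself), such a $\pt$ cannot be periodic. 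Therefore $\pt\in\aperpoints$, and $\corb:=\overline{\Z\cdot \pt}=\inducedset$.

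Finally, I would identify the kernel. By Remark~\ref{rem:can_be_assigned} we may assume $\rep$ is continuous (even contractive), so that Proposition~\ref{prop:self_adjoint} applies in view of the topological freeness of $\inducedsystemshort$. It gives
\[
\Ker(\rep)=\Ker(\inducedrest)=\Kernel(\inducedset),
\]
the closed ideal of $\lone$ generated by $\{f\in\coeffalg : f\rest{\inducedset}=0\}$. Since $\inducedset=\corb$, Proposition~\ref{prop:infinite_orbit_ideal} identifies this ideal as $\prim_{\corb}$.

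There is no real obstacle here: the only non-routine ingredient is the existence of a dense orbit, and in a compact metric setting this is immediate from Baire category. Everything else\textemdash topological freeness, infiniteness of $\inducedset$, selfadjointness and explicit description of the kernel\textemdash is supplied by results already established earlier in the paper. The metrizability hypothesis enters exclusively to run the Baire argument producing the dense orbit; without it the existence of a transitive point would not be automatic.
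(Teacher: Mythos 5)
Your proof is correct and follows essentially the same route as the paper: Corollary~\ref{cor:properties_induced_system} for transitivity, freeness and infiniteness of $\inducedset$, then a transitive point from metrizability, then Proposition~\ref{prop:self_adjoint} and Proposition~\ref{prop:infinite_orbit_ideal} for the kernel. The only difference is that you write out the standard Baire category argument for the existence of a dense orbit, whereas the paper simply cites \cite[Theorem~1.1.3]{T1} for that fact.
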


\begin{proof}
Let $\inducedsystemshort$ be the induced dynamical system. Since $\rep$ is infinite dimensional, Corollary~\ref{cor:properties_induced_system} shows that $\inducedsystemshort$ is topologically free, and hence Proposition~\ref{prop:self_adjoint} implies that $\Ker(\rep)$ is the closed ideal generated by $\{f\in\coeffalg : f\rest{\inducedset}=0\}$.

Since Corollary~\ref{cor:properties_induced_system} asserts that $\inducedset$ is topologically transitive, the metrizability of $\inducedset$ then implies, by \cite[Theorem~1.1.3]{T1}, that there exists a point $\pt\in \inducedset$ such that its orbit is dense in $\inducedset$. Since Corollary~\ref{cor:properties_induced_system} also asserts that $\inducedset$ is an infinite set, $\pt$ must be aperiodic. If we note that $\inducedset$ is closed, we see that $\inducedset$ is the closure in $\topspace$ of the orbit of $\pt$. The rest is clear.
\end{proof}

When observing that finite orbits are clearly minimal orbit closures, the following is now obvious from Propositions~\ref{prop:psi_proposition} and~\ref{prop:must_be_dynamical}; note that Proposition~ \ref{prop:all_finite_dimensional} asserts that there are no infinite dimensional \ai representations of $\lone$ at all if $\topspace=\perpoints$.

\begin{theorem}\label{thm:exhaustive} If, for every infinite dimensional \ai representation $\rep$ of $\lone$, the closed subset $\inducedset$ of $\topspace$ is metrizable \ulb this is certainly the case  if $\topspace$ is metrizable or $\topspace=\perpoints$\urb, then:
\begin{enumerate}
 \item $\sspaceinfdim=\sspaceset{\aperpoints}$, and hence $\sspace=\sspaceset{\topspace}$;
 \item A primitive ideal is a maximal primitive ideal if and only if it is associated with a minimal orbit closure.
 \end{enumerate}
\end{theorem}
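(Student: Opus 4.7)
The plan is to reduce the theorem almost entirely to Proposition~\ref{prop:must_be_dynamical} and Proposition~\ref{prop:psi_proposition}; the substantive work has already been done. First I would dispose of the parenthetical sufficient conditions on the hypothesis: if $\topspace$ is metrizable then every closed subset is metrizable and in particular every $\inducedset$ is; and if $\topspace=\perpoints$, then Proposition~\ref{prop:all_finite_dimensional} rules out infinite dimensional \ai representations of $\lone$ altogether, so the hypothesis holds vacuously.

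For part~(1), I would take an arbitrary $P\in\sspaceinfdim$, write $P=\Ker(\rep)$ for some infinite dimensional \ai representation $\rep$, and apply Proposition~\ref{prop:must_be_dynamical}, whose hypothesis is exactly what we have assumed. This yields an aperiodic $\pt\in\inducedset$ whose orbit closure $\corb$ satisfies $P=\prim_\corb$, so $\sspaceinfdim\subset\sspaceset{\aperpoints}$. The reverse inclusion is automatic: by the construction preceding Proposition~\ref{prop:infinite_orbit_ideal}, each $\prim_\corb$ with $\corb\in\corbsetaperpoints$ is the kernel of the representation $\rep_\pt^1$ on the infinite dimensional space $\ell^1(\Z)$, for any $\pt$ with $\overline{\Z\cdot\pt}=\corb$. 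The previously recorded decomposition $\sspace=\sspaceset{\perpoints}\sqcup\sspaceinfdim$ then gives $\sspace=\sspaceset{\perpoints}\sqcup\sspaceset{\aperpoints}=\sspaceset{\topspace}$, and Proposition~\ref{prop:psi_proposition}(1) upgrades $\Psi$ to a bijection onto $\sspace$.

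For part~(2), I would combine this bijectivity with Proposition~\ref{prop:psi_proposition}(2) and~(3). Every $\prim_{\corb,\lambda}$ with $\corb\in\corbsetperpoints$ is unconditionally a maximal primitive ideal by Proposition~\ref{prop:psi_proposition}(2), and it is associated with a finite orbit, which is trivially a minimal orbit closure since it has no proper non-empty invariant closed subset. Every $\prim_\corb$ with $\corb\in\corbsetaperpoints$ is, by Proposition~\ref{prop:psi_proposition}(3), maximal within the range of $\Psi$ exactly when $\corb$ is a minimal orbit closure; since that range is now all of $\sspace$, this translates at once into maximality in $\sspace$ under the same condition. Putting the two cases together gives the stated equivalence.

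The main obstacle has already been cleared in the preceding section: Proposition~\ref{prop:must_be_dynamical} rests on Corollary~\ref{cor:properties_induced_system} (topological freeness of $\inducedsystemshort$) together with the classical fact that a topologically transitive action on a metrizable compactum admits a dense orbit. The present theorem is simply the harvest of that machinery, and beyond the trivial verification that finite orbits are minimal orbit closures no further argument is needed.
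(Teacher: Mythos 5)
Your proposal is correct and follows essentially the same route as the paper, which derives the theorem from Propositions~\ref{prop:psi_proposition} and~\ref{prop:must_be_dynamical} together with the observation that finite orbits are minimal orbit closures and the appeal to Proposition~\ref{prop:all_finite_dimensional} for the case $\topspace=\perpoints$. You have merely written out the details (the two inclusions in part~(1) via the decomposition $\sspace=\sspaceset{\perpoints}\sqcup\sspaceinfdim$, and the case split in part~(2)) that the paper leaves implicit.
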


\begin{remark}\label{rem:type_I}
To each algebraic equivalence class of \ai representations of $\lone$ one can assign the primitive ideal that is the common kernel of these representations. It follows from part (1) of Lemma~\ref{lem:possible_inclusions} and Proposition~\ref{prop:finite_dimensional_given} that this map is injective on the collection of algebraic equivalence classes of finite dimensional \ai representations of $\lone$.

On the collection of algebraic equivalence classes of infinite dimensional \ai representations, however, this map can be very far from injective. For example, for the irrational rotations of $\T$, where every orbit is infinite and dense, $\sspace$ consists only of the zero ideal. Even though Theorem~\ref{prop:equivalences} shows that different orbits provide different equivalence classes of \ai representations, the associated primitive ideals are always equal to the zero ideal. Proposition~\ref{prop:degenerate} below describes when this happens for general systems.
\end{remark}

\section{The structure space}\label{sec:structure_space}

We shall now consider the structure space of $\lone$, i.e.\ $\sspace$ in its hull-kernel topology. The main goal is Theorem~\ref{thm:main_homeomorphism}, asserting that, under suitable conditions, parts of $\sspace$ are homeomorphic to products of spaces of finite orbits and $\T$.

We recall from \cite[\S 26]{BD} that, for $E\subset\sspace$, the closure of $E$ in this topology is the hull $\hull\kernel(E)$ of the kernel $\kernel(E)$, where $\kernel(E)=\bigcap_{\prim\in E}P$, and $\hull\kernel(E)=\{P\in\sspace : P\supset\kernel(E)\}$. Clearly, $E$ is dense in $\sspace$ if and only if $\kernel(E)$ equals the Jacobson radical $\bigcap_{\prim\in\sspace} P$ of $\lone$. Since this is the zero ideal by Theorem~\ref{thm:semisimple}, $E$ is dense if and only if $\kernel(E)=\{0\}$. Thus, for example, if there exists an aperiodic point with dense orbit, then the associated singleton $\{\prim_{\corb}\}=\{\{0\}\}$ is a dense subset of $\sspace$.

We shall now rephrase some of the results in Section~\ref{sec:primitive_ideals} in terms of $\sspace$ and its topology. Here, and elsewhere, subsets of $\sspace$ are supplied with the induced topologies from $\sspace$ unless otherwise stated. Proposition~\ref{prop:topological_complements} contains two more results in this vein.

\begin{theorem}\label{thm:basic_topology}\quad
 \begin{enumerate}
  \item $\sspace$ is compact.
  \item $\sspaceset{\topspace}$ is dense in $\sspace$.
  \item The following are equivalent:
  \begin{enumerate}
  \item $\sspaceset{\perpoints}=\sspace$;
  \item $\perpoints=\topspace$.
 \end{enumerate}
  \item The following are equivalent:
  \begin{enumerate}
  \item $\sspaceset{\perpoints}$ is dense in $\sspace$;
  \item $\perpoints$ is dense in $\topspace$.
  \end{enumerate}
 \item The following are equivalent:
  \begin{enumerate}
  \item $\sspaceinfdim=\sspace$;
  \item $\aperpoints=\topspace$.
 \end{enumerate}
  \item The following are equivalent:
  \begin{enumerate}
  \item $\sspaceset{\aperpoints}$ is dense in $\sspace$;
  \item $\sspaceinfdim$ is dense in $\sspace$;
  \item $\aperpoints$ is dense in $\topspace$.
  \end{enumerate}
  \item
  \begin{enumerate}
    \item For all $\corb\in\corbsetperpoints$ and $\lambda\in\T$, the singleton $\{\prim_{\corb,\lambda}\}$ is closed in $\sspace$.
    \item For all $\corb\in\corbsetaperpoints$,  the singleton $\{\prim_{\corb}\}$ is closed in $\sspaceset{\topspace}$ if and only if $\corb$ is a minimal orbit closure.
    \end{enumerate}
  \item  If, for every infinite dimensional \ai representation $\rep$ of $\lone$, the closed subset $\inducedset$ of $\topspace$ is metrizable \ulb this is certainly the case if $\topspace$ is metrizable or $\topspace=\perpoints$\urb, then:
  \begin{enumerate}
  \item $\sspaceset{\topspace}=\sspace$, and $\sspaceset{\topspace}$ is compact;
  \item If $\prim\in\sspace$, then the singleton $\{\prim\}$ is closed in $\sspace$ if and only if $\prim$ is associated with a minimal orbit closure.
  \end{enumerate}

 \end{enumerate}
\end{theorem}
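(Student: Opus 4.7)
The guiding principle is the standard dictionary for the hull-kernel topology: a subset $E\subset\sspace$ is dense if and only if $\kernel(E)=\{0\}$ (by Theorem~\ref{thm:semisimple}, which asserts semisimplicity), and for $P_0\in\sspace$ the closure of $\{P_0\}$ is the set of primitive ideals containing $P_0$. Most parts of the theorem are then obtained by translating between topological statements about $\sspace$ and algebraic statements already established in Section~\ref{sec:primitive_ideals}.

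For part (1), I would invoke the general fact that the structure space of any unital Banach algebra is compact in the hull-kernel topology: a basic open cover $\{U_{a_i}=\{P\in\sspace: a_i\notin P\}\}$ of $\sspace$ corresponds to a family $\{a_i\}$ whose generated two-sided ideal is not contained in any primitive ideal, hence (since $\lone$ is unital) equals $\lone$; a finite subcover is extracted from an expression of $1$. Part (2) is immediate from Theorem~\ref{thm:semisimple}, since the kernel of $\sspaceset{\topspace}$ is $\{0\}$.

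For (3)--(6), I would use the decompositions $\sspace=\sspaceset{\perpoints}\sqcup\sspaceinfdim$ and $\sspaceset{\topspace}=\sspaceset{\perpoints}\sqcup\sspaceset{\aperpoints}$. For (3): $\sspaceset{\perpoints}=\sspace$ iff $\sspaceinfdim=\emptyset$ iff there is no infinite dimensional \ai representation (these give genuinely different primitive ideals by Remark~\ref{rem:never_equal}), which by Proposition~\ref{prop:all_finite_dimensional} is equivalent to $\perpoints=\topspace$; (5) is the dual statement via Proposition~\ref{prop:all_infinite_dimensional}. For (4), density of $\sspaceset{\perpoints}$ is equivalent to $\kernel(\sspaceset{\perpoints})=\{0\}$, which by Proposition~\ref{prop:finite_dimensional_separate} is equivalent to $\perpoints$ being dense in $\topspace$. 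For (6), the chain $\sspaceset{\aperpoints}\subset\sspaceinfdim$ gives one implication for free, and density of either is equivalent (by Proposition~\ref{prop:infinite_dimensional_separate}, applied to $\sspaceset{\aperpoints}$ and to the full family of infinite dimensional kernels) to $\aperpoints$ being dense in $\topspace$.

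For (7), the closure of $\{\prim_{\corb,\lambda}\}$ in $\sspace$ is $\{P\in\sspace : P\supset\prim_{\corb,\lambda}\}$. By Remark~\ref{rem:never_equal} no infinite dimensional primitive ideal can contain a finite dimensional one, and by Lemma~\ref{lem:possible_inclusions}(1) no other $\prim_{\corb',\lambda'}$ contains $\prim_{\corb,\lambda}$ unless $(\corb',\lambda')=(\corb,\lambda)$; hence $\{\prim_{\corb,\lambda}\}$ is closed. For (7)(b), the closure of $\{\prim_{\corb}\}$ in $\sspaceset{\topspace}$ equals the hull intersected with $\sspaceset{\topspace}$; by Lemma~\ref{lem:possible_inclusions}(3)--(4) this consists of those $\prim_{\corb'}$ with $\corb'\subset\corb$ together with those $\prim_{\corb'',\lambda}$ with $\corb''\subset\corb$, which reduces to $\{\prim_{\corb}\}$ precisely when $\corb$ is a minimal orbit closure. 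Part (8) is then a direct combination of Theorem~\ref{thm:exhaustive} with (1) and (7): under its hypothesis $\sspace=\sspaceset{\topspace}$, so (a) follows from (1) and (2), while (b) follows from (7) once one observes that in this enlarged setting the closure of $\{\prim_{\corb}\}$ is computed in all of $\sspace=\sspaceset{\topspace}$.

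The main obstacle, and the step I would handle most carefully, is making sure each density/equality translation invokes precisely the right separation proposition and that the subtle point in (7)(a) --- that $\{\prim_{\corb,\lambda}\}$ is closed in the whole of $\sspace$, not merely in $\sspaceset{\topspace}$ --- is justified using Remark~\ref{rem:never_equal} to rule out infinite dimensional primitive ideals containing a maximal finite dimensional one.
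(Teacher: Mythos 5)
Your proposal is correct and follows essentially the same route as the paper, which proves part (1) by citing the general compactness of the structure space of a unital Banach algebra and derives parts (2)--(8) from Theorem~\ref{thm:semisimple}, Propositions~\ref{prop:all_finite_dimensional}, \ref{prop:finite_dimensional_separate}, \ref{prop:all_infinite_dimensional}, \ref{prop:infinite_dimensional_separate}, \ref{prop:psi_proposition} and Theorem~\ref{thm:exhaustive}, with Remark~\ref{rem:never_equal} and Theorem~\ref{thm:description_finite_dimensional_representations} used exactly where you use them. Your write-up merely makes explicit the hull-kernel translations that the paper leaves to the cited results (your appeal to Lemma~\ref{lem:possible_inclusions} in part (7) is just unwinding Proposition~\ref{prop:psi_proposition}), so there is no substantive difference.
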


\begin{proof}
Part (1) follows from the fact that $\lone$ is a unital algebra (see \cite[Corollary~26.5]{BD}). Part (2) through (7) follow from Theorem~\ref{thm:semisimple}, Proposition~\ref{prop:all_finite_dimensional},
 Proposition~\ref{prop:finite_dimensional_separate}, Proposition~\ref{prop:all_infinite_dimensional}, Proposition~\ref{prop:infinite_dimensional_separate}, Proposition~\ref{prop:psi_proposition}, and Theorem~\ref{thm:exhaustive}, respectively, taking Remark~\ref{rem:never_equal} and Theorem~\ref{thm:description_finite_dimensional_representations} into account where necessary.
Part (8)(a) follows from Theorem~\ref{thm:exhaustive}.(1) and part (1). Noting that finite orbits are minimal orbit closures, part (8)(b) follows from the fact that $\sspaceset{\topspace}=\sspace$, combined with part (7).
\end{proof}

Before we proceed, we note that we can determine when $\sspaceset{\topspace}$ degenerates to its minimal size of one point: this occurs precisely for infinite minimal systems. Note that, for metrizable $\topspace$, this is equivalent to the degeneracy of the whole primitive ideal space $\sspace= \sspaceset{\topspace}$.

\begin{proposition}\label{prop:degenerate}
The following are equivalent:
\begin{enumerate}
 \item $\sspaceset{\topspace}$ consists of one point;
 \item $\sspaceset{\topspace}=\{\{0\}\}$;
 \item $\topspace=\aperpoints$, and every orbit is dense.
\end{enumerate}

\end{proposition}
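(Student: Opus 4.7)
The plan is to establish the cycle (3) $\Rightarrow$ (2) $\Rightarrow$ (1) $\Rightarrow$ (3), where only the last implication requires a little work.

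For (3) $\Rightarrow$ (2), assume $\topspace=\aperpoints$ and that every orbit is dense. Since $\corbsetperpoints=\emptyset$, the disjoint decomposition $\sspaceset{\topspace}=\sspaceset{\perpoints}\sqcup\sspaceset{\aperpoints}$ reduces to $\sspaceset{\topspace}=\sspaceset{\aperpoints}$. Density of every orbit means that for any $\pt\in\topspace$, $\overline{\Z\cdot \pt}=\topspace$, so $\corbsetaperpoints=\{\topspace\}$. By Proposition~\ref{prop:infinite_orbit_ideal}, the associated primitive ideal is
\[
\prim_{\topspace}=\left\{\loneelement{n}\in\lone : f_n\rest{\topspace}=0 \textup{ for all } n\in\Z\right\}=\{0\},
\]
and hence $\sspaceset{\topspace}=\{\{0\}\}$.

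The implication (2) $\Rightarrow$ (1) is trivial.

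For (1) $\Rightarrow$ (3), the main tool is the injectivity of the parameterisation map
\[
\Psi: \left(\corbsetperpoints\times\T\right) \sqcup  \corbsetaperpoints\to\sspace
\]
from Proposition~\ref{prop:psi_proposition}. If $\sspaceset{\topspace}$ is a singleton, then, by injectivity, so is its domain. Since $|\T|>1$, we cannot have $\corbsetperpoints$ non-empty; otherwise the factor $\T$ would already contribute at least two distinct primitive ideals. Therefore $\corbsetperpoints=\emptyset$, i.e.\ $\perpoints=\emptyset$, so $\topspace=\aperpoints$. Moreover, $\corbsetaperpoints$ must consist of a single orbit closure $\corb$. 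But $\topspace=\bigcup_{y\in\topspace}\overline{\Z\cdot y}$, and every one of these closures equals $\corb$, so $\topspace=\corb$. Thus every orbit is dense, establishing (3). No step here is a real obstacle; the content is already packaged in the injectivity of $\Psi$ and in Proposition~\ref{prop:infinite_orbit_ideal}.
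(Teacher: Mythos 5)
Your proof is correct, but it runs the cycle in the opposite direction from the paper and leans on different earlier results. The paper proves (1)~$\Rightarrow$~(2) in one stroke from Theorem~\ref{thm:semisimple}: if $\sspaceset{\topspace}$ is a single point $\prim$, then $\prim$ equals the intersection of all ideals in $\sspaceset{\topspace}$, which is $\{0\}$ by semisimplicity; it then gets (2)~$\Rightarrow$~(3) by noting that a periodic point would produce non-zero primitive ideals and that $\prim_{\corb}=\{0\}$ forces the orbit to be dense, and (3)~$\Rightarrow$~(1) is immediate. You instead extract (3) directly from (1) via the injectivity of $\Psi$ in Proposition~\ref{prop:psi_proposition}: a singleton image forces a singleton domain, which rules out $\corbsetperpoints\neq\emptyset$ (since $\T$ contributes at least two parameters) and leaves a single orbit closure, necessarily all of $\topspace$; you then recover (2) from (3) by the explicit description in Proposition~\ref{prop:infinite_orbit_ideal}. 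The paper's route is slightly more economical in that (1)~$\Rightarrow$~(2) needs only the single fact that the Jacobson radical restricted to $\sspaceset{\topspace}$ vanishes, whereas your route trades that for the (already established, but less trivial) injectivity of $\Psi$, which ultimately rests on Lemma~\ref{lem:possible_inclusions}. Both arguments are short and fully justified by results available at that point in the paper, so the difference is one of emphasis rather than substance.
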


\begin{proof}
 It follows from Theorem~\ref{thm:semisimple} that (1) implies (2). If (2) holds, then all points must be aperiodic, since a periodic point yields non-zero primitive ideals; it is then clear that every orbit must be dense. Hence (2) implies (3), and it is obvious that (3) implies (1).
\end{proof}

Continuing with the main line, we consider natural subsets of $\sspace$ that are associated with more general invariant subsets of $\topspace$ than $\topspace$, $\perpoints$, or $\aperpoints$, as we have done so far. With notation consistent with that already introduced, we define, for  invariant $S\subset\topspace$,
\[
\sspaceset{S}=\{\prim_{\corb,\lambda} : \corb\in\corbsetperpoints,\, \lambda\in\T,\, \orb\subset S\}\,\cup\,\{P_{\corb} : \corb\in\corbsetaperpoints,\, \orb\subset S\}.
\]
Hence $\sspaceset{S}$ consists of all primitive ideals associated with the closures of all orbits contained in $S$; note that these closures themselves need not be contained in $S$.

For further investigation of such subsets of $\sspace$, the following lemma is convenient.

\begin{lemma}\label{lem:closure}
Let $S\subset\topspace$ be invariant. Then:
\begin{enumerate}
 \item If $\corb\in\corbsetperpoints$ and $\lambda\in\T$, then $\{\loneelement{n}\in\lone : f_n\rest{S}=0 \textup{ for all } n\in\Z\}\subset\prim_{\corb,\lambda}$ if and only if $\corb\subset \overline{S}$.
 \item If $\corb\in\corbsetaperpoints$, then $\{\loneelement{n}\in\lone : f_n\rest{S}=0 \textup{ for all } n\in\Z\}\subset\prim_{\corb}$ if and only if $\corb\subset \overline{S}$.
\end{enumerate}
\end{lemma}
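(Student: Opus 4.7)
The plan is to set $I_S := \{\sumn f_n\delta^n\in\lone : f_n\rest{S}=0 \text{ for all }n\in\Z\}$ and observe first that, because each $f_n$ is continuous, the condition $f_n\rest{S}=0$ is equivalent to $f_n\rest{\overline{S}}=0$. Hence $I_S=\Kernel(\overline{S})$ in the notation introduced earlier, and the two parts of the lemma will follow from the explicit descriptions of $\prim_{\corb,\lambda}$ and $\prim_\corb$ already in hand (Propositions~\ref{prop:finite_orbit_ideal} and~\ref{prop:infinite_orbit_ideal}).

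For the ``if'' direction of both parts, I would suppose $\corb\subset\overline{S}$ and let $a=\sumn f_n\delta^n\in I_S$. Then each $f_n$ vanishes on $\overline{S}$ and a fortiori on $\corb$. In case (2), Proposition~\ref{prop:infinite_orbit_ideal} gives $a\in\prim_\corb$ directly. In case (1), part~(2) of Proposition~\ref{prop:finite_orbit_ideal} yields $a\in\bigcap_{\mu\in\T}\prim_{\corb,\mu}\subset\prim_{\corb,\lambda}$. So $I_S\subset\prim_{\corb,\lambda}$ (respectively $I_S\subset\prim_\corb$).

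For the ``only if'' direction I would argue contrapositively. Suppose $\corb\not\subset\overline{S}$, and pick $y\in\corb\setminus\overline{S}$. Since $\topspace$ is compact Hausdorff (hence normal), Urysohn's lemma gives an $f\in\coeffalg$ with $f(y)=1$ and $f\rest{\overline{S}}=0$. Viewed as $f\delta^0\in\lone$, this element belongs to $I_S$. It remains to show $f\notin\prim_{\corb,\lambda}$ (respectively $f\notin\prim_\corb$). In case (2) this is immediate from Proposition~\ref{prop:infinite_orbit_ideal}, because $f(y)\neq 0$ with $y\in\corb$. In case (1), for a periodic orbit $\corb=\orb$, so $y=\homeo^{j_0}\pt$ for some $\pt\in\orb$ and some $j_0$ with $0\leq j_0\leq p-1$. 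Applying part~(1) of Proposition~\ref{prop:finite_orbit_ideal} to $a=f\delta^0$, only the coefficient at $n=0$ is non-zero, so membership of $a$ in $\prim_{\corb,\lambda}$ would force $f(\homeo^j \pt)=0$ for all $j=0,\dots,p-1$; but $f(\homeo^{j_0}\pt)=f(y)=1$, a contradiction.

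There is no serious obstacle here: the statement is essentially a translation, via the explicit coefficient-wise descriptions of $\prim_{\corb,\lambda}$ and $\prim_\corb$, of the elementary fact that $C(\topspace)$ separates closed sets. The only point requiring a touch of care is that $I_S$ is defined by vanishing on $S$ rather than on $\overline{S}$, which is resolved at the outset by the continuity of the coefficient functions.
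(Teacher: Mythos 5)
Your proof is correct and takes essentially the same route as the paper's: both reduce the statement to the coefficient-wise descriptions of $\prim_{\corb,\lambda}$ and $\prim_{\corb}$ and, for the ``only if'' direction, produce a Urysohn function vanishing on $\overline{S}$ but not on $\corb$. The paper's version is merely terser (it notes that invariance and finiteness force $\corb\cap\overline{S}=\emptyset$ and declares the remaining implications clear), while you spell out the same steps, including the correct use of part (1) of Proposition~\ref{prop:finite_orbit_ideal} for the element $f\delta^0$.
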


\begin{proof}
For (1), if $\corb\not\subset \overline{S}$, then the invariance of $S$ and the finiteness of $\corb $ imply that $\corb\cap \overline S=\emptyset$. Hence there exists $f\in\coeffalg$ such that $f\rest{\overline S}=0$ and $f\rest{\corb}\neq 0$. Then $f\in\{\loneelement{n}\in\lone : f_n\rest{S}=0 \textup{ for all } n\in\Z\}$, but $f\notin\prim_{\corb,\lambda}$. Hence $\{\loneelement{n}\in\lone : f_n\rest{S}=0 \textup{ for all } n\in\Z\}\not\subset\prim_{\corb}$. The converse implication in (1) is clear. Part (2) is obvious.
\end{proof}

For invariant $S\subset \topspace$, we can now describe the closure of $\sspaceset{S}$ in $\sspaceset{\topspace}$.

\begin{proposition}\label{prop:closure}\quad
\begin{enumerate}
\item Let $S\subset\topspace$ be invariant. Then the closure of $\sspaceset{S}$ in $\sspaceset{\topspace}$ is $\sspaceset{\overline S}$.
\item Let $S_1\subset S_2\subset\perpoints$ be two invariant subsets. Then:
\begin{enumerate}
\item The closure of $\sspaceset{S_1}$ in $\sspaceset{S_2}$ is $\sspaceset{{\overline S_1}^{S_2}}$, where ${\overline S_1}^{S_2}$ is the closure of $S_1$ in $S_2$.
\item $\sspaceset{S_1}$ is closed in $\sspaceset{S_2}$ if and only if $S_1$ is closed in $S_2$.
\end{enumerate}
\end{enumerate}
\end{proposition}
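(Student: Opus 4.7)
For part (1), I plan to compute $\kernel(\sspaceset{S})$ explicitly. For each $\corb\in\corbsetperpoints$ with $\orb\subset S$, Proposition~\ref{prop:finite_orbit_ideal}.(2) gives $\bigcap_{\lambda\in\T}\prim_{\corb,\lambda}=\{\loneelement{n}\in\lone : f_n\rest{\corb}=0\text{ for all }n\in\Z\}$, and for each $\corb\in\corbsetaperpoints$ with $\orb\subset S$, Proposition~\ref{prop:infinite_orbit_ideal} gives the same formula for $\prim_{\corb}$. Intersecting over all such $\corb$, the element $\loneelement{n}$ lies in $\kernel(\sspaceset{S})$ iff every $f_n$ vanishes on the union of these orbit closures. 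Since $S$ is invariant, this union contains $S$ (each $\pt\in S$ lies in $\overline{\Z\cdot\pt}$) and is contained in $\overline{S}$, so by continuity of the $f_n$ the condition is equivalent to $f_n\rest{\overline{S}}=0$ for all $n$; hence $\kernel(\sspaceset{S})=\Kernel(\overline{S})$. Applying Lemma~\ref{lem:closure} with the invariant set $\overline{S}$, a primitive ideal $\prim\in\sspaceset{\topspace}$ contains $\Kernel(\overline{S})$ precisely when its associated orbit closure lies in $\overline{S}$, equivalently when its orbit lies in $\overline{S}$; this set of primitive ideals is $\sspaceset{\overline{S}}$ by definition.

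For part (2)(a), the closure of $\sspaceset{S_1}$ in the subspace $\sspaceset{S_2}\subset\sspaceset{\topspace}$ equals its closure in $\sspaceset{\topspace}$ intersected with $\sspaceset{S_2}$, which by part (1) is $\sspaceset{\overline{S_1}}\cap\sspaceset{S_2}$. Since $S_2\subset\perpoints$, every orbit occurring here is finite, and a primitive ideal lies in both sets iff its orbit lies in $\overline{S_1}\cap S_2={\overline{S_1}}^{S_2}$; this is precisely $\sspaceset{{\overline{S_1}}^{S_2}}$.

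For part (2)(b), the `if' direction is immediate from (2)(a): if $S_1={\overline{S_1}}^{S_2}$, then $\sspaceset{S_1}=\sspaceset{{\overline{S_1}}^{S_2}}$ is its own closure. For the converse, suppose $\sspaceset{S_1}$ is closed in $\sspaceset{S_2}$, so that $\sspaceset{S_1}=\sspaceset{{\overline{S_1}}^{S_2}}$, and let $\pt\in{\overline{S_1}}^{S_2}$. Since $\pt\in S_2\subset\perpoints$, its orbit $\corb$ is finite, and $\prim_{\corb,1}\in\sspaceset{{\overline{S_1}}^{S_2}}=\sspaceset{S_1}$; thus there exist $\corb^\prime\in\corbsetperpoints$ with $\orb^\prime\subset S_1$ and $\mu\in\T$ such that $\prim_{\corb^\prime,\mu}=\prim_{\corb,1}$. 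The injectivity of $\Psi$ in Proposition~\ref{prop:psi_proposition}.(1) then forces $\corb^\prime=\corb$ and $\mu=1$, whence $\pt\in\corb=\corb^\prime\subset S_1$. The only substantive technical step is the kernel computation in part (1); the rest is essentially subspace-topology bookkeeping together with the injectivity of $\Psi$.
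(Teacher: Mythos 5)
Your proposal is correct and follows essentially the same route as the paper: computing $\kernel(\sspaceset{S})$ via Propositions~\ref{prop:finite_orbit_ideal} and~\ref{prop:infinite_orbit_ideal} to identify it with the vanishing ideal of $\overline{S}$, applying Lemma~\ref{lem:closure} to identify the hull, and deducing (2) by intersecting with $\sspaceset{S_2}$ and invoking the injectivity of $\Psi$ (equivalently, of $S\mapsto\sspaceset{S}$ on invariant subsets of $\perpoints$, via Lemma~\ref{lem:possible_inclusions}.(1)).
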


\begin{proof}
For the first statement, we note that it follows from the second part of Proposition~\ref{prop:finite_orbit_ideal}, Proposition~\ref{prop:infinite_orbit_ideal}, and continuity, that
\begin{align*}
\kernel(\sspaceset{S})&=\kernel\left(\left\{\prim_{\corb,\lambda} : \corb\in\corbsetperpoints, \lambda\in\T,\orb\subset S\right\}\cup\left\{P_{\corb} : \corb\in\corbsetaperpoints,\orb\subset S\right\}\right)\\
&=\left\{\loneelement{n}\in\lone : f_n\rest{\overline{\bigcup_{\orb\subset S}\corb}}=0 \textup{ for all } n\in\Z\right\}.
\end{align*}
But $\overline{\bigcup_{\orb\subset S}\corb}=\overline{S}$, and hence Lemma~\ref{lem:closure} shows that
\begin{align*}
\hull\kernel(\sspaceset{S})  \cap\sspaceset{\topspace} & = \left\{\prim_{\corb,\lambda} : \corb\in\corbsetperpoints ,\, \lambda \in\T,\corb\subset\overline{S}\right\}\\
&\quad\quad\cup\left\{\prim_{\corb}  : \corb\in\corbsetaperpoints,\,\corb\subset\overline{S}\right\}.
\end{align*}
Since $\overline{S}$ is closed, an orbit is contained in $\overline{S}$ precisely when its closure is contained in $\overline{S}$; hence the right hand side equals $\sspaceset{\overline{S}}$, as claimed.
\\ For the second part, we note that, if $S_1\subset S_2\subset\perpoints$ are two invariant subsets, then
\begin{align*}
\hull\kernel(\sspaceset{S_1})\cap\sspaceset{S_2}&=\hull\kernel(\sspaceset{S_1})\cap\sspaceset{\topspace}\cap\sspaceset{S_2}
\\&=\left\{\prim_{\corb,\lambda} : \corb\in\corbsetperpoints,\, \lambda\in\T,\,\corb\subset\overline{S_1}\right\}\cap\sspaceset{S_2}
\\&=\left\{\prim_{\corb,\lambda} : \corb\in\corbsetperpoints,\, \lambda\in\T,\,\orb\subset\overline{S_1}\right\}\\
& \quad\quad\cap \left\{\prim_{\corb,\lambda} : \corb\in\corbsetperpoints,\, \lambda\in\T,\,\orb\subset S_2\right\}
\\&=\left\{\prim_{\corb,\lambda} : \corb\in\corbsetperpoints,\, \lambda\in\T,\,\orb\subset\overline{S_1}\cap S_2\right\}\\&= \sspaceset{{\overline S_1}^{S_2}}.
\end{align*}
This proves the first statement in the second part. For the second statement we need then merely note that the map $S\mapsto\sspaceset{S}$ is injective on the collection of invariant subsets of $\perpoints$, as a direct consequence of the first part of Lemma~\ref{lem:possible_inclusions}. (Note, for the sake of completeness, that\textemdash as irrational rotations of $\T$ show\textemdash this is not generally true for the collection of invariant subsets of $\aperpoints$.)
\end{proof}

\begin{corollary}\label{cor:perpoints_closed}
Let $S\subset\perpoints$ be invariant.
Then the following are equivalent:
\begin{enumerate}
\item $\sspaceset{S}$ is closed in $\sspaceset{\topspace}$;
\item $S$ is closed in $\topspace$.
\end{enumerate}
\end{corollary}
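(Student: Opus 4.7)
The plan is to deduce this from the two parts of Proposition~\ref{prop:closure} together with the injectivity of the map $T \mapsto \sspaceset{T}$ on invariant subsets of $\perpoints$ that is noted at the end of the proof of Proposition~\ref{prop:closure}(2).

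First I would handle the easy direction: if $S$ is closed in $\topspace$, then $\overline{S}=S$, and by Proposition~\ref{prop:closure}(1) the closure of $\sspaceset{S}$ in $\sspaceset{\topspace}$ is $\sspaceset{\overline{S}}=\sspaceset{S}$, so $\sspaceset{S}$ is closed.

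For the converse, suppose $\sspaceset{S}$ is closed in $\sspaceset{\topspace}$. By Proposition~\ref{prop:closure}(1) again, this gives the key equality
\[
\sspaceset{S}=\sspaceset{\overline{S}},
\]
where $\overline{S}$ is the closure of $S$ in $\topspace$ (which is invariant since $S$ is). Since $S\subset\perpoints$, every orbit contained in $S$ is finite, so the left hand side lies entirely in $\sspaceset{\perpoints}$. Consequently $\sspaceset{\overline{S}}\subset\sspaceset{\perpoints}$, which\textemdash in view of the injectivity of $\Psi$ from Proposition~\ref{prop:psi_proposition}(1) and the disjointness $\sspaceset{\topspace}=\sspaceset{\perpoints}\sqcup\sspaceset{\aperpoints}$\textemdash forces $\overline{S}$ to contain no aperiodic orbits. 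Hence $\overline{S}\subset\perpoints$.

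At this point both $S$ and $\overline{S}$ are invariant subsets of $\perpoints$, and the relation $\sspaceset{S}=\sspaceset{\overline{S}}$ forces $S=\overline{S}$ by the injectivity of $T\mapsto\sspaceset{T}$ on invariant subsets of $\perpoints$ that was recorded in the proof of Proposition~\ref{prop:closure}(2)(b); that is, $S$ is closed in $\topspace$. The main subtlety\textemdash one cannot simply quote Proposition~\ref{prop:closure}(2)(b) with $S_2=\perpoints$, because that yields only closedness of $S$ in $\perpoints$\textemdash is precisely the step of ruling out aperiodic orbits in $\overline{S}$, and it is handled by comparing the finite/infinite orbit components of $\sspaceset{S}$ and $\sspaceset{\overline{S}}$ before invoking injectivity.
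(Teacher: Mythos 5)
Your proof is correct and rests on the same ingredients as the paper's: Proposition~\ref{prop:closure}(1) for the easy direction and for the identification of the closure, and the injectivity statements flowing from Lemma~\ref{lem:possible_inclusions}(1) and Proposition~\ref{prop:psi_proposition}(1) for the converse. The paper organizes the converse as an orbit-by-orbit contradiction (an orbit $\orb\subset\overline{S}$ with $\orb\not\subset S$, treated separately in the finite and infinite cases, gives a primitive ideal in the closure of $\sspaceset{S}$ but not in $\sspaceset{S}$), whereas you phrase the same argument at the level of the set equality $\sspaceset{S}=\sspaceset{\overline{S}}$; this is only a difference of packaging, and your remark that Proposition~\ref{prop:closure}(2)(b) alone would not suffice is well taken.
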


\begin{proof}
Assume that (1) holds. If $S$ is not closed in $\topspace$, then there is an orbit $\orb$ in $\overline{S}$ such that $\orb\not\subset S$. If this is an infinite orbit, then trivially $\prim_{\corb}\notin\sspaceset{S}$. The first part of Proposition~\ref{prop:closure}, however, implies that $\prim_{\corb}$ is in the closure of $\sspaceset{S}$ in $\sspaceset{\topspace}$. This contradicts the fact that $\sspaceset{S}$ is closed in $\sspaceset{\topspace}$. If $\orb$ is a finite orbit, then $\corb=\orb\not\subset S$, and hence, by the first part of Lemma~\ref{lem:possible_inclusions}, $\prim_{\corb,\lambda}\notin\sspaceset{S}$  for all $\lambda\in\T$. The first part of Proposition~\ref{prop:closure} implies again that $\prim_{\corb,\lambda}$ is in the closure of $\sspaceset{S}$ in $\sspaceset{\topspace}$, which is again a contradiction. Hence $S$ must be closed in $S$, and (1) implies (2). It is immediate from the first part of Proposition~\ref{prop:closure} that (2) implies (1).
\end{proof}

We can now establish the following two additions to the results listed in Theorem~\ref{thm:basic_topology}.

\begin{proposition}\label{prop:topological_complements}\quad
\begin{enumerate}
 \item The following are equivalent:
 \begin{enumerate}
 \item $\sspaceset{\perpoints}$ is closed in $\sspaceset{\topspace}$;
 \item $\perpoints$ is closed in $\topspace$.
 \end{enumerate}
 \item The following are equivalent:
 \begin{enumerate}
 \item $\sspaceset{\aperpoints}$ is closed in $\sspaceset{\topspace}$;
 \item $\aperpoints$ is closed in $\topspace$.
 \end{enumerate}
\end{enumerate}
\end{proposition}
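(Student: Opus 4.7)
My plan is to reduce part~(1) to Corollary~\ref{cor:perpoints_closed} and to handle part~(2) by combining Proposition~\ref{prop:closure}.(1) with a direct inspection of the two candidate sets of primitive ideals.

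For part~(1), observe that $\perpoints$ is an invariant subset of $\perpoints$, so Corollary~\ref{cor:perpoints_closed} applies verbatim with $S=\perpoints$ and yields the equivalence. Nothing further is required here.

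For part~(2), I would first invoke Proposition~\ref{prop:closure}.(1) to compute the closure of $\sspaceset{\aperpoints}$ in $\sspaceset{\topspace}$ as $\sspaceset{\overline{\aperpoints}}$. Thus (a) holds if and only if $\sspaceset{\aperpoints}=\sspaceset{\overline{\aperpoints}}$. The implication (b)$\Rightarrow$(a) is then immediate: when $\aperpoints$ is closed, $\overline{\aperpoints}=\aperpoints$ and the two sets coincide. For (a)$\Rightarrow$(b) I would argue by contraposition. Suppose $\aperpoints$ is not closed in $\topspace$; then there exists $\pt\in\overline{\aperpoints}\setminus\aperpoints$, and this $\pt$ is necessarily periodic. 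Since the closure of an invariant set is invariant (recalled in Section~\ref{sec:preliminaries}), the finite orbit $\orb$ of $\pt$ is contained in $\overline{\aperpoints}$, so $\corb=\orb\in\corbsetperpoints$ satisfies $\orb\subset\overline{\aperpoints}$. Hence for any $\lambda\in\T$ the primitive ideal $\prim_{\corb,\lambda}$ lies in $\sspaceset{\overline{\aperpoints}}$, while it does not lie in $\sspaceset{\aperpoints}$ because $\orb$ consists of periodic points and so is not contained in $\aperpoints$; injectivity of $\Psi$ from Proposition~\ref{prop:psi_proposition}.(1) ensures that $\prim_{\corb,\lambda}$ is genuinely distinct from any $\prim_{\corb'}$ with $\corb'\in\corbsetaperpoints$. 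Consequently $\sspaceset{\aperpoints}\subsetneq\sspaceset{\overline{\aperpoints}}$, so $\sspaceset{\aperpoints}$ is not closed in $\sspaceset{\topspace}$.

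The only small subtlety, which I would mention explicitly for clarity, is that the aperiodic parts of $\sspaceset{\aperpoints}$ and $\sspaceset{\overline{\aperpoints}}$ automatically coincide (every aperiodic orbit is contained in $\aperpoints$, hence in both sets), so the comparison of these two sets reduces to the presence or absence of periodic orbits inside $\overline{\aperpoints}$. With that remark in place, the argument is essentially a bookkeeping exercise, and I do not anticipate any serious obstacle beyond the invocation of Proposition~\ref{prop:closure}.(1).
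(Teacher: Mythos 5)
Your proof is correct and follows essentially the same route as the paper: part (1) is the special case $S=\perpoints$ of Corollary~\ref{cor:perpoints_closed}, and part (2) uses Proposition~\ref{prop:closure}.(1) together with a finite orbit in $\perpoints\cap\overline{\aperpoints}$ to witness non-closedness. Your extra remark invoking the injectivity of $\Psi$ to confirm that $\prim_{\corb,\lambda}\notin\sspaceset{\aperpoints}$ is a slightly more careful justification of what the paper calls trivial, but the argument is the same.
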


\begin{proof}
 Part (1) is a special case of Corollary~\ref{cor:perpoints_closed}.
 For part (2), it is clear from the first part of Proposition~\ref{prop:closure} that (b) implies (a). If (a) holds, but $\aperpoints$ is not closed in $\topspace$, choose a finite orbit $\corb$ in  $\perpoints\cap\overline{\aperpoints}$. Trivially, for $\lambda\in\T$, $\prim_{\corb,\lambda}\notin\sspaceset{\aperpoints}$, but the first part of Proposition~\ref{prop:closure} shows that, for $\lambda\in\T$, $\prim_{\corb,
 \lambda}$ is in the closure of $\sspaceset{\aperpoints}$ in $\sspaceset{\topspace}$  . This contradicts the fact that $\sspaceset{\aperpoints}$ is closed in $\sspaceset{\topspace}$.
\end{proof}

If $S\subset \perpoints$ is invariant, we let $\orbspace{S}$ be the associated orbit space, supplied with the quotient topology. For the remainder of this section, we shall concentrate, for suitable invariant subsets $S$ of $\perpoints$, on describing the topology of $\sspaceset{S}$ in terms of the topological product $\orbspace{S}\times\T$ (see Theorem~\ref{thm:main_homeomorphism}). If $\orb=\corb\subset S$ is an orbit, we shall write $\corb$ for the subset of $S$ as well as for the corresponding element of $\orbspace{S}$.

The following result on a restriction of the inverse of $\Psi$ in Proposition~\ref{prop:psi_proposition} (that can be defined on the whole of $\sspaceset{\topspace}$) relies on a non-trivial result in Fourier analysis.

\begin{lemma}\label{lem:continuity} Let $p\geq 1$ be an integer, and suppose that $S\subset\pperpoints$ is invariant.  Then the restricted inverse map $\Psi^{-1}:\sspaceset{S}\to\orbspace{S}\times\T$, sending $\prim_{\corb,\lambda}\in\sspaceset{S}$ to $(\corb,\lambda)\in\orbspace{S}\times\T$, is a continuous bijection.
\end{lemma}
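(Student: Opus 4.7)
My plan is to factor $\Psi^{-1}$ through its two component maps, $\phi_1 : \sspaceset{S} \to \orbspace{S}$ sending $\prim_{\corb,\lambda} \mapsto \corb$, and $\phi_2 : \sspaceset{S} \to \T$ sending $\prim_{\corb,\lambda} \mapsto \lambda$, and to verify continuity of each separately. Bijectivity of $\Psi^{-1}$ is immediate from the injectivity part of Proposition~\ref{prop:psi_proposition} together with the definition of $\sspaceset{S}$.

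For $\phi_1$, the closed subsets of $\orbspace{S}$ correspond, under the quotient map $S \to \orbspace{S}$, to the invariant subsets $T$ of $S$ that are closed in $S$; the preimage under $\phi_1$ of such a set is precisely $\sspaceset{T}$. This is closed in $\sspaceset{S}$ by part (2)(b) of Proposition~\ref{prop:closure}, so $\phi_1$ is continuous.

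The continuity of $\phi_2$ is where the work lies, and is the only step I expect to require more than routine bookkeeping. Fix a closed $V \subsetneq \T$ and a point $\lambda_0 \in \T \setminus V$. By the definition of the hull-kernel topology, showing $\phi_2^{-1}(V)$ to be closed in $\sspaceset{S}$ amounts to producing, for each $\corb_0 \in \orbspace{S}$, an element $a \in \lone$ satisfying $a \in \prim_{\corb,\lambda}$ for every $\corb \in \orbspace{S}$ and every $\lambda \in V$, while $a \notin \prim_{\corb_0,\lambda_0}$. My candidate is
\[
a := \suml c_l \delta^{lp},
\]
where $(c_l)_l \in \ell^1(\Z)$ is chosen so that its Fourier series $\phi(\lambda) := \suml c_l \lambda^l$ vanishes identically on $V$ but satisfies $\phi(\lambda_0) \neq 0$. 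The first part of Proposition~\ref{prop:finite_orbit_ideal} then reduces, for any orbit $\corb$ of period $p$ and any $\lambda \in \T$, the condition $a \in \prim_{\corb,\lambda}$ to the single equation $\phi(\lambda) = 0$; both requirements on $a$ follow, and neither depends on the particular $\corb_0$.

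The residual issue\textemdash the existence of such $(c_l)_l$\textemdash is the non-trivial Fourier-analytic ingredient announced in the statement: it is the regularity of the Wiener algebra $A(\T) \cong \ell^1(\Z)$, i.e.\ the fact that an arbitrary closed subset of $\T$ can be separated from an external point by a function with absolutely convergent Fourier series. A short derivation uses the inclusion $C^1(\T) \subset A(\T)$ combined with a $C^1$ bump function supported in a small open neighborhood of $\lambda_0$ disjoint from $V$.
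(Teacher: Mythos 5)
Your proposal is correct and follows essentially the same route as the paper: continuity of the first coordinate via Proposition~\ref{prop:closure}(2)(b), and continuity of the second coordinate by exhibiting, for each $\lambda_0\notin V$, an element of $\kernel\bigl(\phi_2^{-1}(V)\bigr)$ outside every $\prim_{\corb,\lambda_0}$, built from a sequence in $\ell^1(\Z)$ whose Fourier transform vanishes on $V$ but not at $\lambda_0$ (the regularity of $\ell^1(\Z)$, which the paper cites rather than rederives). Your witness $a=\suml c_l\delta^{lp}$ differs immaterially from the paper's choice $a_{lp+j}=c_l$ for all $j=0,\ldots,p-1$; both reduce membership in $\prim_{\corb,\lambda}$ to the single condition $\suml c_l\lambda^l=0$ via Proposition~\ref{prop:finite_orbit_ideal}.
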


\begin{proof}
It is clear that the map is a bijection, and it remains to show that it is continuous.
Let $q_1,q_2$ be the canonical projections from $\orbspace{S}\times\T$ onto the first and second factor, respectively. We are to show that $q_1\circ\Psi^{-1}$ and $q_2\circ\Psi^{-1}$ are continuous.

If $F\subset\orbspace{S}$ is closed, then there exists an invariant $S_F\subset S$ that is closed in $S$, and such that $F=\{\corb: \corb\subset S_F\}$.  Hence
\[
(q_1\circ\Psi^{-1})^{-1}(F)=\left\{\prim_{\corb,\lambda} : \lambda\in\T,\,\corb\subset S\right\}
=\sspaceset{S_F}.
\]
Since $S_F$ is closed in $S$, part (2)(b) of Proposition~\ref{prop:closure} implies that $\sspaceset{S_F}$ is closed in $\sspaceset{S}$. Hence $q_1\circ\Psi^{-1}$ is continuous.

If $F^\prime\subset\T$, then $\left(q_2\circ\Psi^{-1}\right)^{-1}(F^\prime)=\{\prim_{\corb,\lambda} :  \lambda\in F^\prime,\, \corb\subset S\}$. If $F^\prime=\emptyset$ or $F^\prime=\T$, then this equals $\emptyset$ or $\sspaceset{S}$, respectively;  hence it is closed in $\sspaceset{S}$. If $F^\prime\subset\T$ is a non-trivial closed subset, we argue as follows. Proposition~\ref{prop:finite_orbit_ideal} shows that
\begin{align*}
&\kernel\left(\{\prim_{\corb,\lambda} : \lambda\in F^\prime,\,\corb\subset S\}\right)=\\
&=\left\{\loneelement{n}\in\lone:   \suml\lambda^l f_{lp + j}(\pt) = 0\!\textup{ for }j=0,\ldots,p-1, \pt\in S,\!\textup{ and }\!\lambda\in F^\prime\right\}.
\end{align*}
Suppose that $\lambda_0\notin F^\prime$. Then, as a consequence of \cite[Theorem~7.1.2.(iii)]{L}, the regularity of the Banach algebra $\ell^1(\Z)$, and the well known fact that the maximal ideal space of $\ell^1(\Z)$ is $\T$ with the usual topology, there exists $(c_n)\in\ell^1(\Z)$ such that $\sumn\lambda^n c_n=0$ for all $\lambda\in F^\prime$ and $\sumn\lambda_0^n c_n=1$. Let $a_{lp+j}=c_l$ for $j=0,\ldots,p-1$ and $l\in\Z$, and put $a=\sumn a_n \delta^n\in\lone$. Then $a\in \kernel\left(\{\prim_{\corb,\lambda} : \lambda\in F^\prime,\, \corb\subset S \}\right)$, but $a\notin P_{\corb,\lambda_0}$ for all $\corb\subset\pperpoints$, and in particular for all $\corb\subset S$. Hence $\kernel\left(\{\prim_{\corb,\lambda} :  \lambda\in F^\prime,\,\corb\subset S\}\right)\not\subset P_{\corb,\lambda_0}$ for all $\corb\subset S$, i.e.\  $\prim_{\corb,\lambda_0}\notin\hull\kernel\left(\{\prim_{\corb,\lambda} : \lambda\in F^\prime,\,\corb\subset S\}\right)$ for all $\corb\subset S$. Since $\lambda_0\notin F^\prime$ was arbitrary, we conclude that $\hull\kernel(\left(q_2\circ\Psi^{-1}\right)^{-1}(F^\prime))\cap\sspaceset{S}=\left(q_2\circ\Psi^{-1}\right)^{-1}(F^\prime)$, i.e.\  that $\left(q_2\circ\Psi^{-1}\right)^{-1}(F^\prime)$\ is closed in $\sspaceset{S}$. Hence $q_2\circ\Psi^{-1}$ is continuous.
\end{proof}

\begin{corollary}\label{cor:basic_homeomorphism} Let $p\geq 1$ be an integer. Suppose that $S\subset\pperpoints$ is invariant and that $\sspaceset{S}$ is compact. Then the restricted map $\Psi^{-1}:\sspaceset{S}\to\orbspace{S}\times\T$, sending $\prim_{\corb,\lambda}\in\sspaceset{S}$ to $(\corb,\lambda)\in\orbspace{S}\times\T$, is a homeomorphism between compact Hausdorff spaces.
\end{corollary}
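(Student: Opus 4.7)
The plan is to use Lemma~\ref{lem:continuity} together with the standard fact that a continuous bijection from a compact space onto a Hausdorff space is automatically a homeomorphism.

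First, Lemma~\ref{lem:continuity} already provides $\Psi^{-1}\colon\sspaceset{S}\to\orbspace{S}\times\T$ as a continuous bijection, and the domain $\sspaceset{S}$ is compact by hypothesis. Hence $\orbspace{S}\times\T$ is compact as a continuous image of a compact space. What remains to verify is Hausdorffness of the target; once this is in place, the closed-map argument (images of closed, hence compact, subsets of $\sspaceset{S}$ are compact, hence closed in the Hausdorff target) will finish the proof.

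The step that requires some care is showing that $\orbspace{S}$ is Hausdorff. Since $S\subset\pperpoints$, the $\Z$-action on $S$ factors through the finite cyclic group $\Z/p\Z$, acting by homeomorphisms on the Hausdorff space $S$ (which inherits its topology from $\topspace$). Given two distinct orbits $\orb_1,\orb_2\subset S$, each is a finite, hence closed, subset of $S$, and they are disjoint. Using Hausdorffness of $S$ and the finiteness of the orbits, I would choose disjoint open sets $U_1, U_2\subset S$ with $\orb_i\subset U_i$. Then
\[
V_i=\bigcap_{k=0}^{p-1}\homeo^k(U_i)\quad(i=1,2)
\]
are open, $\Z$-invariant, disjoint (as $V_i\subset U_i$), and contain $\orb_i$. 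Their images in the quotient $\orbspace{S}$ are disjoint open neighbourhoods of the two points in question, so $\orbspace{S}$ is Hausdorff. Since $\T$ is Hausdorff, so is the product $\orbspace{S}\times\T$.

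Putting the pieces together: $\Psi^{-1}$ is a continuous bijection from a compact space onto a Hausdorff space, hence a homeomorphism; and both spaces are therefore compact Hausdorff. The main (and only nontrivial) obstacle is the Hausdorff property of the orbit space $\orbspace{S}$, handled by the finite-group-action argument above; everything else is a direct appeal to Lemma~\ref{lem:continuity} and elementary point-set topology.
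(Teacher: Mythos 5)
Your proposal is correct and follows essentially the same route as the paper: Hausdorffness of $\orbspace{S}$ is obtained by passing from disjoint open sets around the two (finite, disjoint) orbits to disjoint invariant open sets via the intersection $\bigcap_{k=0}^{p-1}\homeo^k(U_i)$, and then the conclusion follows from Lemma~\ref{lem:continuity} and the standard fact that a continuous bijection from a compact space to a Hausdorff space is a homeomorphism. The only detail worth making explicit is that the quotient map is open and that the images of the disjoint \emph{invariant} (saturated) sets remain disjoint, both of which you implicitly use and which the paper also records.
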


\begin{proof}
The space $\orbspace{S}$ is Hausdorff. Indeed, if $\orb_1,\orb_2$ are two different orbits in $S$, then there exist disjoint (relatively) open subsets $U_1$ and $U_2$ of $S$ such that $U_1\supset \orb_1$ and $U_2\supset\orb_2$. Then $\bigcap_{n\in\Z}\homeo^n(U_1)=\bigcap_{n=0}^{p-1}\homeo^n(U_1)$ and $\bigcap_{n\in\Z}\homeo^n(U_2)=\bigcap_{n=0}^{p-1}\homeo^n(U_2)$ are two disjoint invariant open subsets of $S$ separating $\orb_1$ and $\orb_2$. The images under the (open) quotient map then provide two open disjoint subsets of $\orbspace{S}$ separating $\orb_1$ and $\orb_2$, as required.

By Lemma~\ref{lem:continuity}, we see that $\Psi^{-1}:\sspaceset{S}\to\orbspace{S}\times\T$ is a continuous bijection between a compact space and a Hausdorff space. Hence it is a homeomorphism.
\end{proof}

We can now describe the topology of $\sspaceset{S}$ for certain invariant $S\subset\perpoints$.

\begin{theorem}\label{thm:technical_main_theorem}
Suppose that $S_1,\ldots,S_n\subset\perpoints$ are mutually disjoint invariant subsets, such that  $S_i\subset\per_{p_i}(\homeo)$ for not necessarily different integers $p_1,\ldots,p_n\geq 1$. Let $S=\bigcup_{i=1}^n S_i$, so that $\sspaceset{S}=\bigcup_{i=1}^n \sspaceset{S_i}$ as a disjoint union of sets. If each $S_i$ is closed in $S$, and $\sspaceset{S}$ is compact, then the map $\Psi: \bigsqcup_{i=1}^n \left(\orbspace{S_i}\times \T\right)\to\sspaceset{S}$, sending $(\corb,\lambda)$ to $P_{\corb,\lambda}$, is a homeomorphism between compact Hausdorff spaces. Here  $\bigsqcup_{i=1}^n \left(\orbspace{S_i}\times \T\right)$ is the disjoint union of the topological spaces $\orbspace{S_i}\times \T$.
\end{theorem}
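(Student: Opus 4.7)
The plan is to reduce the theorem to Corollary~\ref{cor:basic_homeomorphism} applied to each $S_i$ individually, then assemble the pieces using the topological-disjoint-union result Lemma~\ref{lem:topology}(1). The strategy is that each $\sspaceset{S_i}$ sits inside $\sspaceset{S}$ as a closed subset whose induced topology is that of a compact Hausdorff space, so that $\sspaceset{S}$ is literally the topological disjoint union $\bigsqcup_{i=1}^n \sspaceset{S_i}$. On each summand, $\Psi$ restricts to the inverse of the homeomorphism supplied by Corollary~\ref{cor:basic_homeomorphism}, and so the global $\Psi$ is then a homeomorphism by the universal property of disjoint unions.

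First I would verify the hypotheses of Lemma~\ref{lem:topology}(1) for the decomposition $\sspaceset{S}=\bigcup_{i=1}^n \sspaceset{S_i}$. Pairwise disjointness follows from Lemma~\ref{lem:possible_inclusions}(1): an equality $\prim_{\corb_1,\lambda_1}=\prim_{\corb_2,\lambda_2}$ with $\orb_1\subset S_i$ and $\orb_2\subset S_j$ forces $\corb_1=\corb_2$, but the finite orbits coincide with their closures and the $S_i$ are pairwise disjoint, so $i=j$. Closedness of each $\sspaceset{S_i}$ in $\sspaceset{S}$ follows from Proposition~\ref{prop:closure}(2)(b) applied to the invariant pair $S_i\subset S\subset\perpoints$, using the hypothesis that each $S_i$ is closed in $S$. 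Compactness of each $\sspaceset{S_i}$ is then automatic, as it is a closed subset of the compact space $\sspaceset{S}$.

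Next I would apply Corollary~\ref{cor:basic_homeomorphism} to each $S_i$ with $p=p_i$: since $\sspaceset{S_i}$ is compact (just established) and $S_i\subset\per_{p_i}(\homeo)$ is invariant, the corollary yields a homeomorphism $\sspaceset{S_i}\to\orbspace{S_i}\times\T$ between compact Hausdorff spaces whose inverse is precisely the restriction of $\Psi$. In particular each $\sspaceset{S_i}$ is Hausdorff in its induced topology, completing the hypotheses of Lemma~\ref{lem:topology}(1). Applying that lemma identifies $\sspaceset{S}$ with the topological disjoint union $\bigsqcup_{i=1}^n \sspaceset{S_i}$ and confirms it is a compact Hausdorff space. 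Combining the piecewise homeomorphisms via Lemma~\ref{lem:topology}(2)\textemdash or, more directly, via the universal property of the disjoint-union topology\textemdash yields the desired global homeomorphism $\Psi:\bigsqcup_{i=1}^n(\orbspace{S_i}\times\T)\to\sspaceset{S}$.

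The one place where something substantive is happening is the bootstrapping inside the second step: Corollary~\ref{cor:basic_homeomorphism} presupposes compactness of the local structure space, which is exactly why compactness of $\sspaceset{S}$ (rather than, say, of $S$) enters essentially as a hypothesis; everything else is bookkeeping for the disjoint-union description. I would expect this compactness bootstrap to be the only point requiring care, with the rest of the argument being a mechanical application of the already-established tools.
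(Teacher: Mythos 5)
Your proposal is correct and follows essentially the same route as the paper's own proof: closedness of each $\sspaceset{S_i}$ in $\sspaceset{S}$ via Proposition~\ref{prop:closure}(2)(b), compactness by restriction, the homeomorphism $\sspaceset{S_i}\cong\orbspace{S_i}\times\T$ from Corollary~\ref{cor:basic_homeomorphism}, and assembly by Lemma~\ref{lem:topology}(1). The only addition is your explicit check of pairwise disjointness of the $\sspaceset{S_i}$ via Lemma~\ref{lem:possible_inclusions}(1), which the paper builds into the statement of the theorem itself; this is a harmless and slightly more careful touch.
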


\begin{proof}
Since each $S_i$ is closed in $S$, part (2)(b) of Proposition~\ref{prop:closure} shows that each $\sspaceset{S_i}$ is closed in $\sspaceset{S}$. Since the latter space is compact by assumption, each $\sspaceset{S_i}$ is compact. Hence Corollary~\ref{cor:basic_homeomorphism} applies, and it  shows that each $\sspaceset{S_i}$ is a compact Hausdorff space. We are now in the situation of part (1) of Lemma~\ref{lem:topology}, where $\sspaceset{S}=\bigcup_{i=1}^n \sspaceset{S_i}$ is the finite disjoint union of the subsets $\sspaceset{S_i}$ of $\sspaceset{S}$, and each $\sspaceset{S_i}$ is a closed subset of $\sspaceset{S}$ that is a compact Hausdorff space in the induced topology. Hence $\sspaceset{S}$ is a compact Hausdorff space, and it is the disjoint union of the topological spaces $\sspaceset{S_i}$. Since Corollary~\ref{cor:basic_homeomorphism} shows that each $\sspaceset{S_i}$ is homeomorphic to $\orbspace{S_i}\times \T$, the proof is complete.
\end{proof}

An application of the second and then the third part of Lemma~\ref{lem:topology} shows that $\sspaceset{S}$ is homeomorphic to $\left(\orbspace{\bigsqcup_{i=1}^n S_i\right)}\times\T$, where $\bigsqcup_{i=1}^n S_i$ is the topological disjoint union of the $S_i$. This will be used in the proof of the following main result on the topological structure of a part of $\sspace$.

\begin{theorem}\label{thm:main_homeomorphism}
Suppose that $\sspaceset{\topspace}$ is compact; this is certainly the case if $\topspace$ is metrizable or $\topspace=\perpoints$.  Furthermore, assume that $S_1,\ldots,S_n$ are mutually disjoint invariant closed subsets of $\topspace$ such that $S_i\subset\per_{p_i}(\homeo)$ for not necessarily different integers $p_1,\ldots,p_n\geq 1$. Let $S=\bigcup_{i=1}^n S_i$.
 Then the map $\Psi: \orbspace{S}\times \T\to\sspaceset{S}$, sending $(\corb,\lambda)$ to $P_{\corb,\lambda}$, is a homeomorphism between compact Hausdorff spaces. As a topological space, the codomain is also homeomorphic to the disjoint union of the topological spaces $\sspaceset{S_i}$, and each such space is homeomorphic to $\orbspace{S_i}\times\T$.
\end{theorem}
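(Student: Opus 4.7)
The plan is to reduce the statement directly to Theorem~\ref{thm:technical_main_theorem}, after first verifying that $\sspaceset{S}$ is compact and then carrying out the topological identification mentioned in the paragraph between Theorem~\ref{thm:technical_main_theorem} and the statement we want to prove. The nontrivial analytic content has already been absorbed into Theorem~\ref{thm:technical_main_theorem} (and, through it, into the Fourier-analytic input of Lemma~\ref{lem:continuity}), so the present proof is essentially bookkeeping.

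First I would dispose of the parenthetical remark on compactness: if $\topspace$ is metrizable or $\topspace=\perpoints$, then Theorem~\ref{thm:basic_topology}.(8)(a) gives $\sspaceset{\topspace}=\sspace$, and Theorem~\ref{thm:basic_topology}.(1) says $\sspace$ is always compact; hence $\sspaceset{\topspace}$ is compact in these cases. Next I would check that $\sspaceset{S}$ itself is compact. Since $S=\bigcup_{i=1}^n S_i$ is a finite union of closed subsets of $\topspace$, it is closed in $\topspace$; it is also invariant as a union of invariant sets, and it is contained in $\perpoints$. By Corollary~\ref{cor:perpoints_closed}, $\sspaceset{S}$ is closed in $\sspaceset{\topspace}$; since $\sspaceset{\topspace}$ is compact by hypothesis, $\sspaceset{S}$ is compact.

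With this in hand, the hypotheses of Theorem~\ref{thm:technical_main_theorem} are satisfied: the $S_i$ are mutually disjoint invariant subsets of $\perpoints$ with $S_i\subset\per_{p_i}(\homeo)$, each $S_i$ is closed in $\topspace$ and therefore closed in $S$, and $\sspaceset{S}$ is compact. That theorem then provides a homeomorphism $\bigsqcup_{i=1}^n \left(\orbspace{S_i}\times\T\right) \to \sspaceset{S}$, $(\corb,\lambda)\mapsto\prim_{\corb,\lambda}$, between compact Hausdorff spaces, where the domain carries the topological disjoint union topology. This already establishes the last sentence of the theorem: each $\sspaceset{S_i}$ is homeomorphic to $\orbspace{S_i}\times\T$ (by Corollary~\ref{cor:basic_homeomorphism}, as used inside the proof of Theorem~\ref{thm:technical_main_theorem}), and $\sspaceset{S}$ is homeomorphic to the topological disjoint union of the $\sspaceset{S_i}$.

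It remains to rewrite the domain as $\orbspace{S}\times\T$. Applying Lemma~\ref{lem:topology}.(1) to $S$ with its subspace topology from $\topspace$ (each $S_i$ is closed in $S$ and is compact Hausdorff in its induced topology), $S$ is homeomorphic to the topological disjoint union $\bigsqcup_{i=1}^n S_i$. Since the $S_i$ are invariant and mutually disjoint, every orbit of $\homeo\rest{S}$ is entirely contained in a unique $S_i$, so Lemma~\ref{lem:topology}.(3) gives $\orbspace{S} \cong \bigsqcup_{i=1}^n \orbspace{S_i}$ as topological spaces, and then Lemma~\ref{lem:topology}.(2) yields $\orbspace{S}\times\T \cong \bigsqcup_{i=1}^n\left(\orbspace{S_i}\times\T\right)$. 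Composing this chain of homeomorphisms with the one from Theorem~\ref{thm:technical_main_theorem} produces the homeomorphism $\Psi$ in the required form, and the fact that $\Psi(\corb,\lambda)=\prim_{\corb,\lambda}$ is preserved throughout the identifications. The only thing one needs to be mildly careful about is the orbit-space identifications in the last paragraph, but they are routine once the disjoint-union statements in Lemma~\ref{lem:topology} are available.
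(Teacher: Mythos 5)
Your proposal is correct and follows essentially the same route as the paper: establish compactness of $\sspaceset{S}$ via Corollary~\ref{cor:perpoints_closed}, invoke Theorem~\ref{thm:technical_main_theorem}, and then use the three parts of Lemma~\ref{lem:topology} to identify $\bigsqcup_{i=1}^n\left(\orbspace{S_i}\times\T\right)$ with $\orbspace{S}\times\T$. The only cosmetic difference is the order in which the parts of Lemma~\ref{lem:topology} are applied, which does not affect the argument.
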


\begin{proof}
The sufficiency of the conditions in the first sentence for $\sspaceset{\topspace}$ to be compact follows from part (8) of Theorem~\ref{thm:basic_topology}. We turn to the remaining statements. Since the $S_i$ are now closed, $S$ is closed, and then Corollary~\ref{cor:perpoints_closed} shows that $\sspaceset{S}$ is closed in $\sspaceset{\topspace}$. The latter space is compact by assumption, so that $\sspaceset{S}$ is compact. Therefore Theorem~\ref{thm:technical_main_theorem} applies. Combining this with the remark following that theorem, we see that $\sspaceset{S}$ is homeomorphic to $\left(\orbspace{\bigsqcup_{i=1}^n S_i\right)}\times\T$, where $\bigsqcup_{i=1}^n S_i$ is the topological disjoint union of the $S_i$. An application of the first part of Lemma~\ref{lem:topology} shows that $\bigsqcup_{i=1}^n S_i$ is homeomorphic to $S$. This completes the proof.
\end{proof}

We conclude with two special cases in which there are homeomorphisms as in Theorem~\ref{thm:main_homeomorphism}. With Remark~\ref{rem:type_I} in mind, the first one can be regarded as an improved version (with topology added) of part of Theorem~\ref{thm:finite_set}.

\begin{corollary}
Suppose that $\topspace$ is a finite set. Then the structure space $\sspace$ of $\lone$ is homeomorphic to the topological disjoint union of copies of $\T$, one for each orbit in $\topspace$.
\end{corollary}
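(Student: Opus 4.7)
The plan is to apply Theorem~\ref{thm:main_homeomorphism} directly, with $S=\topspace$ partitioned by period, and then to observe that in this finite setting each orbit-space factor collapses to a finite discrete set.

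First I would verify the hypotheses. Since $\topspace$ is finite and compact Hausdorff, it carries the discrete topology and is therefore trivially metrizable. Applying pigeonhole to the iterates of $\homeo$ at any point shows that every orbit is finite, so $\topspace=\perpoints$; by part (3) of Theorem~\ref{thm:basic_topology}, we have $\sspace=\sspaceset{\perpoints}=\sspaceset{\topspace}$. To supply the partition required by Theorem~\ref{thm:main_homeomorphism}, let $p_1,\ldots,p_n$ be the finitely many distinct periods occurring in $\topspace$ and set $S_i=\topspace\cap\per_{p_i}(\homeo)$. The $S_i$ are mutually disjoint, invariant, and closed in $\topspace$ (since $\topspace$ is discrete), each contained in $\per_{p_i}(\homeo)$, and $\bigcup_{i=1}^n S_i=\topspace$.

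Theorem~\ref{thm:main_homeomorphism} then applies and yields that $\sspace=\sspaceset{\topspace}$ is homeomorphic, as a topological space, to the disjoint union $\bigsqcup_{i=1}^n \left(\orbspace{S_i}\times\T\right)$. To conclude, I would observe that, for each $i$, the finiteness of $S_i$ makes $\orbspace{S_i}$ a finite discrete space, so $\orbspace{S_i}\times\T$ is itself the topological disjoint union of copies of $\T$ indexed by the orbits contained in $S_i$. Taking the union over $i$ exhibits $\sspace$ as the topological disjoint union of copies of $\T$, one for each orbit in $\topspace$, as claimed.

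There is no real obstacle here: the substance is entirely in Theorem~\ref{thm:main_homeomorphism}, and the present corollary is its specialisation to the case in which each orbit-space factor degenerates to a finite discrete set and the product with $\T$ therefore breaks up as a coproduct of tori.
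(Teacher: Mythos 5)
Your proposal is correct and is exactly the argument the paper intends: the corollary is stated as an immediate specialisation of Theorem~\ref{thm:main_homeomorphism}, obtained by noting that a finite $\topspace$ is discrete and consists of periodic points, partitioning it by period, and observing that each finite discrete orbit space turns the product with $\T$ into a coproduct of tori. All hypothesis checks (compactness of $\sspaceset{\topspace}$ via $\topspace=\perpoints$, closedness and invariance of the $S_i$) are handled correctly.
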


\begin{corollary}\label{cor:all_same_order} Suppose that all orbits in $\topspace$ are of the same finite order. Then the structure space $\sspace$ of $\lone$ is homeomorphic to $\orbspace{\topspace}\times\T$.
\end{corollary}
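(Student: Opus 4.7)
The plan is to derive this as an immediate application of Theorem~\ref{thm:main_homeomorphism} with the single invariant subset $S_1 = \topspace$. The hypothesis says every orbit in $\topspace$ has the same finite cardinality, say $p \geq 1$, which means $\topspace = \per_p(\homeo)$. In particular, $\topspace = \perpoints$.

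First I would invoke Theorem~\ref{thm:basic_topology}(8), whose hypothesis is satisfied because $\topspace = \perpoints$: this gives $\sspaceset{\topspace} = \sspace$ and $\sspaceset{\topspace}$ is compact. This is what allows us to pass from a statement about the subset $\sspaceset{S}$ of $\sspaceset{\topspace}$ in Theorem~\ref{thm:main_homeomorphism} to a statement about the whole of $\sspace$.

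Next I would apply Theorem~\ref{thm:main_homeomorphism} in the degenerate case $n=1$, taking $S_1 = \topspace$. The subset $S_1$ is trivially invariant, closed in $\topspace$, and is contained in $\per_{p}(\homeo)$ by the hypothesis. Hence $S = S_1 = \topspace$, and the theorem yields that $\Psi:\orbspace{\topspace}\times\T \to \sspaceset{\topspace}$ sending $(\corb,\lambda)$ to $\prim_{\corb,\lambda}$ is a homeomorphism between compact Hausdorff spaces. Combining this with the identification $\sspaceset{\topspace}=\sspace$ from the previous paragraph completes the proof.

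There is no real obstacle here; the work has been done in the preceding theorems. The only thing to verify is the compatibility of the hypotheses, namely that the common finite orbit order hypothesis lets us choose $n=1$ and $S_1=\topspace$ in Theorem~\ref{thm:main_homeomorphism}, and that $\sspaceset{\topspace}$ can be replaced by $\sspace$ via Theorem~\ref{thm:basic_topology}(8).
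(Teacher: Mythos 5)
Your proposal is correct and is precisely the argument the paper intends: the corollary is presented as an immediate special case of Theorem~\ref{thm:main_homeomorphism} with $n=1$ and $S_1=\topspace=\per_p(\homeo)$, using $\topspace=\perpoints$ to get $\sspaceset{\topspace}=\sspace$ compact via Theorem~\ref{thm:basic_topology}(8). Nothing further is needed.
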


\begin{example}[Rotations of $\T$]\label{ex:rotations}
Let $\topspace=\T$ and let $\homeo$ be the rotation by $2\rep p/q$, where $p,q$ are integers such that $q\neq 0$ and having greatest common divisor equal to 1. Corollary~\ref{cor:all_same_order} shows that $\sspace$ is homeomorphic to $\orbspace{\T}\times \T$.  For each $z_0\in\T$, the orbit of $z_0$ consists of all $z\in\T$ such that $z^q=z_0^q$.  The latter implies that the map $z\mapsto z^q$ from $\T$ onto $\T$ induces a homeomorphism between $\orbspace{\T}$ and $\T$. We conclude that $\sspace$ is homeomorphic to $\T^2$.

For irrational rotations we had already seen in Remark~\ref{rem:type_I} that $\sspace=\{\{0\}\}.$
\end{example}

\begin{remark}\label{rem:kodaka}
We are not aware of results for the structure space of $\cstar$ that are the analogues of those for $\lone$ in the present section. The algebra $\lone$ is very concretely given, and this makes it more accessible to explicit computations than $\cstar$. For $\cstar$, one could conceivably use the generalized Fourier coefficients of its elements as substitutes for the coefficients of the elements of $\lone$ to work with. 

As first evidence that such an approach might be successful, we mention that, for rational rotations of $\T$, the structure space of $\cstar$ is known to be homeomorphic to $\T^2$, as would also follow from the $\cstar$-analogue of Corollary~\ref{cor:all_same_order}. Indeed, $\cstar$ is strongly Morita equivalent to $C(\T^2)$ (see \cite{Rieffel}), and therefore its structure space is homeomorphic to that of the latter algebra by \cite[Corollary~3.30]{RW}, i.e.\ to~$\T^2$.

Furthermore, the product of a space of orbits and $\T$ occurs in the following context. Let $p\geq 1$ be an integer, and let $\mathrm{Irr}_p(\dynsysshort)$ be the set of all irreducible unitary representations of $\cstar$ on a fixed Hilbert space of dimension $p$, supplied with the topology of pointwise strong convergence. Let $\widehat{A}_p(\dynsysshort)$ be the set of unitary equivalence classes of irreducible unitary representations of $\cstar$ of dimension $p$, supplied with the quotient topology originating from $\mathrm{Irr}_p(\dynsysshort)$. Then, as in Theorem~\ref{thm:description_finite_dimensional_representations}, there is natural bijection $\Xi_p$ between 
$\pperpoints/\Z\times\T$ and $\widehat{A}_p(\dynsysshort)$. According to \cite[Theorem~4.2.1]{T1}, this map is a homeomorphism. This result is in the same spirit as Theorem~\ref{thm:main_homeomorphism}, but it does not involve the hull-kernel topology of a part of the primitive ideal space as such. 

We leave the hull-kernel topology on the primitive ideal space of $\cstar$ for further research.
\end{remark}

\subsection*{Acknowledgements}
The authors are indebted to Kazunori Kodaka for pointing out the result on rotation algebras in Remark~\ref{rem:kodaka}, and to the referee for the very precise reading of the manuscript.

This work was partially supported by the Netherlands Organisation for Scientific Research (NWO).

\end{document}